\newif\ifdraft
\tikzset{every picture/.append style={remember picture},
na/.style={baseline=-.5ex}}
\theoremstyle{plain}
\newtheorem{theorem}{Theorem}[section]
\newtheorem{lemma}[theorem]{Lemma}
\newtheorem{corollary}[theorem]{Corollary}
\newtheorem{conjecture}[theorem]{Conjecture}
\newtheorem{definition}{Definition}[section]
\theoremstyle{remark}
\newtheorem{remark}{Remark}[section]
\newcommand{\nada}[1]   {}
\newcommand{\HL}{{\rm HL}}
\newcommand{\onesum} {-{}-}
\newcommand{\Stwo} {\mathfrak S}
\newcommand{\Sbb} {\mathbb S}
\newcommand{\SSS} {\mathcal S}
\newcommand{\sphere} {\Sbb^3}
\definecolor{mygray}{rgb}{0.92,0.92,0.92}
\newcommand{\Compl}[1]{\overline{\sphere\setminus #1}}
\newcommand{\op}[1]{\operatorname{#1}}
\newcommand{\draftMMM}[1]{\ifdraft{\color{blue}#1}\fi}
\newcommand{\draftGGG}[1]{\ifdraft{\color{red}#1}\fi}
\newcommand{\draftYYY}[1]{\ifdraft{\color{orange}#1}\fi}
\numberwithin{equation}{section}
\numberwithin{figure}{section}
\title{A table of $n$-component handlebody links of genus $n+1$ up to six crossings}
\author[G.\ Bellettini]{Giovanni Bellettini}
\address{Dipartimento di Ingegneria dell'Informazione e Scienze Matematiche, Universit\`a di Siena, 53100 Siena, Italy,
and International Centre for Theoretical Physics ICTP,
Mathematics Section, 34151 Trieste, Italy
}
\email{bellettini@diism.unisi.it}
\author[G.\ Paolini]{Giovanni Paolini}
\address{California Institute of Technology and Amazon Web Services, Pasadena CA, United States (work done while at University of Fribourg, Department of Mathematics, 1700 Fribourg, Switzerland)}
\email{paolini@caltech.edu}
\author[M.\ Paolini]{Maurizio Paolini}
\address{Dipartimento di Matematica e Fisica, Universit\`a Cattolica del Sacro Cuore, 25121 Brescia, Italy}
\email{maurizio.paolini@unicatt.it}
\author[Y.\ S.\ Wang]{Yi-Sheng Wang}
\address{National Center for Theoretical Sciences, Mathematics Division, Taipei 106, Taiwan}
\email{yisheng@ncts.ntu.edu.tw}
\date{\today}
\begin{document}

\thanks{}

\begin{abstract}
A handlebody link is a union of handlebodies of positive genus embedded
in $3$-space, which generalizes the notion of links in classical knot theory.
In this paper, we consider handlebody links
with    
one genus $2$ handlebody and $n-1$ solid tori, $n>1$. 
Our main result is the complete 
classification of such handlebody links with six crossings or less, up to ambient isotopy.
\end{abstract}

\maketitle

%
%
%

\section{Introduction}\label{sec:intro}
Knot tabulation has a long and rich history.
Some early work, motivated by Kelvin's
vortex theory, dates back to as early as the late 19th. 
Over the past decades,
more effort has been put into it by many 
physicists and mathematicians; all prime 
knots up to $16$ crossings are now classified \cite{HosThiWee:98}. 
In recent years
knot tabulation has been further 
generalized to other contexts. 
\cite{Mor:09a} and \cite{Mor:09b} tabulate all
prime theta curves and handcuff graphs up to seven crossings,
\cite{IshKisMorSuz:12} enumerates all irreducible
handlebody knots of genus $2$ up to six crossings, and
\cite{ChoNg:13} classifies all 
alternating Lengendrian knots 
up to seven crossings.

The aim of this paper is to 
extend the Ishii-Kishimoto-Moriuchi-Suzuki handlebody knot 
table \cite{IshKisMorSuz:12} 
to handlebody \emph{links}
with $n > 1$ components 
having total genus $n + 1$.
We call such a handlebody link
an $(n,1)$-handlebody link;
it consists of exactly one genus $2$ 
handlebody and $n-1$ solid tori.
The following theorems summarize the main results of the paper.
%
%
\nada{
In \cite{IshKisMorSuz:12} Ishii et al. recovered 
a table of all handlebody knots of genus two, up to mirror image,
having at most six crossings.
This is an important first step in the classification of  
handlebodies in the $3$-sphere $\sphere$, up to ambient isotopy, 
that generalizes the knot table
to the next simplest objects, 
where a knot is interpreted as a knotted solid torus 
in $\sphere$.


A natural further step is to consider non-connected
handlebodies having 
$n > 1$ connected components of positive genus
and total genus $g = n + 1$, the lowest possible total genus 
that does not lead to the classical knot
theory of links, considered as 
a finite number of solid 
tori linked together in $\sphere$; 
we shall call such linked 
handlebodies a $(n,1)$-handlebody link.


To obtain genus $g = n+1$ out of $n$ components we must have exactly one component, say $H$, of
genus $2$ and $n-1$ components, say $T_2, ..., T_n$, of genus $1$, i.e. $n-1$ (intertangled)
solid tori.
}
%
%
\begin{theorem}\label{teo:table}
Table \ref{tab:handlebodylinks} enumerates all   
{\it non-split}\footnote{A handlebody link $\HL$ is split if 
there is a $2$-sphere $\Stwo\subset\sphere$
with $\Stwo\cap\HL=\emptyset$ separating $\HL$ into two parts.}, 
{\it irreducible}\footnote{A handlebody link $\HL$ is reducible if there is a $2$-sphere $\Stwo$ in $\sphere$ with $\Stwo\cap\HL$ an incompressible disk in $\HL$.
}
$(n,1)$-handlebody links, up to ambient
isotopy and mirror image, by their minimal diagrams, up to six crossings. 
\end{theorem}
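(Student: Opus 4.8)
The plan is to reduce the classification to a finite diagrammatic enumeration. First I would represent each $(n,1)$-handlebody link by a \emph{spine}: a trivalent spatial graph $G\subset\sphere$ whose regular neighborhood recovers $\HL$, where the genus-$2$ component has spine a handcuff or a theta graph and each of the $n-1$ solid tori has spine a knotted circle. By the standard correspondence, two such links are ambient isotopic if and only if their spines are related by ambient isotopy together with the IH-move, so at the level of diagrams the equivalence is generated by the generalized Reidemeister moves for spatial trivalent graphs together with the IH-move; mirror image is the further operation of reversing all crossings. This turns the statement into the classification of such diagrams with at most six crossings.

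The second step is enumeration and pruning. Because the link is non-split, every solid torus must be linked with the rest of the diagram and therefore contributes crossings; for a six-crossing budget this bounds $n$, leaving only finitely many values to treat. For each such $n$ I would generate all candidate diagrams of the corresponding spine with at most six crossings — building on the Ishii--Kishimoto--Moriuchi--Suzuki genus-$2$ table \cite{IshKisMorSuz:12} for the distinguished component and on the classical knot/link tables for the tori, then inserting all admissible linkings — and immediately discard any diagram that is split or reducible, using the criteria in the footnotes (e.g.\ by exhibiting a separating or a compressing $2$-sphere $\Stwo$). The \emph{completeness} half then requires showing that every surviving candidate is equivalent, through an explicit sequence of generalized Reidemeister and IH-moves (or a normal-form/flype-type argument), to one of the entries of Table~\ref{tab:handlebodylinks}; this is where a move sequence must be supplied for each apparent duplicate.

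The third step is \emph{distinctness}: proving that the retained entries are pairwise inequivalent, including under mirror image. Here I would compute invariants of increasing strength — the fundamental group $\pi_1(\Compl{\HL})$ of the complement together with its peripheral structure, the associated abelian and Alexander-type invariants, and finite quandle or $G$-family colorings — and rely on an orientation-sensitive invariant to detect chirality and thereby separate a link from its mirror. The main obstacle I anticipate is precisely this lower bound: pairs whose coarse data (number of components, genera, linking pattern, abelianizations) coincide will have to be separated by finer, frequently computer-assisted invariants, and one must be certain the chosen family resolves \emph{every} collision in the table. Dually, in the completeness step one must guarantee the enumeration overlooked no diagram and genuinely collapses all redundancies, so that the upper and lower bounds meet exactly at the list displayed in Table~\ref{tab:handlebodylinks}.
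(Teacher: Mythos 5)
Your overall shape (spines plus generalized Reidemeister/IH moves, a finite diagrammatic enumeration, then invariants for distinctness) matches the paper's, but the step that actually carries the completeness half is missing. You propose to generate candidates by starting from the Ishii--Kishimoto--Moriuchi--Suzuki genus-$2$ table and the classical link tables and then ``inserting all admissible linkings''; this is not a finite, verifiable enumeration, and it is also the wrong decomposition: in most entries of Table \ref{tab:handlebodylinks} the genus-$2$ component is a \emph{trivial} handlebody knot and every solid torus is an unknot (see the ``components'' column of Table \ref{tab:uniqueness}), so the entire content of the link lies in the mutual linking, which your scheme leaves unconstrained and unbounded. The paper's mechanism is the observation that a minimal diagram of a non-split, irreducible $(n,1)$-handlebody link has underlying plane graph of edge-connectivity exactly $2$ or $3$: the $3$-connected diagrams are exhaustively generated by computer (plane graphs with two trivalent and at most six quadrivalent vertices, subject to the bigon restrictions of Fig.\ \ref{fig:loops_double_arcs}), and the $2$-connected ones are recovered as order-$2$ vertex connected sums of a spatial graph admitting a $3$-connected minimal diagram with at most four crossings and prime links with $4$-connected minimal diagrams (Lemmas \ref{lm:three_connected_IH_minimal} and \ref{lm:two_connected_IH_minimal}). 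Without this dichotomy, or an equally concrete substitute, you cannot certify that no diagram was overlooked, which you yourself identify as the main risk.

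On distinctness, the invariants you list first ($\pi_1$ of the complement with peripheral structure, Alexander-type invariants) cannot resolve the whole table: the pairs $(6_2,6_4)$ and $(6_{11},6_{14})$ have homeomorphic complements (Remark \ref{rem:pi_1_cannot_distinguish}), so any invariant of the exterior alone is blind to them. The paper distinguishes the bulk of the table by the Kitano--Suzuki counts $ks_{\mathsf{A}_4}$ and $ks_{\mathsf{A}_5}$, and then separates these two exceptional pairs by an argument of a different kind, operating on the handlebody link itself: delete a solid-torus component and compare the resulting handlebody knots or links. You flag the general danger of collisions, but a correct proof must recognize that for these two pairs no complement invariant can succeed and must supply such a component-deletion (or otherwise non-exterior) argument.
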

  
$4_1$ and $5_1$ in Table \ref{tab:handlebodylinks} 
are the only non-split, 
irreducible $(n,1)$-handlebody
links with four and five crossings, respectively.
There are $15$ handlebody links
with six crossings, among which 
$8$ have two components ($n=2$), $6$
have three components ($n=3$), and $1$ has four components ($n=4$). 
As a side note, $6_5$ in Table \ref{tab:handlebodylinks} 
also represents the famous \emph{figure eight puzzle} 
devised by Stewart Coffin \cite{BotVan:75}. Thus,
its unsplittability implies the impossibility of
solving the puzzle (Remark \ref{rem:figure_eight_puzzle}).
Also, $6_9$ in Table \ref{tab:handlebodylinks}
is an irreducible handlebody link with 
a $\partial$-irreducible complement; such phenomenon
cannot happen when $n=1$ (Remark \ref{rem:irre_HL_re_complement}). 

%
%
Our task with respect to Table \ref{tab:handlebodylinks} is two-fold.
Firstly we need to show that there is no extraneous entry, i.e. that
%
%
all entries in the table 
\begin{itemize}
\item[U.1] 
represent non-split
handlebody links,
\item[U.2] represent irreducible
handlebody links,
\item[U.3] are mutually inequivalent, up to mirror image, 
\item[U.4] attain minimal crossing numbers.
\end{itemize}
Secondly we have to prove that the table is \emph{complete}; namely,
there is no missing handlebody link with $6$ crossings or less.
   
In Section \ref{sec:uniqueness} we prove U.1-U.3,
making use of invariants such as the linking number \cite{Miz:13},
irreducibility criteria \cite{BePaWa:20}, 
and the Kitano-Suzuki invariant \cite{KitSuz:12} (Theorems 
\ref{teo:unsplittability}, \ref{teo:irreducibility}, and 
\ref{teo:uniqueness}, respectively).  
\nada{
U.4 is a direct consequence of U.1-U.3 and E.1-E.2 below at least for a lower number of crossings so that
it is automatically satisfied assuming we prove U.1-U.4 and E.1-E.2 
for increasing values of the crossing number.
}
We prove the completeness of Table \ref{tab:handlebodylinks} 
by exhausting all---except for those obviously 
non-minimal---diagrams of non-split, irreducible $(n,1)$-handlebody links up to six crossings (Section \ref{sec:completeness}).

We first observe that the underlying plane graph of 
a diagram of a non-split, irreducible $(n,1)$-handlebody link
necessarily has edge connectivity equal to $2$ or $3$;
for the sake of simplicity, such a diagram is said to 
have $2$- or $3$-connectivity, respectively. 
Diagrams with $3$-connectivity up to six crossings
are generated by a computer code, 
whereas to recover handlebody links represented by diagrams with 
$2$-connectivity, we employ 
the knot sum---the \emph{order-$2$ vertex connected sum}---of spatial 
graphs \cite{Mor:09a}. In more detail,
a minimal diagram $D$ with $2$-connectivity can be  
decomposed by decomposing 
circles\footnote{a circle that intersects $D$ at two different arcs.} 
into simpler tangle diagrams, each of which induces a spatial 
graph that admits a minimal diagram with $3$- or $4$-connectivity, 
as illustrated in Fig.\ \ref{fig:decomposition_type_2}. 
%
%
This decomposition allows us to recover the handlebody link represented 
by $D$ by performing the \emph{knot sum}  
between prime links and a \emph{spatial graph} 
that admits a minimal diagram with $3$-connectivity. 
\begin{figure}[ht]
\def\svgwidth{0.9\columnwidth}
\begingroup%
  \makeatletter%
  \providecommand\color[2][]{%
    \errmessage{(Inkscape) Color is used for the text in Inkscape, but the package 'color.sty' is not loaded}%
    \renewcommand\color[2][]{}%
  }%
  \providecommand\transparent[1]{%
    \errmessage{(Inkscape) Transparency is used (non-zero) for the text in Inkscape, but the package 'transparent.sty' is not loaded}%
    \renewcommand\transparent[1]{}%
  }%
  \providecommand\rotatebox[2]{#2}%
  \newcommand*\fsize{\dimexpr\f@size pt\relax}%
  \newcommand*\lineheight[1]{\fontsize{\fsize}{#1\fsize}\selectfont}%
  \ifx\svgwidth\undefined%
    \setlength{\unitlength}{3685.03937008bp}%
    \ifx\svgscale\undefined%
      \relax%
    \else%
      \setlength{\unitlength}{\unitlength * \real{\svgscale}}%
    \fi%
  \else%
    \setlength{\unitlength}{\svgwidth}%
  \fi%
  \global\let\svgwidth\undefined%
  \global\let\svgscale\undefined%
  \makeatother%
  \begin{picture}(1,0.24615385)%
    \lineheight{1}%
    \setlength\tabcolsep{0pt}%
    \put(0,0){\includegraphics[width=\unitlength,page=1]{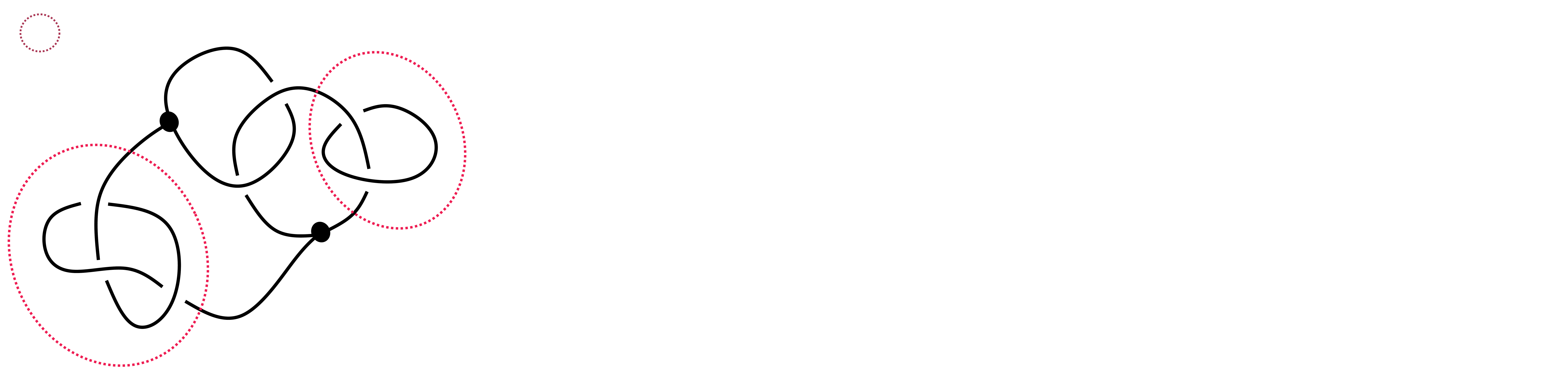}}%
    \put(0.0610321,0.22398191){\color[rgb]{0,0,0}\makebox(0,0)[lt]{\lineheight{1.25}\smash{\begin{tabular}[t]{l}{\footnotesize decomposing circle}\end{tabular}}}}%
    \put(0,0){\includegraphics[width=\unitlength,page=2]{decomposing_type_2_diagram.pdf}}%
    \put(0.70746202,0.01089694){\color[rgb]{0,0,0}\makebox(0,0)[lt]{\lineheight{1.25}\smash{\begin{tabular}[t]{l}{\footnotesize knot sum of spatial graphs}\end{tabular}}}}%
    \put(0,0){\includegraphics[width=\unitlength,page=3]{decomposing_type_2_diagram.pdf}}%
    \put(0.32583367,0.00734528){\color[rgb]{0,0,0}\makebox(0,0)[lt]{\lineheight{1.25}\smash{\begin{tabular}[t]{l}{\footnotesize induced spatial graphs}\end{tabular}}}}%
    \put(0,0){\includegraphics[width=\unitlength,page=4]{decomposing_type_2_diagram.pdf}}%
  \end{picture}%
\endgroup%
   
\caption{Decomposing a minimal diagram with $2$-connectivity.}
\label{fig:decomposition_type_2}
\end{figure}

Once a list containing all possible minimal diagrams
of non-split, irreducible handlebody links
is produced, we examine 
each entry on the list manually (Appendix \ref{sec:code}), and
show that either it is non-minimal or it represents a 
handlebody link ambient isotopic to one in Table \ref{tab:handlebodylinks},
up to mirror image. This proves the completeness, and also implies U.4, given U.1-U.3.

\nada{
The order-$2$ vertex connected sum of spatial graphs is 
\emph{the knot sum of spatial graphs}, defined via
trivial ball-arc pairs \cite{Mor:09a}. 
It is well-defined 
once a choice of an arc in the graph, 
as well as an orientation of the arc, are specified; 
in our case, only two trivalent vertices are involved, so for a 
$n$-component spatial
graph, there are $2(n+2)$ such choices. 
Alternatively, one could think of the order-$2$ vertex connected sum 
between a spatial graph and a link
as a finite-set-valued operation. 
}

\begin{theorem}\label{teo:chiral_hl}
All but $5_1$, $6_3$, $6_6$, $6_7$, $6_8$, 
$6_{10}$ in
Table \ref{tab:handlebodylinks} are achiral. 
\end{theorem}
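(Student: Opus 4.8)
The plan is to split the statement into two independent tasks: to \emph{construct}, for each entry asserted to be achiral, an ambient isotopy carrying it to its mirror image, and to \emph{obstruct} any such isotopy for the six exceptions $5_1$, $6_3$, $6_6$, $6_7$, $6_8$, $6_{10}$. Throughout, a handlebody link $\HL$ is achiral exactly when it is ambient isotopic, through orientation-preserving homeomorphisms of $\sphere$, to its reflection $\overline{\HL}$, i.e.\ to the link represented by the diagram with every crossing reversed.

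For the eleven achiral entries I would proceed constructively and diagram by diagram. For each such entry I first look for a representative diagram $D$ admitting a planar symmetry---typically a reflection in a line or a rotation by $\pi$---under which overcrossings and undercrossings are interchanged; any such symmetry exhibits the isotopy $\overline{\HL}\simeq\HL$ at once. When the minimal diagram carries no such symmetry, I would instead write down an explicit finite sequence of generalized Reidemeister moves for handlebody-link diagrams taking the mirror diagram $\overline D$ back to $D$. In either case the verification is a finite check over the eleven non-exceptional entries, and produces the required isotopies outright.

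For the six exceptions the content is the reverse---no isotopy to the mirror exists---so a mirror-distinguishing invariant is needed, and this is where the main obstacle lies. The difficulty is that the most readily computed invariant available to us, the Kitano--Suzuki invariant \cite{KitSuz:12}, counts homomorphisms out of $\pi_1(\Compl{\HL})$ and is therefore blind to chirality: the closed complement $\Compl{\overline{\HL}}$ is homeomorphic to $\Compl{\HL}$, so every invariant extracted from the abstract complement or its fundamental group takes equal values on a link and its mirror. One must instead use an invariant that remembers the orientation of $\sphere$. My first line of attack would be Mizusawa's signed linking data \cite{Miz:13}: reflection negates every linking number, so whenever the linking matrix of an entry cannot be carried to its own negative by any symmetry of the link (a permutation of the solid-torus components together with the admissible reorientations), the entry is chiral.

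The delicate point, which I expect to be the crux, is dealing with any of the six whose linking data happen to be \emph{invariant} under this sign reversal---for these the linking numbers alone cannot certify chirality, mirroring the classical situation where a chiral knot such as the trefoil has symmetric linking data. For those cases I would pass to a finer, orientation-sensitive invariant---for instance a quandle or group cocycle invariant weighted by a cohomology class, which unlike a bare coloring count does depend on the orientation of $\sphere$, or else isolate within the entry a constituent knot or link that is classically known to be chiral---and then verify by direct computation that its value genuinely differs on $\HL$ and $\overline{\HL}$. Producing, for each such stubborn entry, an invariant that is simultaneously computable and provably asymmetric under mirroring is the hard part of the argument; everything else is a finite, if laborious, diagrammatic bookkeeping.
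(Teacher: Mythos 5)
Your treatment of the achiral entries coincides with the paper's: Lemma \ref{lm:achiral} is proved exactly by exhibiting, entry by entry, an isotopy to the mirror image (only $6_2$ requires a displayed sequence of moves, Fig.\ \ref{fig:chirality_6_2}). The gap is in the chiral half, and it is twofold. First, your ``first line of attack'' via Mizusawa's linking data is vacuous for these entries. The linking number of \cite{Miz:13} is defined only up to the choice of bases of $H_1$ of the components (equivalently, via elementary divisors of a linking matrix), so it is an invariant of the unoriented handlebody link and takes the same value on $\HL$ and $r\HL$. Even working naively with a chosen orientation, five of the six exceptions ($5_1$, $6_3$, $6_6$, $6_7$, $6_8$) have exactly two components, one of which is a solid torus; reversing the core of that solid torus is always an admissible symmetry and negates the linking number, so the linking data is \emph{always} carried to its own negative. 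What you flag as a possible corner case is in fact the generic situation here, and the invariant certifies nothing.

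Second, your fallback of ``isolating a constituent chiral link'' is the right idea but is not a proof as stated: knowing that a chiral torus link sits inside $\HL$ does not obstruct $\HL\simeq r\HL$ unless you also know that every self-homeomorphism of $(\sphere,\HL)$ preserves that constituent, with a determined orientation. This is exactly what the paper supplies via Theorem \ref{teo:uniqueness_decomposition_special_case}, a uniqueness theorem for the order-$2$ connected sum decomposition along decomposing annuli, which is the technical heart of Section \ref{sec:chirality}; it handles $5_1$, $6_6$, $6_8$, $6_{10}$ directly. Even granting it, $6_7$ needs a further argument (a minimal Seifert surface of the figure-eight containing the dual arc, whose complement is a Seifert surface of an \emph{oriented} Hopf link, so that the linking number finally becomes a valid obstruction), and $6_3$ is handled by an entirely different orientation-sensitive invariant: a Kitano--Suzuki-type count of representations refined by the peripheral system, comparing the number of conjugacy classes killing $m\cdot l$ against those killing $m\cdot l^{-1}$ (the computed values are $77$ versus $111$ for $\mathsf{A}_5$). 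Your proposal names the need for such orientation-sensitive tools but neither identifies nor constructs them, so the six chirality claims remain unproved.
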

The main tool used to inspect chirality 
is Theorem \ref{teo:uniqueness_decomposition_special_case},
where we prove a uniqueness result for the decomposition of
non-split, irreducible handlebody links 
in terms of order-$2$ connected sum of handlebody-link-disk pairs 
(Definition \ref{def:two_sum}).
\begin{theorem}\label{teo:reducible}
Table \ref{tab:reducible} enumerates all non-split, reducible 
$(n,1)$-handlebody links
up to $6$ crossings, up to mirror image.
\end{theorem}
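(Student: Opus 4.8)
The plan is to reduce the reducible case to the irreducible classification of Theorem \ref{teo:table} by exploiting the connected-sum structure that a reducing sphere forces. Let $\HL$ be a non-split reducible $(n,1)$-handlebody link and let $\Stwo$ be a reducing sphere, so that $\Stwo\cap\HL$ is a single incompressible disk $D$ contained in one component $C$ of $\HL$. Since $\Stwo$ separates $\sphere$ into two balls $B_1,B_2$, the disk $D$ must separate $C$ (were one side of $\Stwo$ disjoint from $C$, the disk $D$ would lie on $\partial\HL$ rather than be properly embedded and essential). Capping each $B_i$ with a ball produces two handlebody links $\HL_1,\HL_2$ in two copies of $\sphere$, and $\HL$ is recovered as their order-$2$ connected sum along $C$ in the sense of Definition \ref{def:two_sum}. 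First I would record that both summands are non-split, and that each carries a diagram obtained by cutting a diagram of $\HL$ along the trace of $\Stwo$ and capping off, so that $c(\HL_1)+c(\HL_2)$ is at most the number of crossings of the original diagram; in particular each summand has crossing number at most that of $\HL$.

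Next I would split into the two cases dictated by the type of $C$. If $C$ is one of the solid tori, the genus-$2$ handlebody lies entirely on one side, say in $B_1$, so that $\HL_1$ is an $(n',1)$-handlebody link with $n'<n$ and $\HL_2$ is a classical link, all of whose components are solid tori; the component $C$ is then the knot connected sum of one component of $\HL_1$ with one component of $\HL_2$. If instead $C$ is the genus-$2$ handlebody, then $D$ is a separating essential disk cutting $C$ into two solid tori, so both $\HL_1$ and $\HL_2$ are classical links, and $\HL$ is obtained by boundary-connect-summing one component of $\HL_1$ to one component of $\HL_2$, the union of the two being the genus-$2$ component. In either case the summands belong to strictly simpler families: prime classical links and knots, read off from the standard link tables, together with $(n',1)$-handlebody links of smaller crossing number, which are themselves either reducible (handled by induction on the crossing number) or irreducible (hence listed in Table \ref{tab:handlebodylinks}).

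With the admissible summands pinned down, I would carry out the enumeration. The crossing bound above restricts the search to a short finite list of pairs of prime pieces, each with at most six crossings; for every such pair I would form all their order-$2$ connected sums, keeping in mind that this operation is multivalued, since it depends on the choice of summing arc and its orientation, so that a single pair of pieces can yield several handlebody links. Collecting the resulting diagrams and matching them against the entries of Table \ref{tab:reducible} shows that the table accounts for every non-split reducible link arising in this way. This is the diagrammatic counterpart of the $2$-connectivity analysis of Section \ref{sec:completeness}: a reducible link is exactly one whose minimal diagram admits a decomposing circle.

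The main obstacle I anticipate is controlling the connected-sum operation tightly enough to avoid both omissions and duplications, and to certify minimality of the tabulated crossing numbers. On the one hand, the multivaluedness requires enumerating \emph{all} inequivalent sums of each pair; on the other hand, distinct choices of pieces or of summing data may produce ambient-isotopic links, and different decompositions of one link must be reconciled. Here I would invoke the uniqueness of the order-$2$ decomposition, Theorem \ref{teo:uniqueness_decomposition_special_case}: it guarantees that a non-split reducible link determines its collection of prime summands, so that entries built from non-isomorphic collections cannot coincide, and chirality is detected by comparing the summands with their mirror images. The same uniqueness also forces any minimal diagram of such a link to decompose, which (together with the minimality of the summands' crossing numbers, U.4 applied to the irreducible pieces) yields additivity of the crossing number under the sum and hence minimality of each entry of Table \ref{tab:reducible}. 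This reduces the verification of pairwise inequivalence, up to ambient isotopy and mirror image, to a finite comparison of the summand data.
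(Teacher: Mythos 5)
There is a genuine gap, and it starts with a mis-identification of the relevant operation. A reducing sphere $\Stwo$ meets $\HL$ in a \emph{single} incompressible disk, and cutting along it and capping off is the inverse of the \emph{order-$1$} connected sum of Definition \ref{def:one_sum} — not the order-$2$ connected sum of Definition \ref{def:two_sum}, which by construction produces a sphere meeting the link in \emph{two} disks (the situation of a decomposing annulus, i.e.\ decomposability in the sense of Section \ref{sec:chirality}, a different notion used there for chirality, not for reducibility). Consequently the uniqueness result you invoke, Theorem \ref{teo:uniqueness_decomposition_special_case}, is the wrong one: besides concerning the wrong operation, its hypotheses require $\HL$ to be non-split and \emph{irreducible}, so it cannot be applied to a reducible link at all. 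The tool the paper actually uses is Theorem \ref{teo:uniqueness_onesum}, the uniqueness of the order-$1$ factorization $(\HL_1,h_1)\onesum(\HL_2,h_2)$, which is what rules out duplicates in Table \ref{tab:reducible}. Your diagrammatic dictionary is likewise off by one: reducibility corresponds to $1$-connectivity of a diagram, while a ``decomposing circle'' meeting the diagram in two arcs is the $2$-connectivity/knot-sum situation.

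The second, independent gap is your claim that the summands carry diagrams obtained by ``cutting a diagram of $\HL$ along the trace of $\Stwo$,'' so that their crossing numbers are bounded by $c(\HL)$. This presupposes that the reducing sphere can be isotoped to be visible in a minimal diagram, which is precisely what the paper does \emph{not} know in general — it is the content of Conjectures \ref{conj:decomp_is_of_type_1} and \ref{conj:crossings_additivity}, stated as open. The paper's completeness argument runs in the opposite direction: Lemmas \ref{lm:three_connected_IH_minimal} and \ref{lm:two_connected_IH_minimal} exhaustively show that every IH-minimal diagram with connectivity $\geq 2$ and at most six crossings represents one of the \emph{irreducible} links of Table \ref{tab:handlebodylinks} (Theorem \ref{teo:irreducibility}), so a non-split reducible link with at most six crossings is forced to have a $1$-connected minimal diagram, and one then enumerates all order-$1$ sums of pairs $(L_1,L_2)$ with $c(L_1)+c(L_2)\leq 6$. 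Without that exhaustion (or a proof of the visibility claim), your argument cannot certify that the table is complete or that its crossing numbers are minimal.
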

Theorem \ref{teo:reducible} follows from 
the irreducibility of handlebody links 
in Table \ref{tab:handlebodylinks} 
and a uniqueness factorization theorem 
(Theorem \ref{teo:uniqueness_onesum}) 
for non-split, reducible $(n,1)$-handlebody links
in terms of order-$1$ connected sum (Definition \ref{def:one_sum}).
  

\nada{
Type-3 handlebody links include those associated to spatial graphs that cannot be obtained
as a sum, 
in the sense of knot theory, of simpler objects.
\draftMMM{I tried to rephrase this part with a more precise statement.  Can you check if it sounds
better now?} 
\draftYYY{Sorry...I just realize there is another sentence after it;
everything is clear now...I have removed my previous 
comments.}

\footnote{The analog of the knot sum for knots cannot be defined for handlebodies in a useful way.
For this reason when considering ``composite'' handlebody links (type-2) 
we shall temporarily work
with spatial graphs instead of their ``fattened'' counterpart.
The notion of equivalence in this context is more discriminating, however this is not a problem when
looking for \emph{all} objects.}

It is also included in this family the case of (possibly composite) 
spatial graphs 
\footnote{This is done in order to be certain that we ``capture'' them when searching for all possible diagrams
with up to six crossings in Section \ref{sec:existence}.}

This allows to reduce the number of planar embeddings produced by the software code down to a manageable
number.
To get the idea we have 37 planar embeddings of graphs corresponding to the case of six crossings
and meeting all the requirements for a type-3 handlebody link (see Section \ref{sec:existence}).

The ``missing'' type-2 entries in the table can then all be recovered by ``adding together'' simpler
pieces each with at most $4$ crossings in its minimal diagram.
We must however be careful and do this in the context of spatial graphs since the notion of knot sum that we need here
is not well defined for handlebodies.

}

The structure of the paper is the following.
Basic properties of handlebody links
are reviewed in Section \ref{sec:prelim}; 
uniqueness, unsplittability, and irreducibility of handlebody links 
in Table \ref{tab:handlebodylinks}
are examined in Section \ref{sec:uniqueness}. 
The completeness of the table is discussed in 
Section \ref{sec:completeness}. 
Section \ref{sec:chirality} introduces the notion of 
decomposable handlebody links, which is used for examining  
the chirality of handlebody links in Table \ref{tab:handlebodylinks}.
Lastly, a complete classification 
of non-split, reducible handlebody links, up to $6$ crossings,
is given in Section \ref{sec:reducible}. In the appendix 
we include an analysis on the output of the code, 
available at \url{http://dmf.unicatt.it/paolini/handlebodylinks/}.

Throughout the paper we work in the $\op{PL}$ category;
for the illustrative purposes, the drawings often appear smooth.
In the case of $3$-dimensional submanifolds in $\sphere$,
the $\op{PL}$ category is equivalent to the smooth category due to
\cite[Theorem $5$]{Bro:62}, \cite[Theorems $7.1$, $7.4$]{HirMaz:74},
\cite[Theorem $8.8$, $9.6$, $10.9$]{Mun:66}.



%
%
%
%
\begin{table}[h]
\caption{Non-split, irreducible handlebody links up to six crossings.}
\label{tab:handlebodylinks}
\includegraphics[height=0.18\textwidth]{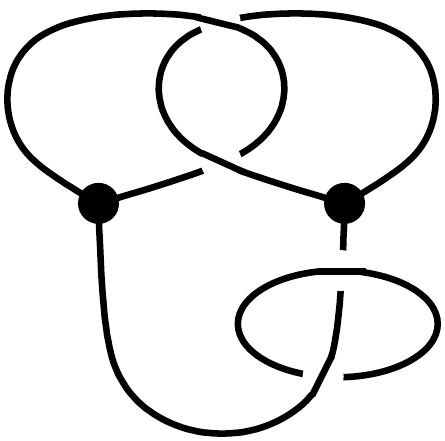}$4_1$~~~~
\includegraphics[height=0.18\textwidth]{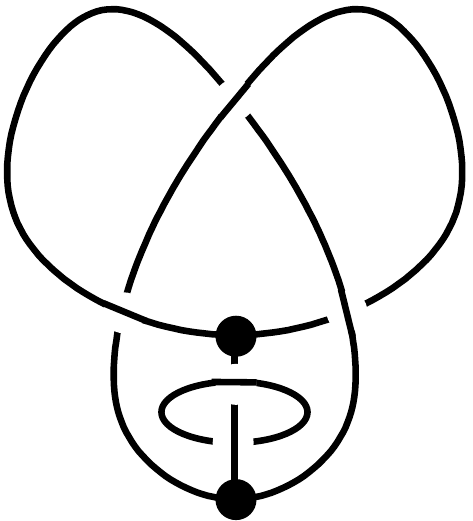}$5_1$
\\
\smallskip
\includegraphics[height=0.18\textwidth]{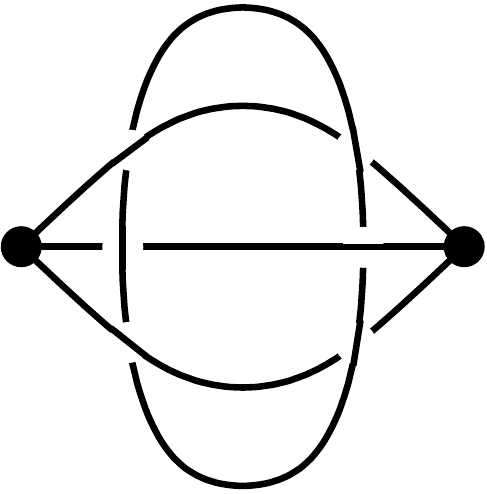}$6_1$
\includegraphics[height=0.18\textwidth]{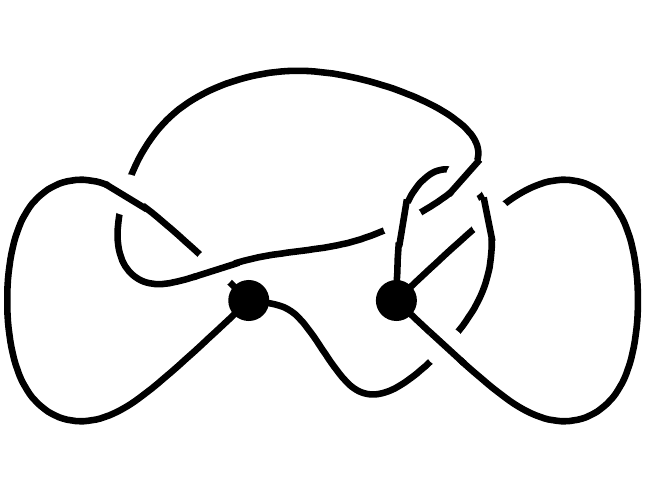}$6_2$
\includegraphics[height=0.18\textwidth]{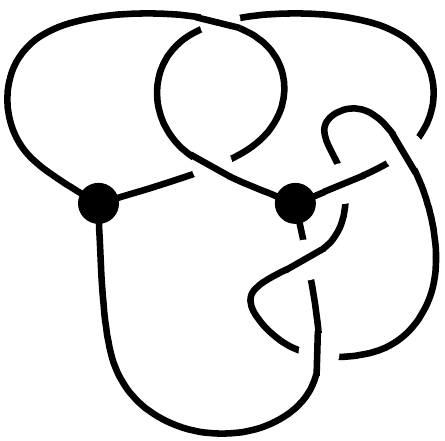}$6_3$
\includegraphics[height=0.18\textwidth]{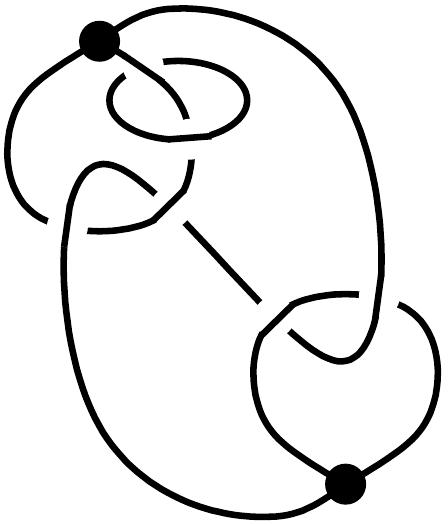}$6_4$
\\
\smallskip
\includegraphics[height=0.18\textwidth]{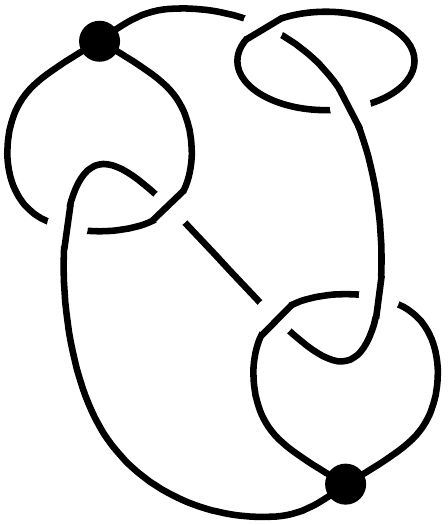}$6_5$
\includegraphics[height=0.18\textwidth]{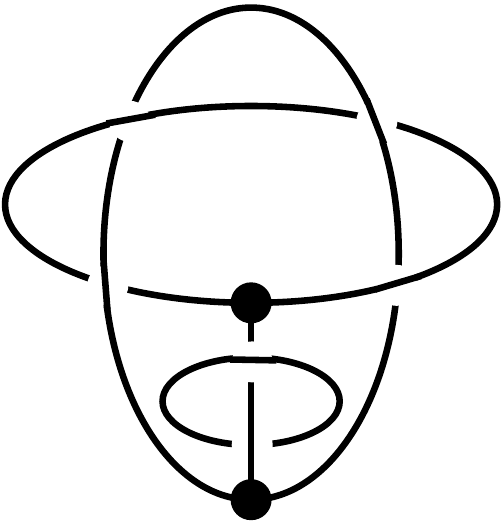}$6_6$
\includegraphics[height=0.18\textwidth]{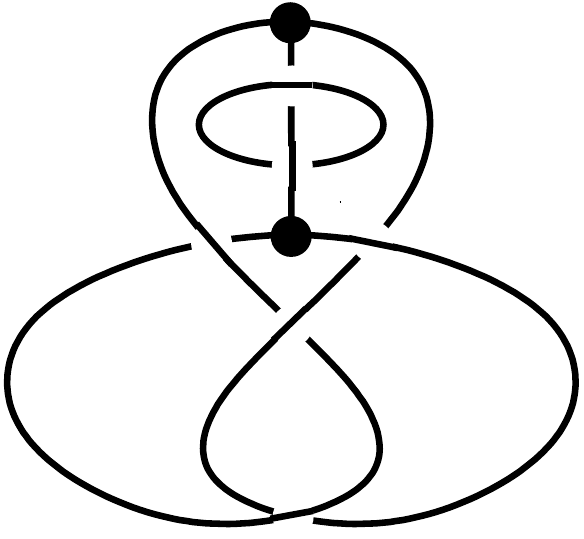}$6_7$
\includegraphics[height=0.18\textwidth]{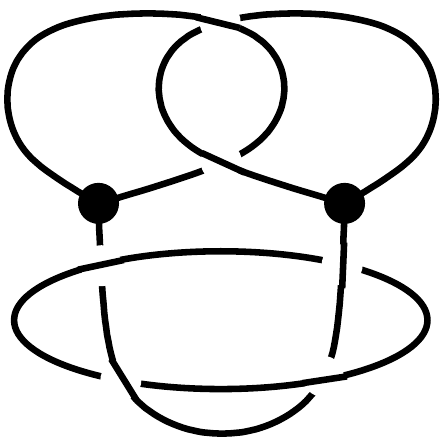}$6_8$
\\
\smallskip
\includegraphics[height=0.18\textwidth]{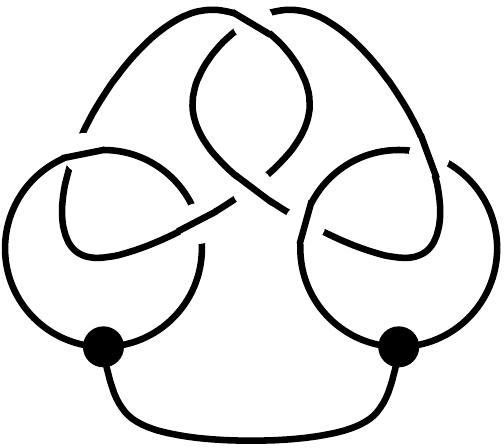}$6_9$
\includegraphics[height=0.18\textwidth]{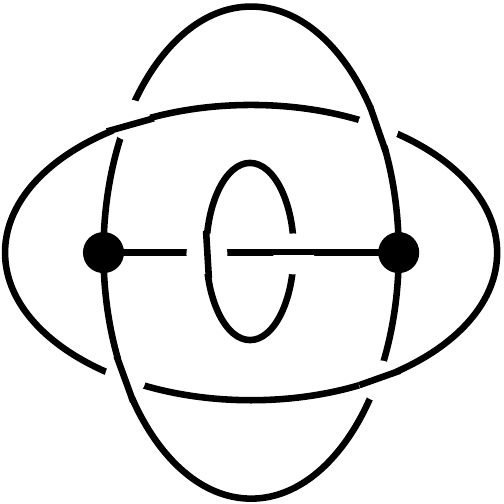}$6_{10}$
\includegraphics[height=0.18\textwidth]{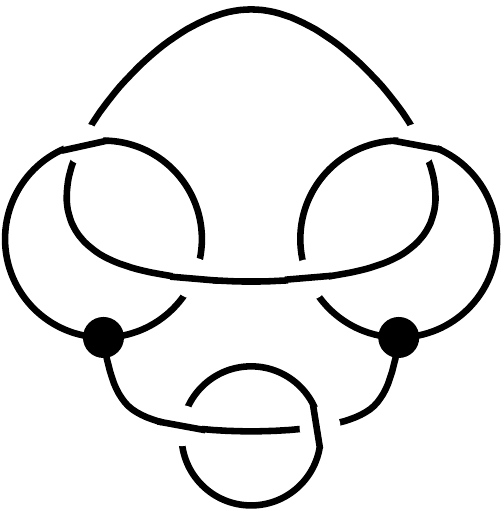}$6_{11}$
\includegraphics[height=0.18\textwidth]{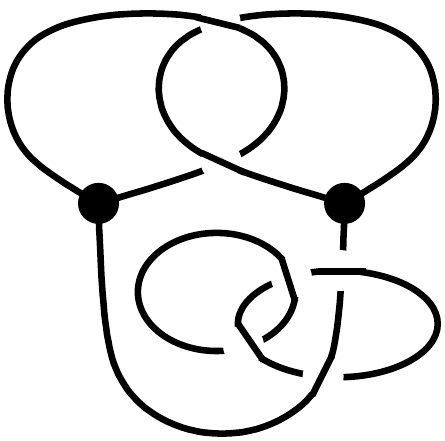}$6_{12}$
\\
\smallskip
\includegraphics[height=0.18\textwidth]{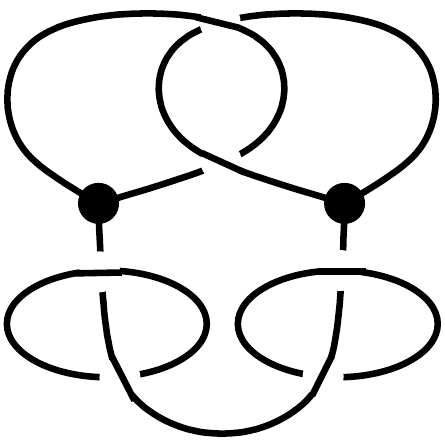}$6_{13}$
\includegraphics[height=0.18\textwidth]{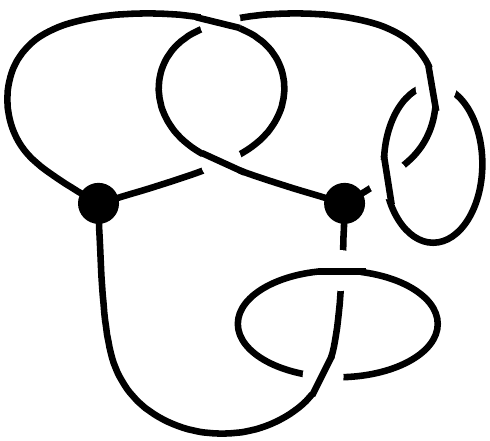}$6_{14}$
\includegraphics[height=0.18\textwidth]{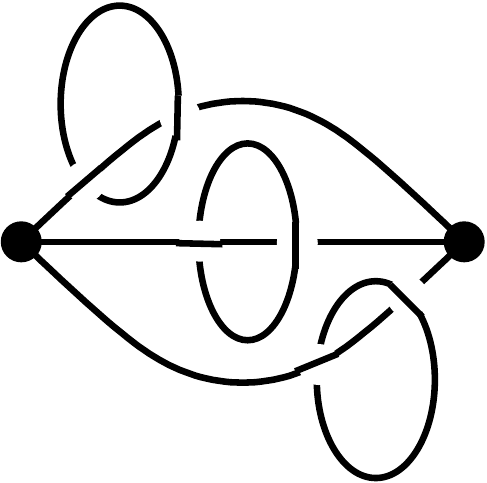}$6_{15}$
\end{table}

%
%


\section{Preliminaries}\label{sec:prelim}

\subsection{Handlebody links and spatial graphs}
\begin{definition}[\textbf{Embeddings in $\sphere$}]
A handlebody link $\HL$ (resp.\ a spatial graph $G$) is
an embedding of finitely many handlebodies of positive genus 
(resp.\ a finite graph\footnote{A 
finite graph is a graph with finitely many
vertices and edges; in addition, we require that no component  
has a positive Euler characteristic, to avoid trivial objects. 
A circle is regarded
as a graph without vertices as in \cite{Ish:08}.}) in the oriented $3$-sphere $\sphere$.
\end{definition}
The \textit{genus} of a handlebody link is the sum of
the genera of its components;
a spatial graph is \textit{trivalent} if the underlying graph is
trivalent (each node has degree 3). By a slight abuse of notation, we also use
$\HL$ (resp.\ $G$) to denote the image of the embedding
in $\sphere$. Let $r\HL$ (resp.\ $rG$) be the mirror image
of $\HL$ (resp.\ $G$).
\begin{definition}[\textbf{Equivalence}]
Two handlebody links $\HL$, $\HL'$ (resp.\ spatial gra\-phs $G,G'$) are  
equivalent if they are ambient isotopic;
they are equivalent up to mirror image 
if $\HL$ (resp. $G$) is equivalent to $\HL'$ or $r\HL'$ (resp.\ $G'$ or $rG'$).
\end{definition}
%
\nada{
\draftGGG{was not Cerf?} 
\draftYYY{Oh..yes. Cerf proves that $O(4)\hookrightarrow Diff(S^3)$
induces an isomorphism on $\pi_0$ and Hatcher proves the inclusion
is a homotopy equivalence (The Smale conjecture); so, for our purpose,
Cert's theorem is enough. Maybe we cite both papers!? Since 
we are working in PL, we perhaps need to mention PL=Smooth.}
\draftYYY{I found the PL version of the theorem...}
}
A regular neighborhood of a spatial graph 
defines a handlebody link,  
up to equivalence \cite[$3.24$]{RouSan:82}, and
a spine of a handlebody link $\HL$
is a spatial graph $G$ with $\HL$ a regular neighborhood of $G$ 
\cite{Ish:08}.
In practice, it is more convenient to consider
spines that are trivalent.
\begin{lemma}
Every handlebody link admits a (trivalent) spine.
\end{lemma}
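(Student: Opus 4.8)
The plan is to start from an arbitrary spine of $\HL$ and then push it to a trivalent one through a sequence of purely \emph{local} modifications, each supported inside a small ball, so that at every stage the regular neighborhood is unchanged up to ambient isotopy. Since a regular neighborhood is determined up to ambient isotopy \cite[$3.24$]{RouSan:82}, \cite{Ish:08}, it will suffice to exhibit the moves and to check that each one preserves both the ambient isotopy type of the regular neighborhood and the first Betti number of the underlying graph (hence the genus).

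First I would produce an initial spine. Each component of $\HL$ is a handlebody of positive genus $g$, so it admits a handle decomposition with a single $0$-handle and $g$ $1$-handles, and therefore deformation retracts onto an embedded wedge of $g$ circles, of which it is a regular neighborhood; taking the disjoint union over all components yields a spatial graph $G_0$ with $\HL$ a regular neighborhood of $G_0$. Here I would record the bookkeeping fact that for a connected graph the genus of the regular neighborhood equals $b_1 = E - V + 1$, so every move below must leave $b_1$ unchanged. Note that this initial $G_0$ has, per component, a single vertex of degree $2g$ and no vertex of degree $1$, which keeps the subsequent analysis simple.

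Next I would trivalentize $G_0$ using three moves. (i) A vertex of degree $2$ is smoothed by merging its two incident edges into one; this removes one vertex and one edge, leaving $b_1$ fixed and the neighborhood unchanged (for $g=1$ this already turns the one-vertex circle into a vertexless, vacuously trivalent, circle). (ii) A vertex $v$ of degree $d \ge 4$ is split: inside a small ball around $v$ I introduce a new vertex $w$ joined to $v$ by a short edge, and re-route two of the edges incident to $v$ so that they terminate at $w$. Then $w$ is trivalent, $v$ has degree $d-1$, and one vertex and one edge have been added, so $b_1$ is preserved; iterating reduces every vertex to degree $3$. (iii) For robustness, should an arbitrary starting spine be used instead of $G_0$, a degree-$1$ vertex together with its edge forms a free finger whose regular neighborhood is a ball absorbed into the adjacent vertex, so collapsing that edge (removing one vertex and one edge) preserves $b_1$ and the neighborhood; because every component has non-positive Euler characteristic ($b_1 \ge 1$, or a circle), this collapsing terminates at minimal degree $\ge 2$ without contracting any component to a point. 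Applying (iii) then (i) and (ii) produces an embedded trivalent spatial graph $G$.

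The hard part will be verifying that each move is realized by an ambient isotopy of the regular neighborhood, i.e.\ that the graphs before and after have ambient-isotopic regular neighborhoods rather than merely homeomorphic abstract handlebodies. This I would reduce to a local model in a ball $B$: the two graphs agree outside $B$, and inside $B$ their regular neighborhoods are standard $\op{PL}$ handlebodies-with-boundary-pattern meeting $\partial B$ in the same pattern of disks, hence isotopic rel $\partial B$. Granting this local statement, uniqueness of regular neighborhoods glues the local isotopies into a global ambient isotopy, and the invariance of $b_1$ throughout guarantees that $G$ is a spine of the \emph{same} handlebody link $\HL$. This establishes that $\HL$ admits a trivalent spine.
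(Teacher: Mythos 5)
Your proof is correct in substance but takes a genuinely different route from the paper's. The paper fixes a disk system of $3g-3$ disjoint incompressible disks cutting the handlebody into $2(g-1)$ balls, each meeting exactly three of the disk neighborhoods (a pants decomposition), and builds the trivalent spine directly as the dual graph of that decomposition; the trivalent vertices appear one per complementary ball, and the count $2(g-1)$ falls out immediately. You instead start from an arbitrary spine (a wedge of circles coming from a handle decomposition) and trivalentize it by local moves --- smoothing degree-$2$ vertices, splitting high-degree vertices, collapsing degree-$1$ fingers --- checking that each move preserves $b_1$ and the ambient isotopy class of the regular neighborhood. Your splitting move is exactly the inverse of the contraction move of \cite{Ish:08} that the paper invokes later for diagrams, so your argument essentially re-proves that move at the level of embedded graphs; what it buys is that you avoid having to produce the disk system (which the paper asserts exists without proof), and your argument upgrades \emph{any} spine to a trivalent one rather than constructing one ad hoc. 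The one step you should tighten is the local model: two balls in $B$ meeting $\partial B$ in the same pattern of disks are \emph{not} in general isotopic rel $\partial B$ (a regular neighborhood of a knotted properly embedded arc is a counterexample), so ``same pattern of disks, hence isotopic'' is too quick as stated. The fix is available from your own setup: choose the new tree $T'$ inside the interior of a regular neighborhood $N$ of the old cone $T$ rel $\partial B$, observe that $N$ collapses to $T'$ so that $N$ is simultaneously a regular neighborhood of $T$ and of $T'$, and then apply uniqueness of regular neighborhoods \cite[$3.24$]{RouSan:82} to conclude. With that adjustment the proof is complete.
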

\begin{proof}
It suffices to prove the connected case. We suppose 
$\op{HK}$ is a handlebody knot of genus $g$, and
$\mathbf{D}=\{D_1,\cdots,D_{3g-3}\}$ 
is a set of disjoint
incompressible disks in $\op{HK}$ 
such that 
the complement   
$\op{HK}\setminus \bigcup_i N(D_i)$
of their tubular neighborhoods $N(D_i)$ in $\op{HK}$
consists of $2(g-1)$ $3$-balls, 
each of which has non-trivial intersection with
exactly three components of 
$\coprod_{i=1}^{3g-3} \partial \overline{N(D_i)}$.
Such a disk system always exists.

Let disks $D_{i1},D_{i2}, D_{i3}$ be 
components of $B_i\cap \big(\bigcup_{k=1}^{3g-3} \overline{N(D_k)}\big)$,
and choose points $v_{i1},v_{i2},v_{i3}$ in the interior of 
$D_{i1},D_{i2}, D_{i3}$ and a point $v_i$ in
the interior of $B_i$. Then, join $v_i$ to $v_{ij}$ by a path for each $j$; this gives us a trivalent vertex. 
Repeat the construction for every $i$, and then glue the $v_{ij}$ 
together so that the
vertices $v_{ij}$ and $v_{i'j'}$ are identified if 
they are in the same $\overline{N(D_k)}$, for some $k$.
This way, we obtain a connected trivalent spine of $\op{HK}$
with $2(g-1)$ trivalent vertices. 
\end{proof}
In general, a trivalent spine of 
a $n$-component handlebody link
of genus $g$ has $2(g-n) = 2t$  
trivalent vertices, and 
we call such a handlebody link 
a $(n,t)$-handlebody link. This paper is primarily 
concerned with the case $t=1$.

\nada{
\begin{definition}[\textbf{Diagram}]
Given a plane graph $G$ with trivalent and quadrivalent vertices,
an associated diagram $D$ is the plane graph
$G$ with each quadrivalent vertex replaced by a crossing.
\end{definition}
Since, for each quadrivalent vertex, 
there are two possible crossings, 
if $G$ has $c$ quadrivalent then there 
are $2^{c-1}$ different associated diagrams, up to mirror image.  
}

\nada{  
\draftMMM{Actually, we should allow for two-valent $0$-simplices; or am I
missing something? On the other hand we prevent the presence of one-valent
or zero-valent $0$-simplices
}
\draftYYY{Yes. It is just some $1$-simplicies attached along
their end points, so it
allows two-valent $0$-simplicies, and $1$-valent 
$0$-simplicies (but would that constitutes a problem for our arguments);
on the other hand, since it is $1$-dimension, disconnected $0$-simplicies
should be excluded.}
}

\subsection{Diagrams}   
Let $\mathbb S^k=\mathbb R^k \cup \infty$, and without loss of generality,
it may be assumed handlebody links or spatial graphs are away from $\infty$.
\begin{definition}[\textbf{Regular projection}]
A regular projection of a spatial graph $G$
is a projection $\pi:\sphere\setminus\infty\rightarrow 
\mathbb S^2\setminus\infty$
such that the set $\pi^{-1}(x) \cap G$ is finite
with its cardinality $\#(\pi^{-1}(x) \cap G)\leq 2$ 
for any $x\in \mathbb S^2\setminus\infty$,
and no $0$-simplex of the polygonal subset $G$ of $\sphere$  
is in the preimage of a double point, a 
double point being a point $x \in \mathbb S^2\setminus\infty$
with $\#(\pi^{-1}(x) \cap G) = 2$. 
\end{definition}
 
As with the case of knots, 
up to ambient isotopy, 
every spatial graph admits a regular projection:
the idea is to choose a vector $v$ neither parallel
to a $1$-simplex in the polygonal subset $G\subset \sphere\setminus \infty=\mathbb{R}^3$  
nor in a plane containing a $0$-simplex and a $1$-simplex
or two $1$-simplices; then isotopy $G$ slightly to
remove those points $x$ with $\#\pi_v^{-1}(x)\cap G>2$,
where $\pi_v$ is the projection onto the plane normal to $v$.   


\begin{definition}[\textbf{Diagram of a spatial graph}]
A diagram of a spatial graph $G$ is
the image of a regular projection of $G$ 
with relative height information added to each double point.
\end{definition}

The convention is to make breaks in the line corresponding to 
the strand passing underneath; thus each double point becomes
a \textit{crossing} of the diagram.

\begin{definition}[\textbf{Diagram of a handlebody link}]
A diagram of a handlebody link $\HL$
is a diagram of a spine of $\HL$.
\end{definition}

A diagram of $G$ (resp.\ $\HL$) is trivalent if it is
obtained from a regular projection of a trivalent spatial graph
(resp.\ spine). 
 
\begin{definition}[\textbf{Crossing numbers}]
The crossing number $c(D)$ of a 
diagram $D$ of a handlebody link $\HL$ (resp.\ of a spatial graph $G$)
is the number of crossings in $D$. 
The crossing number $c(\HL)$   
of $\HL$ (resp.\ $c(G)$ of $G$) is
the minimum of the set
\[
\{ c(D)\mid \text{$D$ a diagram of $\HL$ $($resp. $G)$}\}.
\] 
\end{definition}

\begin{definition}[\textbf{Minimal diagram}]
A minimal diagram $D$ of a handlebody link $\HL$ (resp.\ 
of a spatial graph $G$) 
is a diagram of $\HL$ (resp. $G$) with $c(D) = 
c(\HL)$ (resp. $c(D)=c(G)$).
\end{definition}
Every multi-valent vertex in a minimal diagram $D$ 
can be replaced with some trivalent vertices by the inverse of
the contraction move \cite[Fig.\ $1$]{Ish:08} without changing
the crossing number, so
for a handlebody link (resp.\ a spatial graph)
there always exists a trivalent minimal diagram $D$. 
From now on, we shall use the term ``a diagram"
to refer to a trivalent diagram of 
either a spatial graph or a handlebody link.

Observe that, regarding each crossing as a quadrivalent
vertex, we obtain a plane graph, a finite graph
embedded in the $2$-sphere.
In practice, we work backward and start with 
a plane graph having only trivalent and quadrivalent vertices,
and produce diagrams 
by replacing quadrivalent vertices with under- or over-crossings. 
If the plane graph has $2t$ trivalent vertices 
and $c$ quadrivalent vertices, then we can recover $2^{c-1}$ diagrams from it, up to mirror image. 
In particular, a $c$-crossing $(n,t)$-handlebody link
can be recovered from one of these plane graphs.
Therefore if one can enumerate all plane graphs
with $2t$ trivalent vertices and up to $c$ quadrivalent vertices,
then one can find all $(n,t)$-handlebody links
up to $c$ crossings.

\nada{
\begin{definition}[\textbf{Edge connectivity---diagrams}]
Given a diagram $D$, a cutting circle $S^1\subset \mathbb S^2$ 
is a circle which intersects $D$ transversally;
we let $D_1,D_2$ be the closures of complements of $\mathbb S^2\setminus S^1$.

A diagram $D$ has edge-connectivity $1$ if
there exists a cutting circle $S^1$ such that
$D$ and $S^1$ intersect at exactly one point, and
the diagrams $D\cap D_1, D\cap D_2$ 
represent spines of non-trivial handlebody links.

A diagram $D$ has edge-connectivity $2$ if
there exists a cutting circle $S^1\subset \mathbb S^2$ such that
$D$ and $S^1$ intersect at exactly two points $x,y$ and
the diagrams $(D\cap D_1)\cup l, (D\cap D_2)\cup l$ 
represent non-trivial spatial graphs,
where $l$ is the closure of a component of $S^{1}\setminus \{x, y\}$.
\end{definition}

the reverse operation is not uniquely defined.
We shall then simply consider all possible choices of the underpass/overpass
information for each crossings and get a set of possible reconstructions of a diagram
  
The \emph{crossing number} of a handlebody link is the minimum number of crossings that can be
achieved with a diagram of the link (called minimal diagram), i.e. from a \emph{drawing} in $\Sbb^2$ consisting of exactly
two trivalent nodes (points that are locally endpoints of three arcs) and a number of
(possibly closed) arcs that mutually intersect transversally at \emph{crossings}.
\draftYYY{Perhaps we should make ``a diagram of a handlebody link" into a
definition. Ishii in \cite{Ish:08} starts with spatial graphs
and uses neighborhood equivalence, a route a bit different from us.
I can try to do this part.}
\draftGGG{Ok I wait for your corrections, thanks}
At each crossing we indicate with the usual graphical notation which one of the
two intersecting arc is the \emph{underpass}.
A diagram can then be \emph{lifted} in $\sphere$ into an embedded \emph{graph}%
\footnote{Here a graph is allowed to have closed loops
(with no end-points and hence with no vertex associated to them).}
and subsequently \emph{fattened} by taking a small regular neighborhood of which it is a deformation
retract.

To a connected diagram that does not represent the unknot we can also associate a plane
graph%
\footnote{A plane graph is a planar graph together with an embedding in $\Sbb^2$.} 
simply obtained by \emph{forgetting} the underpass/overpass information.
For diagrams of handlebody links with $n$ components and genus $n+1$ the associated
plane graph will have two $3$-valent vertices and $c$ $4$-valent vertices.
Clearly we lose information in the transition from diagram to plane graph, so that
the reverse operation is not uniquely defined.
We shall then simply consider all possible choices of the underpass/overpass
information for each crossings and get a set of possible reconstructions of a diagram
from a plane graph. \draftGGG{here refer to the sequel...}
}
\subsection{Moves}

\begin{definition}[\textbf{Moves}]
Local changes in a diagram depicted in Fig.\ \ref{fig:reidemeister_a}
and Fig.\ \ref{fig:reidemeister_b} are called 
generalized Reidemeister moves, 
and the local change in Fig.\ \ref{fig:IH-move} 
is called an IH-move.
\end{definition}

Note that spines of equivalent handlebody links
might be inequivalent as spatial graphs;
indeed, the following holds.

\begin{theorem}[\text{\cite[Theorem $2.1$]{Kau:89}, \cite{Yet:89}}]\label{thm:G_Reid_moves_spatial_graph}
Two trivalent spatial graphs
are equivalent if and only if their diagrams
are related by a finite sequence of generalized Reidemeister moves.
\end{theorem}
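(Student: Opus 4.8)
The plan is to follow the template of the classical Reidemeister theorem for links: reduce ambient isotopy in the $\op{PL}$ category to a finite list of elementary combinatorial moves, and then read off the effect of each such move on a generic projection. Throughout I would fix a generic direction of projection; since the statement concerns \emph{diagrams} rather than a single fixed projection, I am free to perturb the direction slightly whenever general position is needed, at the cost of a planar isotopy of the diagram, which is absorbed into the move sequence.

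The easy direction is that each generalized Reidemeister move and each planar isotopy is induced by an ambient isotopy of $\sphere$ supported in a small ball; hence diagrams related by a finite sequence of such moves represent ambient isotopic trivalent spatial graphs. This is checked move by move: the three classical moves in Fig.~\ref{fig:reidemeister_a} are the usual ones, and the vertex moves in Fig.~\ref{fig:reidemeister_b}---sliding a strand over or under a trivalent vertex, and rearranging the three germs of edges around a vertex---are each manifestly realized by a local isotopy.

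For the converse I would argue as follows. By $\op{PL}$ general position, two ambient isotopic trivalent spatial graphs $G,G'$ are connected by a finite sequence of elementary moves of two kinds: (i) a \emph{triangle move} performed in the interior of an edge, replacing a sub-segment $\sigma$ by the other two sides of a triangle $T$ that meets $G$ only along $\sigma$; and (ii) a triangle move adjacent to a vertex $v$, in which an edge incident to $v$ is replaced across a triangle having one corner at $v$ (after an initial isotopy we may assume the vertex sets of $G$ and $G'$ coincide and are fixed). It then suffices to verify that, after choosing the projection generic with respect to $T$, the projected pictures before and after each elementary move differ by a finite sequence of generalized Reidemeister moves. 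For a move of type (i) whose triangle projects to a disk crossed by no other strand one obtains a composition of $R1$ and $R2$; if a single strand crosses the projected triangle one obtains an $R3$; the relative-height data are recovered because $T$ lies to one definite side of $G$. Away from vertices this is exactly the link-theoretic Reidemeister analysis, so the argument reduces to the classical one. For a move of type (ii) the projected triangle abuts the image of $v$, and one reads off precisely the vertex moves of Fig.~\ref{fig:reidemeister_b}: passing a strand across the corner at $v$ yields the over/under-vertex move, while a triangle sweeping one incident edge past the other two yields the vertex-rearrangement move.

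The main obstacle is the exhaustive case analysis for elementary moves meeting a neighborhood of a vertex, together with the bookkeeping of the crossing (height) information at each stage. One must check that a triangle $T$ interacting with the star of $v$ can always be subdivided into sub-triangles, each of which projects either to a vertex-free configuration (reducing to the link case) or to one of the two vertex moves, so that no move outside the listed set is ever forced. A secondary subtlety is that perturbing the projection direction to attain general position with respect to each successive $T$ alters the intermediate diagram only by a planar isotopy; this is harmless but, as in the classical setting, must be stated explicitly and folded into the sequence of generalized Reidemeister moves.
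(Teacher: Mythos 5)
The paper gives no proof of this statement: it is imported verbatim from Kauffman \cite{Kau:89} (his Theorem~2.1) and Yetter \cite{Yet:89}, so there is no internal argument to compare against. Your sketch is, in outline, exactly the proof in those sources: the forward direction is checked move by move, and the converse reduces a PL ambient isotopy to elementary triangle ($\Delta$-) moves, followed by a general-position analysis of how each projected triangle changes the diagram, with triangles in edge interiors giving the classical moves of Fig.~\ref{fig:reidemeister_a} and triangles abutting a trivalent vertex giving the moves of Fig.~\ref{fig:reidemeister_b}. The outline is sound, and the standard prism argument you allude to does recover the height data (a connected arc disjoint from $T$ inside the prism over the projected triangle lies entirely above or entirely below $T$). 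Two caveats keep this a proof schema rather than a proof. First, the reduction ``we may assume the vertex sets of $G$ and $G'$ coincide and are fixed'' is not free: the ambient isotopy moves vertices, so one must decompose it into small steps each supported in a ball that either displaces an edge interior or carries a vertex star rigidly, the latter projecting to a planar isotopy plus moves of type IV/V; this is standard but is part of the work. Second, everything you label ``the main obstacle''---subdividing a triangle meeting the star of a vertex into sub-triangles each inducing a single listed move, and verifying that no unlisted move is ever forced---is precisely the substantive content of the cited proofs, so deferring it means the hard part remains unproved here. For the purposes of this paper, where the statement is a quoted black box, that is entirely acceptable.
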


\begin{theorem}[\text{\cite[Corollary $2$]{Ish:08}}]\label{thm:IH_move_handlebody_link}
Two handlebody links  
are equivalent if and only if their trivalent diagrams
are related by a finite sequence of 
generalized Reidemeister moves and IH-moves.
\end{theorem}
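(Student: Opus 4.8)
The plan is to translate the statement about handlebody links into a statement about their spines, which are trivalent spatial graphs, and then feed it into Theorem~\ref{thm:G_Reid_moves_spatial_graph}. The bridge between the two worlds is the regular-neighborhood/spine correspondence, together with the observation that an IH-move is exactly the diagrammatic shadow of passing from one trivalent spine of a fixed handlebody link to another. Throughout, write $N(G)$ for a regular neighborhood of a spine $G$; recall that $N(G)$ is well defined up to equivalence and that $\HL = N(G)$ whenever $G$ is a spine of $\HL$.

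For the \emph{if} direction it suffices to treat a single move. If two trivalent diagrams $D,D'$ are related by a generalized Reidemeister move, then by Theorem~\ref{thm:G_Reid_moves_spatial_graph} the underlying trivalent spatial graphs $G,G'$ are ambient isotopic, and since ambient isotopic spatial graphs have ambient isotopic regular neighborhoods, $N(G)$ and $N(G')$ are equivalent handlebody links. If instead $D,D'$ are related by an IH-move, then $G,G'$ differ only inside a ball $B\subset\sphere$, where one sees the two local graphs ``$H$'' and ``$I$''. Both local graphs are contractible trees meeting $\partial B$ in the same four points, and their regular neighborhoods in $B$ are both $3$-balls meeting $\partial B$ in the same four disks; hence $N(G)$ and $N(G')$ agree outside $B$ and on $\partial B$, so they are the \emph{same} handlebody link. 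Composing over a finite sequence of moves gives the claim.

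For the \emph{only if} direction, assume $\HL=N(G)$ and $\HL'=N(G')$ are equivalent, where $G,G'$ are the trivalent spines underlying $D,D'$, and pick an ambient isotopy $f$ of $\sphere$ with $f(\HL)=\HL'$. Then $f(G)$ is a trivalent spine of $\HL'$ ambient isotopic to $G$, so by Theorem~\ref{thm:G_Reid_moves_spatial_graph} the diagram of $f(G)$ is obtained from $D$ by generalized Reidemeister moves. This reduces the theorem to the following \textbf{key lemma}: any two trivalent spines of one and the same handlebody link are related by a finite sequence of IH-moves, up to an ambient isotopy supported in the handlebody link. Granting it, $f(G)$ and $G'$ are related by IH-moves together with an isotopy (the latter again translating into generalized Reidemeister moves), and chaining the sequences shows that $D$ and $D'$ are related by generalized Reidemeister and IH-moves.

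The main obstacle is this key lemma, a purely $3$-dimensional statement about a single handlebody $H$. The approach I would take is dual: a trivalent spine of a genus-$g$ handlebody $H$ is Poincar\'e dual to a complete system of $3g-3$ meridian disks cutting $H$ into $2(g-1)$ balls, each meeting exactly three disks, the vertices of the spine being the centers of the balls and its edges dual to the disks (this is precisely the disk system produced in the spine-existence lemma above). Under this dictionary an IH-move corresponds to the elementary move that replaces one separating disk inside such a ball by one of the two other disks that re-pair the three incident disk-traces; the ball, hence the neighborhood, is unchanged. It then suffices to show that any two complete trivalent disk systems of $H$ are connected by a finite sequence of these elementary re-pairings, up to isotopy, which I would deduce from the connectivity of the disk complex of a handlebody under handle slides, after verifying that each handle slide can be factored into elementary IH-type re-pairings and that intermediate systems can be kept complete and trivalent. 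The delicate point is exactly this factorization: a generic handle slide is a global modification, and the work lies in decomposing it into single IH-moves while keeping careful track of the passage between isotopies of spines and generalized Reidemeister moves at the diagram level.
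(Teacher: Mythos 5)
This statement is imported verbatim from \cite[Corollary 2]{Ish:08}; the paper offers no proof of it, so there is no in-paper argument to compare against, and you should be judged on whether your proposal would stand on its own as a proof of Ishii's theorem. It does not, and you essentially say so yourself. Your ``if'' direction is complete and correct: Reidemeister moves preserve the spatial graph up to isotopy and hence the regular neighborhood, and the regular neighborhoods of the ``I'' and ``H'' trees inside the ball $B$ are both $3$-balls meeting $\partial B$ in the same four disks, so an IH-move does not change the handlebody link up to isotopy. Your ``only if'' direction correctly reduces the theorem, via Theorem \ref{thm:G_Reid_moves_spatial_graph}, to the key lemma that any two trivalent spines of one handlebody link are related by IH-moves up to ambient isotopy.

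The genuine gap is that this key lemma \emph{is} the theorem: it is the entire technical content of Ishii's result, and your treatment of it is a plan rather than a proof. Dualizing trivalent spines to complete meridian-disk systems is the right dictionary (note one slip: the IH-move re-pairs the \emph{four} disk-traces on the boundary of the ball obtained by amalgamating the two complementary balls adjacent to the dual disk of the contracted edge, not three), and under that dictionary the lemma becomes the connectivity, under the elementary re-pairing move, of the complex of complete disk systems whose complementary pieces are balls meeting exactly three disks each. You propose to deduce this from connectivity of the disk complex under handle slides ``after verifying that each handle slide can be factored into elementary IH-type re-pairings'' while ``keeping intermediate systems complete and trivalent'' --- but that verification is precisely where all the work lies, and nothing in your proposal indicates how to carry it out or why the factorization cannot get stuck (e.g.\ a handle slide generically destroys the trivalent/complete structure of the system, and restoring it is not obviously a composition of single re-pairings). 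Until that step is supplied, the argument establishes only the easy implication and a correct reduction of the hard one.
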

\begin{figure}[ht]
\includegraphics[height=0.18\textwidth]{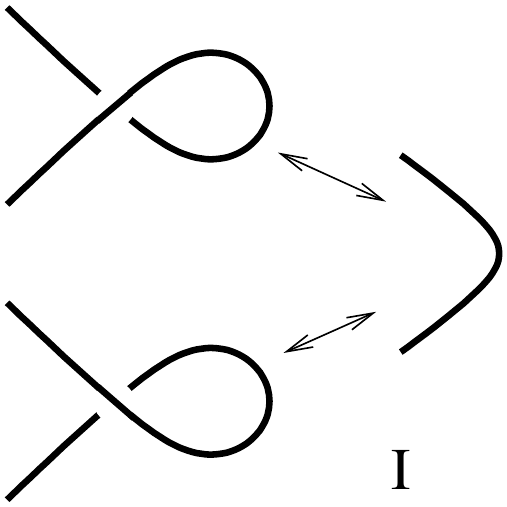}~~
\includegraphics[height=0.18\textwidth]{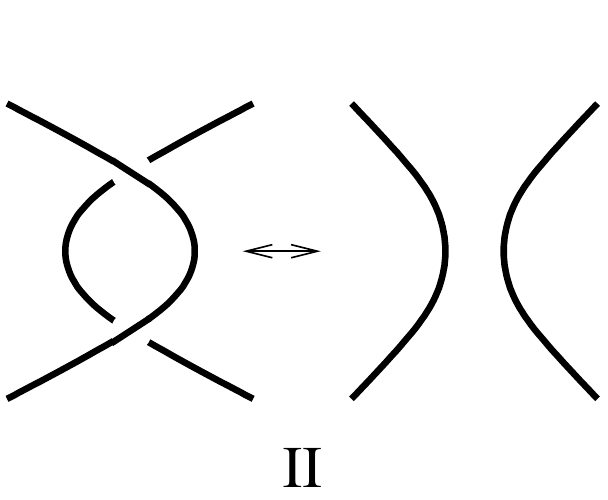}~~
\includegraphics[height=0.18\textwidth]{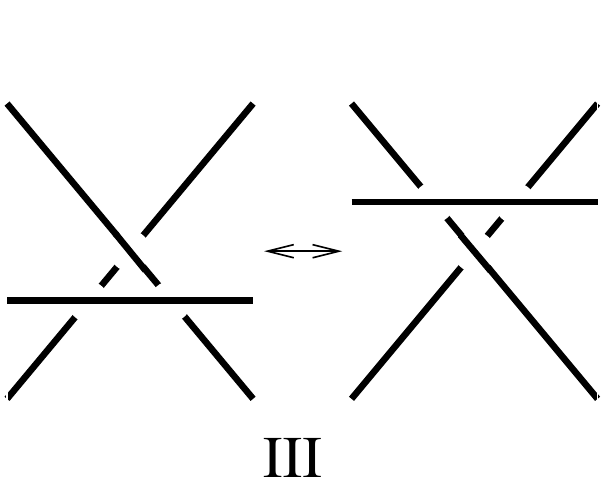}
\caption{Classical Reidemeister moves of type I, II, III.}
\label{fig:reidemeister_a}
\end{figure}
\begin{figure}[ht]
\includegraphics[height=0.18\textwidth]{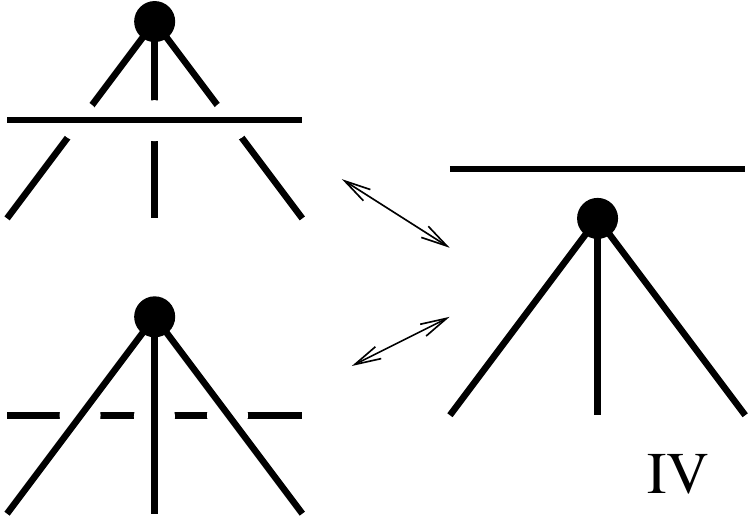}~~~~
\includegraphics[height=0.18\textwidth]{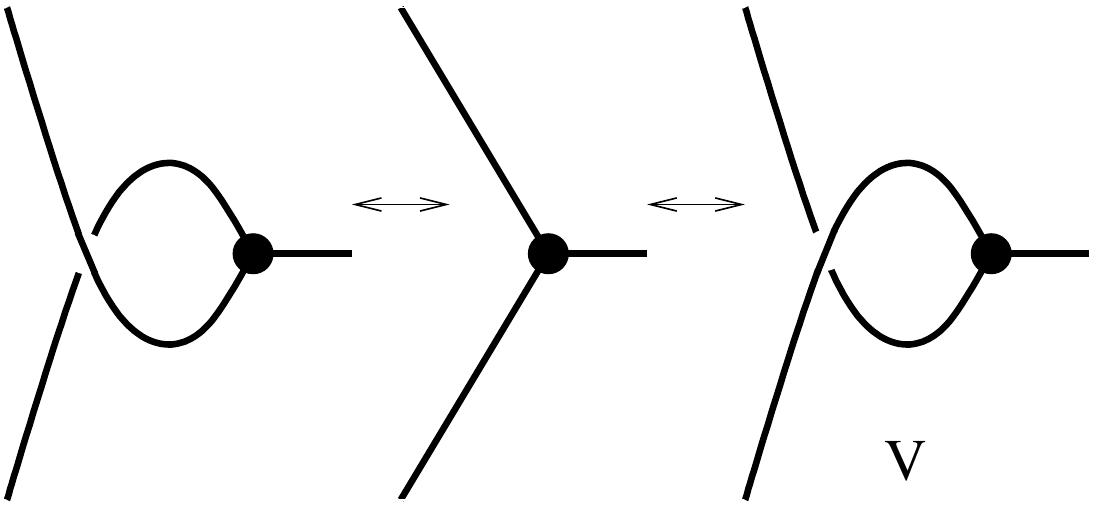}
\caption{Reidemeister moves IV and V involve a triple point.}
\label{fig:reidemeister_b}
\end{figure}
\begin{figure}[ht]
\includegraphics[height=0.18\textwidth]{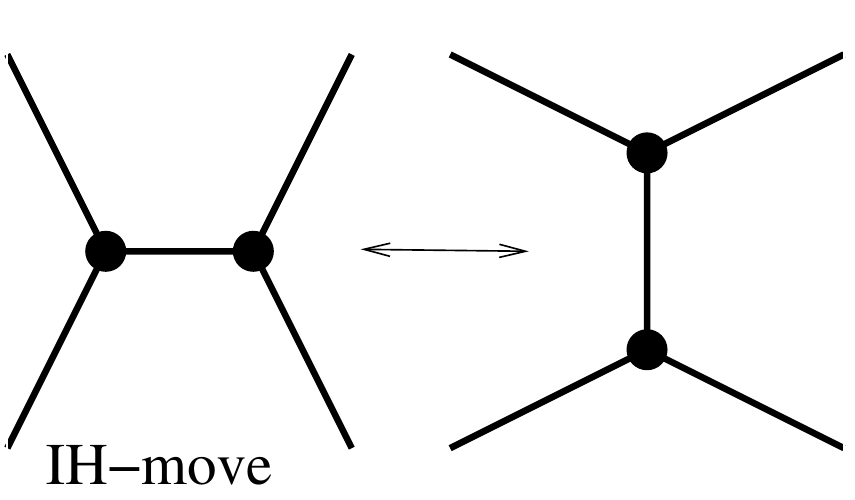}
\caption{The IH-move.}
\label{fig:IH-move}
\end{figure}

When analyzing the data from the code (Appendix \ref{sec:code}), 
it is more convenient to call a diagram IH-minimal
if the number of crossings cannot be reduced by 
generalized Reidemeister moves and IH moves, that is,
``minimal'' as a diagram of a \emph{handlebody link}, and 
to call a diagram R-minimal   
if the number of crossings 
cannot be reduced by generalized Reidemeister moves,
that is, ``minimal'' as a diagram of a \emph{spatial graph}. 
 
\nada{

\begin{definition}[\textbf{R-equivalence}]
Two diagrams are said to be 
\textbf{R-equivalent} if they can be deformed one into the other
with a finite number of local moves depiected in Fig.\ \ref{fig:reidemeister_a}
and Fig.\ \ref{fig:reidemeister_b}.
\end{definition}
In Fig. \ref{fig:reidemeister_a} we display the classical
Reidemeister moves I, II and III,
whereas Reidemeister moves IV and V (Fig.\ \ref{fig:reidemeister_b})
are introduced in order to take into account the presence of
trivalent nodes \cite{Ish:08}.
\draftGGG{insert reference where there are these
two new moves, and a theorem that says that these five moves are complete}

\begin{definition}[\textbf{$\op{IH}$-equivalent}]
Two diagrams are said to be \textbf{\op{IH}-equivalent} 
if one can be deformed one into the other
with a finite number of local moves depicted in Fig.\   \ref{fig:reidemeister_a}, Fig.\ \ref{fig:reidemeister_b} and 
Fig.\ \ref{fig:IH-move}.
\end{definition}
The special \op{IH} move \cite{Ish:08}
is introduced specifically 
in order to relate this type of equivalence to the
equivalence of corresponding handlebody links.

In both cases equivalence \emph{up to mirror symmetry} has the obvious meaning.

The IH move has a corresponding 
\emph{spatial} equivalent, so that we can define two
spatial graphs%
\footnote{A spatial graph in this context allows for the presence of circle components.
In this respect they are not really graphs in the usual sense.
\draftMMM{This footnote is not compatible with a definition using a simplicial complex,
unless we also introduce the notion of components and a circle component is a component
without trivalent vertices.}
\draftYYY{By circle components, do you mean those knot components? If so,
they are actually included. For instance, the boundary of a $2$-simplex.}
}%
to be \textbf{IH-equivalent} when they can be deformed in $\sphere$ one to
the other while also subjected to a finite number of sharp transitions that are locally
represented by (the spatial version of) an IH-move.

It is known \cite[Theorem $1$]{Ish:08}
that two handlebodies $H_1$ and $H_2$ corresponding
respectively to spatial graphs $G_1$ and $G_2$ are equivalent
if and only if $G_1$ and $G_2$ are IH equivalent.

Let $G_1$ and $G_2$ be two spatial graphs represented respectively
by diagrams $D_1$ and $D_2$; then
\begin{itemize}
\item[-]
$G_1$ and $G_2$ are
equivalent by ambient isotopy if and only if $D_1$ and $D_2$ are R-equivalent;
\draftMMM{In \cite{Mor:07} this is stated (in the special case of theta-curves) with a
reference to Kauffman, Invariants of graphs in three-space.  I remember that we cite
some other paper on this?}
\item[-]
$G_1$ and $G_2$
are IH-equivalent if  and only if $D_1$ and $D_2$ are IH-equivalent.
\end{itemize}

As a result the notion of IH-equivalence of diagrams is directly related to the notion of
equivalence by ambient isotopy of handlebody links.
}


\subsection{Non-split, irreducible handlebody links}
\begin{definition}[\textbf{Edge connectivity of a graph}]
The edge-connectivity of a graph is 
the minimum number of edges whose deletion disconnects
the graph.
\end{definition}
%

\begin{definition}[\textbf{Connectivity of a diagram}]
A diagram has $e$-connectivity if its underlying plane
graph has edge-connectivity $e$.
\end{definition}
\begin{definition}[\textbf{Split handlebody link}]
A handlebody link $\HL$ is split if there exists a $2$-sphere
$\Stwo\subset \sphere$ such that $\Stwo\cap \HL=\emptyset$ and
both components of the complement $\Compl{\Stwo}$ have non-trivial intersection with $\HL$.
\end{definition}  
\begin{definition}[\textbf{Reducible handlebody link}]\label{def:reducible}
A handlebody link $\HL$ is re\-du\-ci\-ble if
its complement admits a $2$-sphere $\Stwo$ such that 
$\Stwo\cap \HL$ is an incompressible disk in $\HL$;
otherwise it is irreducible.
\end{definition}
Note that $\Stwo$ in Definition \ref{def:reducible} 
factorizes $\HL$ into two handlebody links,
each called a \emph{factor} of the factorization of $\HL$.

A diagram with $0$-connectivity (resp.\ $1$-connectivity)
represents a split (resp.\ reducible) handlebody link,  
so only diagrams with connectivity
greater than $1$ are of interest to us; on the other hand, the connectivity of 
a diagram of a $(n,t)$-handlebody link with $t>0$ 
cannot exceed $3$. 

\nada{
\begin{definition}[\textbf{Type-2 \& Type-3}]\label{def:type-two-three}
A non-split, irreducible handlebody link is of \emph{type-2} 
if it admits a minimal
diagram with edge connectivity two; 
otherwise it is of \emph{type-$3$}.
\end{definition}
It follows that any minimal diagram $\mathcal{D}$ of a type-$3$ handlebody link has edge connectivity $e(\mathcal{D}) = 3$ due to the presence of trivalent nodes.
To understand the relation between type-$2$ and type-$3$,
}
Now, we recall the order-$2$ vertex connected sum 
between spatial graphs \cite{Mor:09a}, which is 
used to produce handlebody links represented by 
minimal diagrams with $2$-connectivity. 
A trivial ball-arc pair of a spatial graph $G$
is a $3$-ball $B$ with $G\cap B$ a trivial tangle in $B$; 
it is oriented if an orientation of $G\cap B$ is given.
\begin{definition}[\textbf{Knot sum}] 
Given two spatial graphs $G_1,G_2$ with oriented 
trivial ball-arc pairs $B_1,B_2$ of $G_1,G_2$, respectively,
their order-$2$ vertex connected sum $(G_1,B_1)\#(G_2,B_2)$ 
is a spatial graph obtained by removing the interiors of
$B_1,B_2$ and gluing the resulting manifolds
$\Compl{B_1}$ and $\Compl{B_2}$ by 
an orientation-reserving homoemorphism
\[h:\big(\partial (\Compl{B_1}\big),\partial(G_1\cap B_1))
\rightarrow (\partial \big(\Compl{B_2}\big),\partial (G_2\cap B_2)).\]
The notation $G_1\#G_2$ denotes the set
of order-$2$ vertex connected sums
of $G_1,G_2$ given by all possible trivial ball-arc pairs.
\end{definition} 
Since an order-$2$ vertex connected sum depends only
on the edges of $G_1,G_2$ intersecting with $B_1,B_2$
and their orientations, $G_1\# G_2$ 
is a finite set.
%
 

\nada{ 
It should be noted that this definition is \emph{not} equivalent to 
Definition \ref{def:decompoable_links}.
For practical purposes it is more convenient for us to require that the corresponding property in
terms of diagram (edge connectivity $2$) occurs in a minimal diagram.
If it is a decompoable handlebody link with no $2$-connective minimal diagram, 
then our search code will find this handlebody link as a type-3 
(or a type-1
in a configuration that produces a diagram with edge connectivity $3$.

\begin{definition}[type-3]\label{def:type-three}
Any $\SSS$ that is not of type 0, 1 or 2 is of type 3. \draftGGG{?}
\end{definition}

It follows that any minimal diagram $\mathcal{D}$ of $\SSS$ has edge connectivity $e(\mathcal{D}) = 3$.  
The edge connectivity cannot be larger than $3$ because of the presence of trivalent nodes,
$e \not= 0$ otherwise $\SSS$ would be of type-0,
$e \not= 1$ otherwise $\SSS$ would be reducible, hence of type-0 or type-1,
$e \not= 2$ otherwise $\SSS$ would be of type-2 (or type-0, or type-1).
}


\section{Uniqueness, non-splittability, and irreducibility}\label{sec:uniqueness}
Recall that, given a finite group $\mathsf{G}$,  
the Kitano-Suzuki invariant $ks_{\mathsf{G}}(\HL)$ of 
a handlebody link $\HL$ is the number of 
conjugate classes of homomorphisms from
$\pi_1(\Compl{\HL})$ to $\mathsf{G}$ \cite{KitSuz:12}.
Table \ref{tab:uniqueness} lists 
the invariants $ks_{\mathsf{A}_4}(\HL)$ and $ks_{\mathsf{A}_5}(\HL)$
of each handlebody link $\HL$ in Table \ref{tab:handlebodylinks},
$\mathsf{A}_k$ being the alternating group on $k$ letters,
as well as an upper bound
of the rank of $\pi_1(\Compl{\HL})$
computed by Appcontour \cite{appcontour}.

The entry ``split'' refers to 
the split handlebody link $\op{HL}$ 
given by a trivial handlebody knot 
and an unknotted solid torus;
the entry ``fake $6_5$'' is the split handlebody link consisting of the handlebody knot 
$\op{HK}4_1$, 
Ishii-Kishimoto-Suzuki-Moriuchi's $4_1$ in \cite{IshKisMorSuz:12}, and an unknotted solid torus;
%
the entry ``fake $6_{11}$'' is $6_{11}$ in Table \ref{tab:handlebodylinks} with one of the bottom crossings reversed, thus making the lower solid torus component split off.

\begin{table}[h!]
  \begin{center}
    \caption{Kitano-Suzuki invariant for entries in Table \ref{tab:handlebodylinks}.
}
    \label{tab:uniqueness}
    \begin{tabular}{c|c|r|r|r|} 
      \textbf{handlebody link} & \textbf{components} & $ks_{\mathsf{A}_4}$ & $ks_{\mathsf{A}_5}$& rank\\
      \hline
      split & trivial + unknot & 178 & 3675   & 3\\
      $4_1$ & trivial + unknot & 114 & 600    & 3\\
      $5_1$ & trivial + unknot & 98 & 660     & $\leq$ 4\\
      $6_1$ & trivial + unknot & 90 & 600     & 3\\
      $6_2$ & trivial + unknot & 106 & 689    & 3\\
      $6_3$ & trivial + unknot & 90 & 469     & 3\\
      $6_4$ & HK$4_1$ + unknot & 106 & 689    & 3\\
      $6_5$ & HK$4_1$ + unknot & 210 &        & $\leq$ 4\\
      fake $6_5$ & HK$4_1$ + unknot & 274 &   &\\
      $6_6$ & trivial + unknot & 130 & 1380   & 3\\
      $6_7$ & trivial + unknot & 98 & 597     & $\leq$ 4\\
      $6_8$ & trivial + unknot  & 114 & 1401  & 3\\
      $6_9$ & trivial + 2 unknots & 310 & 1841    & 4\\
      $6_{10}$ & trivial + 2 unknots & 326 &      & 4\\
      $6_{11}$ & trivial + 2 unknots & 486 & 5876 & 4\\
      fake $6_{11}$ & trivial + 2 unknots & 694 &  &\\
      $6_{12}$ & trivial + 2 unknots  & 502 & 5883 & 4\\
      $6_{13}$ & trivial + 2 unknots  & 822 &      & 4\\
      $6_{14}$ & trivial + 2 unknots  & 486 & 5876 & 4\\
      $6_{15}$ & trivial + 3 unknots  & 1242 &     & 5\\
      \hline
    \end{tabular}
  \end{center}
\end{table}

\begin{theorem}[\textbf{Uniqueness}]\label{teo:uniqueness}
Entries in Table \ref{tab:handlebodylinks} are all inequivalent.
\end{theorem}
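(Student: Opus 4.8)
The plan is to separate the seventeen entries of Table \ref{tab:handlebodylinks} using invariants that are insensitive to both ambient isotopy and mirror image, reading off their values from Table \ref{tab:uniqueness}. The Kitano-Suzuki invariant $ks_{\mathsf{G}}(\HL)$ depends only on $\pi_1(\Compl{\HL})$, hence only on the homeomorphism type of the complement, so it is an ambient isotopy invariant; moreover a reflection of $\sphere$ carries $\Compl{\HL}$ homeomorphically onto $\Compl{r\HL}$, whence $\pi_1(\Compl{\HL})\cong\pi_1(\Compl{r\HL})$ and $ks_{\mathsf{G}}(\HL)=ks_{\mathsf{G}}(r\HL)$. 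Thus $ks_{\mathsf{A}_4}$ and $ks_{\mathsf{A}_5}$ are invariants up to ambient isotopy and mirror image. Two further invariants of the same flavour are the number $n$ of components and, since an ambient isotopy (resp.\ a reflection) of the link restricts to one of each component, the unordered collection of component handlebody-knot types, each taken up to mirror image.

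Reading the $ks_{\mathsf{A}_4}$ column shows that this single invariant already takes pairwise distinct values on all table entries except for the five coincidences $\{6_1,6_3\}$, $\{5_1,6_7\}$, $\{6_2,6_4\}$, $\{4_1,6_8\}$, and $\{6_{11},6_{14}\}$, with common values $90,98,114,106,486$. For three of these the invariant $ks_{\mathsf{A}_5}$ already differs ($600\neq 469$ on $\{6_1,6_3\}$, $660\neq 597$ on $\{5_1,6_7\}$, and $600\neq 1401$ on $\{4_1,6_8\}$), so those pairs are inequivalent. For $\{6_2,6_4\}$ both Kitano-Suzuki invariants coincide ($106$ and $689$), but the genus-$2$ component of $6_2$ is a trivial handlebody knot whereas that of $6_4$ is $\op{HK}4_1$, which is non-trivial even up to mirror image; hence the unordered collections of component types differ and $6_2$, $6_4$ are inequivalent.

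The single remaining pair $\{6_{11},6_{14}\}$ is the crux, and I expect it to be the main obstacle: here $ks_{\mathsf{A}_4}=486$, $ks_{\mathsf{A}_5}=5876$, the number of components ($n=3$), and the component types (a trivial handlebody knot together with two unknotted solid tori) all agree, so the homomorphism-counting invariants cannot detect any difference and a genuinely finer tool is needed. I would distinguish them by the linking-number invariant of Mizusawa \cite{Miz:13}: the array of linking numbers among the two solid tori and the two handles of the genus-$2$ component, read off from the minimal diagrams in Table \ref{tab:handlebodylinks}, is invariant under ambient isotopy and (up to the overall sign change induced by reflection and a relabeling of the solid tori) under mirror image, and I expect it to realize inequivalent patterns for $6_{11}$ and $6_{14}$. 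Everything outside this last pair follows by merely inspecting the two $ks$ columns together with the component column of Table \ref{tab:uniqueness}; combining the three steps shows that all seventeen entries are pairwise inequivalent up to ambient isotopy and mirror image.
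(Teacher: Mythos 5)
Your treatment of everything except the last pair coincides with the paper's: the two Kitano--Suzuki invariants separate all entries except $(6_2,6_4)$ and $(6_{11},6_{14})$ (your three other $ks_{\mathsf{A}_4}$-coincidences are indeed resolved by $ks_{\mathsf{A}_5}$), and $(6_2,6_4)$ is settled exactly as in the paper by the knot type of the genus-$2$ component (trivial versus $\op{HK}4_1$). The problem is the pair $(6_{11},6_{14})$, which you correctly identify as the crux but then do not actually resolve: ``I expect it to realize inequivalent patterns'' is a conjecture, not a computation, and there is strong reason to believe the expectation is false. In both $6_{11}$ and $6_{14}$ the pairwise linking data appear to be identical: exactly one solid torus has non-trivial (in fact unit) linking number with the genus-$2$ component --- this is implicit in the paper's phrase ``\emph{the} solid torus component having a non-trivial linking number with the genus~$2$ handlebody component,'' applied uniformly to both links --- the other solid torus has trivial linking with it, and the two solid tori are mutually homologically unlinked. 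Had the linking arrays differed, the authors (who use Mizusawa's invariant freely elsewhere, e.g.\ for unsplittability) would simply have compared them; instead they note that the two links even have homeomorphic complements (Remark \ref{rem:pi_1_cannot_distinguish}).

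What the paper actually does is strictly finer than pairwise linking numbers: it uses the linking data only to \emph{canonically select} which solid torus to delete, and then identifies the isotopy type of the remaining $2$-component handlebody link --- $4_1$ for $6_{14}$ versus the trivial split handlebody link for $6_{11}$. The difference between the two links is thus a higher-order linking phenomenon (the ``zero-linking'' solid torus stays essentially entangled with the genus-$2$ component in $6_{14}$ but falls off in $6_{11}$), which is exactly the kind of information a matrix of pairwise linking numbers cannot detect. To repair your argument you would either have to carry out the Mizusawa computation and discover it is inconclusive, or replace that step with a sublink-type invariant such as the one the paper uses (delete the canonically determined component and compare the resulting $(2,1)$-handlebody links, which are already distinguished earlier in the table).
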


\begin{proof}
All entries in Table \ref{tab:handlebodylinks} except for 
the pairs $(6_2,6_4)$ and $(6_{11},6_{14})$
are distinguished by comparing their 
$ks_{\mathsf{A}_4}$ and $ks_{\mathsf{A}_5}$ invariants 
(shown in Table \ref{tab:uniqueness}).
%
On the other hand, $6_2$ and $6_4$ cannot be equivalent 
because the removal of the ``unknot'' component produces inequivalent
handlebody knots: one being trivial, the other being $\op{HK}4_1$.
Similarly, one can distinguish $6_{11}$ and $6_{14}$ 
by removing the solid torus component having a non-trivial linking number
with the genus $2$ handlebody component \cite{Miz:13} in
each of them, and observing that, for $6_{14}$, 
the resulting handlebody link 
is $4_1$, whereas for $6_{11}$, we get 
the trivial split handlebody link.
%
\end{proof}

\begin{remark}\label{rem:pi_1_cannot_distinguish}
The pairs $(6_2, 6_4)$ and $(6_{11}, 6_{14})$ in fact 
have homeomorphic complements, and 
hence the fundamental group cannot discriminate.
Fig.\ \ref{fig:links_6_2_6_4} and \ref{fig:links_6_11_6_14}
illustrate how to obtain the complements of $6_2$ and $6_{11}$ 
from $6_4$ and\ $6_{14}$, respectively, 
via $3$-dimensional Dehn-twists (indicated by arrows).
\end{remark}
\begin{remark}\label{rem:figure_eight_puzzle}
$6_5$ viewed as a diagram of a spatial graph 
is the notorious figure eight puzzle devised by Steward Coffin \cite{BotVan:75}. The goal of the puzzle is to free the circle 
component from the knotted handcuff graph, i.e.\ to obtain the fake $6_5$ as a spatial graph. 
The impossibility of solving the puzzle then follows from
computing $ks_{\mathsf{A}_4}(\bullet)$ of $6_5$ and fake $6_5$ (Table \ref{tab:uniqueness}). See \cite{Ber:03}, \cite{Mel:06} 
for other proofs of this.
\end{remark}
\begin{figure}[ht]
\def\svgwidth{0.95\columnwidth}
\begingroup%
  \makeatletter%
  \providecommand\color[2][]{%
    \errmessage{(Inkscape) Color is used for the text in Inkscape, but the package 'color.sty' is not loaded}%
    \renewcommand\color[2][]{}%
  }%
  \providecommand\transparent[1]{%
    \errmessage{(Inkscape) Transparency is used (non-zero) for the text in Inkscape, but the package 'transparent.sty' is not loaded}%
    \renewcommand\transparent[1]{}%
  }%
  \providecommand\rotatebox[2]{#2}%
  \newcommand*\fsize{\dimexpr\f@size pt\relax}%
  \newcommand*\lineheight[1]{\fontsize{\fsize}{#1\fsize}\selectfont}%
  \ifx\svgwidth\undefined%
    \setlength{\unitlength}{3401.57480315bp}%
    \ifx\svgscale\undefined%
      \relax%
    \else%
      \setlength{\unitlength}{\unitlength * \real{\svgscale}}%
    \fi%
  \else%
    \setlength{\unitlength}{\svgwidth}%
  \fi%
  \global\let\svgwidth\undefined%
  \global\let\svgscale\undefined%
  \makeatother%
  \begin{picture}(1,0.13333333)%
    \lineheight{1}%
    \setlength\tabcolsep{0pt}%
    \put(0,0){\includegraphics[width=\unitlength,page=1]{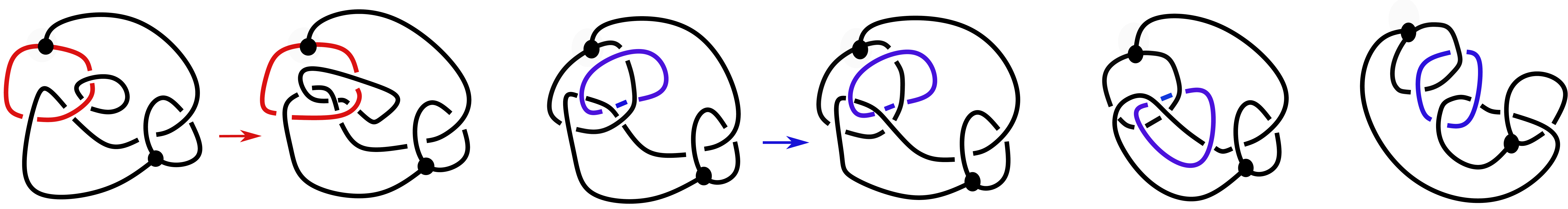}}%
    \put(0.3149972,0.0491486){\color[rgb]{0,0,0}\makebox(0,0)[lt]{\lineheight{1.25}\smash{\begin{tabular}[t]{l}$\simeq$\end{tabular}}}}%
    \put(0.66299296,0.04937967){\color[rgb]{0,0,0}\makebox(0,0)[lt]{\lineheight{1.25}\smash{\begin{tabular}[t]{l}$\simeq$\end{tabular}}}}%
    \put(0.83484731,0.04849773){\color[rgb]{0,0,0}\makebox(0,0)[lt]{\lineheight{1.25}\smash{\begin{tabular}[t]{l}$\simeq$\end{tabular}}}}%
  \end{picture}%
\endgroup%

\caption{$6_2$ and $6_4$ have homeomorphic complements.}
\label{fig:links_6_2_6_4}
\end{figure}
\begin{figure}[ht]
\def\svgwidth{0.85\columnwidth}
\begingroup%
  \makeatletter%
  \providecommand\color[2][]{%
    \errmessage{(Inkscape) Color is used for the text in Inkscape, but the package 'color.sty' is not loaded}%
    \renewcommand\color[2][]{}%
  }%
  \providecommand\transparent[1]{%
    \errmessage{(Inkscape) Transparency is used (non-zero) for the text in Inkscape, but the package 'transparent.sty' is not loaded}%
    \renewcommand\transparent[1]{}%
  }%
  \providecommand\rotatebox[2]{#2}%
  \newcommand*\fsize{\dimexpr\f@size pt\relax}%
  \newcommand*\lineheight[1]{\fontsize{\fsize}{#1\fsize}\selectfont}%
  \ifx\svgwidth\undefined%
    \setlength{\unitlength}{2806.2992126bp}%
    \ifx\svgscale\undefined%
      \relax%
    \else%
      \setlength{\unitlength}{\unitlength * \real{\svgscale}}%
    \fi%
  \else%
    \setlength{\unitlength}{\svgwidth}%
  \fi%
  \global\let\svgwidth\undefined%
  \global\let\svgscale\undefined%
  \makeatother%
  \begin{picture}(1,0.16161616)%
    \lineheight{1}%
    \setlength\tabcolsep{0pt}%
    \put(0,0){\includegraphics[width=\unitlength,page=1]{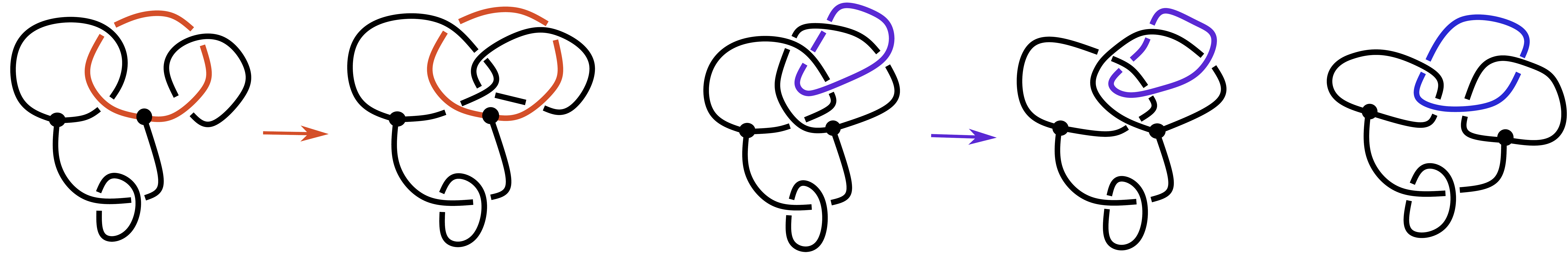}}%
    \put(0.39224496,0.06755226){\color[rgb]{0,0,0}\makebox(0,0)[lt]{\lineheight{1.25}\smash{\begin{tabular}[t]{l}$\simeq$\end{tabular}}}}%
    \put(0.80063883,0.0695534){\color[rgb]{0,0,0}\makebox(0,0)[lt]{\lineheight{1.25}\smash{\begin{tabular}[t]{l}$\simeq$\end{tabular}}}}%
  \end{picture}%
\endgroup%

\caption{$6_{11}$ and $6_{14}$ have homeomorphic complements.}
\label{fig:links_6_11_6_14}
\end{figure}

\begin{theorem}[\textbf{Unsplittability}]\label{teo:unsplittability}
Entries in Table \ref{tab:handlebodylinks} are all unsplittable.
\end{theorem}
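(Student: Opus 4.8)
The plan is to rule out splittability for every entry by comparing its Kitano--Suzuki invariant $ks_{\mathsf{A}_4}$ with that of the only split handlebody links it could possibly be. The structural starting point is that if $\HL$ is split by a $2$-sphere into $\HL_1\sqcup\HL_2$, then van Kampen gives $\pi_1(\Compl{\HL})\cong\pi_1(\Compl{\HL_1})\ast\pi_1(\Compl{\HL_2})$ (the separating sphere is simply connected), and, iterating, $\HL$ is ambient isotopic to the distant union of its split-prime factors, a decomposition that is unique up to isotopy in $\sphere$. Each split-prime factor is a non-split handlebody link whose components form a subset of the components of $\HL$. Since the genus-$2$ handlebody knot type and the solid-torus types occurring in each entry are already pinned down in the uniqueness analysis and recorded in the ``components'' column of Table \ref{tab:uniqueness}, the list of candidate split links is short and explicit.

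To trim this list I would first invoke Mizusawa's linking numbers \cite{Miz:13}: two components with non-zero linking number must lie on the same side of any splitting sphere. Whenever the induced ``linking graph'' on the components is connected, no non-trivial split exists and $\HL$ is non-split outright. For the remaining entries the linking numbers force any hypothetical splitting to peel off a specific unknotted solid torus, leaving exactly the split candidates ``split'', ``fake $6_5$'' and ``fake $6_{11}$'' (and, where the linking data allow it, the fully distant union of all components), whose invariants are computed directly from their diagrams exactly as the honest entries were.

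It then remains to compare invariants. For each entry $\HL$ I would read off $ks_{\mathsf{A}_4}(\HL)$ from Table \ref{tab:uniqueness} and check that it differs from $ks_{\mathsf{A}_4}$ of every admissible split candidate: the trivial$+$unknot entries $4_1,5_1,6_1,6_2,6_3,6_6,6_7,6_8$ all have $ks_{\mathsf{A}_4}\neq 178$ (the value for ``split''); the $\op{HK}4_1+$unknot entries $6_4,6_5$ have $ks_{\mathsf{A}_4}\in\{106,210\}$, neither equal to $274$ (``fake $6_5$''); and the three- and four-component entries differ from $694$ (``fake $6_{11}$'') and from the relevant fully split unions. Because $ks_{\mathsf{A}_4}$ is an ambient-isotopy invariant, $\HL$ cannot be isotopic to any of these split links, whence $\HL$ is non-split.

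The main obstacle, and where the genuine work lies, is the multi-component range $6_9$--$6_{15}$: one must enumerate all partitions of the components compatible with the linking numbers, attach to each the corresponding split-prime candidate, and verify both that its $ks_{\mathsf{A}_4}$ is computable and that it is distinct from $ks_{\mathsf{A}_4}(\HL)$. A subsidiary point requiring care is the rigorous justification that a split handlebody link is determined up to ambient isotopy by its split-prime factors, so that equality of $ks_{\mathsf{A}_4}$ values would indeed be forced in the split case; this uniqueness of the split decomposition in $\sphere$ is standard but should be explicitly cited or proved before the invariant comparison is applied.
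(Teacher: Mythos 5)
Your proposal is correct and uses essentially the same two tools as the paper: Mizusawa's linking numbers to handle the entries whose components pairwise link nontrivially, and comparison of $ks_{\mathsf{A}_4}$ (and $ks_{\mathsf{A}_5}$) against the explicitly constructed split candidates ``split'', ``fake $6_5$'' and ``fake $6_{11}$'' for the rest. The only notable difference is organizational: where you propose a systematic enumeration of component partitions for the multi-component entries, the paper disposes of $6_{12}$ and $6_{13}$ more cheaply by noting that a splitting sphere for either would also split the sublink $4_1$, which is already known to be non-split.
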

\begin{proof}
In most cases $(5_1,6_1,6_2,6_3,6_4,6_7,6_9,6_{10})$ 
unsplittability follows by computing the linking number 
\cite{Miz:13} between pairs of components of a handlebody link.
There are a few cases where the linking number vanishes, and
we deal with these cases by 
computing the $ks_{\mathsf{A}_4}$- and 
$ks_{\mathsf{A}_5}$-invariants of the corresponding split
handlebody links (Table \ref{tab:uniqueness}).

If $6_5$ were split, then $6_5$ would be 
equivalent to the fake $6_5$ 
but this is not possible by Table \ref{tab:uniqueness}. 
In the case of $4_1$, $6_6$, $6_8$, if any of them were split, 
than it would be equivalent to ``split'' in Table \ref{tab:uniqueness}, but that is not the case.
A similar argument can be applied to $6_{11}$ and $6_{14}$: 
if one of them were split, it would be equivalent to the fake $6_{11}$, 
in contradiction to Table \ref{tab:uniqueness}. 
Lastly, we observe that $6_{12}$ and $6_{13}$ are non-split, for otherwise $4_1$ would be split.
\end{proof}

%
%

Below we recall the irreducibility test developed in \cite{BePaWa:20}.
A $r$-generator link is a link whose knot group, 
the fundamental group of its complement, is of rank $r$.  
\begin{lemma}\label{lemma:w_trivial_knot_A4_A5}
If the trivial knot is a factor of some 
factorization of a reducible $(n,1)$-handlebody link $\HL$,
then 
\begin{equation}\label{eq:trivial_knot_A4_A5}
12 \mid ks_{\mathsf{A}_4}(\HL)+6\cdot 3^n+2\cdot 4^n\quad \textbf{and}\quad
60 \mid ks_{\mathsf{A}_5}(\HL)+14\cdot 4^n+19\cdot 3^n +22\cdot 5^n.
\end{equation}  
\end{lemma}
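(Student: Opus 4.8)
The plan is to convert the hypothesis into a statement about $\pi_1(\Compl{\HL})$, read off a clean counting formula for the Kitano--Suzuki invariant from the resulting free-product structure, and then verify the two divisibilities one prime at a time.

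First I would pin down $\pi_1(\Compl{\HL})$. Since $\HL$ is a reducible $(n,1)$-handlebody link whose only component of genus $\geq 2$ is the genus-$2$ piece $H$, the reducing sphere $\Stwo$ must meet $\HL$ in an essential separating disk $D$ of $H$, splitting $H$ into two solid tori. The hypothesis that the trivial knot is a factor forces one side of $\Stwo$ to carry exactly one of these solid tori, unknotted and unlinked, that is, a trivial knot $U$; the other side carries the second solid torus together with all $n-1$ tori, so the complementary factor is an $n$-component classical link $L$. Capping $D$ shows that $\Stwo$ meets $\Compl{\HL}$ in a single (simply connected) disk and that the two sides recover the complements of $U$ and of $L$ up to homotopy, so van Kampen gives $\pi_1(\Compl{\HL})\cong \Gamma * \Z$, where $\Gamma:=\pi_1(\Compl{L})$ and the free $\Z$ is $\pi_1(\Compl{U})$. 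The only property of $\Gamma$ I will use is that, being a link group, $H_1(\Gamma)\cong \Z^n$, whence $|\op{Hom}(\Gamma,A)|=|A|^n$ for every abelian group $A$.

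Next I would establish the counting formula. Writing $ks_{\mathsf{G}}(\HL)$ as the number of orbits of $G$ acting by simultaneous conjugation on $\op{Hom}(\pi_1(\Compl{\HL}),G)=\op{Hom}(\Gamma,G)\times G$, Burnside's lemma gives $ks_{\mathsf{G}}(\HL)=\tfrac1{|G|}\sum_{g}|\op{Hom}(\Gamma,C_G(g))|\,|C_G(g)|$, and regrouping over conjugacy classes collapses this to $ks_{\mathsf{G}}(\HL)=\sum_{[g]}|\op{Hom}(\Gamma,C_G(g))|$. For $G=\mathsf A_4$ every non-identity centralizer is abelian ($V_4$ for an involution, $\Z/3$ for a $3$-cycle), so those terms contribute $4^n+2\cdot 3^n$ and $ks_{\mathsf A_4}(\HL)=|\op{Hom}(\Gamma,\mathsf A_4)|+4^n+2\cdot3^n$; for $G=\mathsf A_5$ the non-identity centralizers are $V_4$, $\Z/3$, and twice $\Z/5$, giving $ks_{\mathsf A_5}(\HL)=|\op{Hom}(\Gamma,\mathsf A_5)|+4^n+3^n+2\cdot5^n$. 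Substituting, the two claims reduce to $12 \mid |\op{Hom}(\Gamma,\mathsf A_4)|+3\cdot4^n+8\cdot3^n$ and $60 \mid |\op{Hom}(\Gamma,\mathsf A_5)|+15\cdot4^n+20\cdot3^n+24\cdot5^n$.

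Finally I would prove these prime by prime. For an odd prime $p\in\{3,5\}$ dividing $|G|$, a Sylow $p$-subgroup $P$ is cyclic with $C_G(P)=P$, so conjugation by $P$ on $\op{Hom}(\Gamma,G)$ has fixed set $\op{Hom}(\Gamma,P)$ of size $p^n$, and the $p$-group orbit count gives $|\op{Hom}(\Gamma,G)|\equiv p^n\equiv 0\pmod p$; the remaining summands carry a visible factor of $p$, so both congruences hold mod $3$ and mod $5$. The delicate point, which I expect to be the main obstacle, is the factor of $4$, where the naive Sylow argument only controls things mod $2$. Here I would exploit the special feature of $\mathsf A_4$ and $\mathsf A_5$ that a Sylow $2$-subgroup $Q$ is a Klein four-group with $C_G(a)=Q$ for every involution $a\in Q$: if a homomorphism is fixed by some non-trivial element of $Q$, its image lies in $C_G(a)=Q$ and it is then fixed by all of $Q$, so the $Q$-action on $\op{Hom}(\Gamma,G)$ has no orbit of size $2$. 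Hence $|\op{Hom}(\Gamma,G)|=|\op{Hom}(\Gamma,Q)|+4k=4^n+4k\equiv 0\pmod 4$, and since $8,20,24\equiv0\pmod4$ the other terms vanish mod $4$ as well. Combining the three congruences by the Chinese remainder theorem yields the divisibilities by $12$ and $60$.
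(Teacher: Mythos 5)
The paper does not actually prove this lemma: it is recalled verbatim from \cite{BePaWa:20} (``Below we recall the irreducibility test developed in \cite{BePaWa:20}''), so there is no in-text argument to compare against. Your derivation is correct and self-contained: the identification $\pi_1(\Compl{\HL})\cong\Z*\Gamma$ with $\Gamma$ an $n$-component link group, the conjugacy-class count $ks_{\mathsf{G}}(\HL)=\sum_{[g]}|\op{Hom}(\Gamma,C_{\mathsf{G}}(g))|$ via Burnside, the centralizer data for $\mathsf{A}_4$ and $\mathsf{A}_5$, and the prime-by-prime congruences --- including the observation that the Klein four-group action on $\op{Hom}(\Gamma,\mathsf{G})$ has no orbits of size $2$, which is exactly what upgrades divisibility by $2$ to divisibility by $4$ --- all check out, and they reduce precisely to the stated divisibilities by $12$ and $60$. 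The one step you assert rather than justify is that the reducing sphere must meet the genus-$2$ component in a \emph{separating} essential disk; this does hold, because $\partial(\Stwo\cap\HL)$ bounds the complementary disk of $\Stwo$ inside $\Compl{\HL}$ and is therefore null-homologous there, which rules out both meridian disks of solid-torus components and non-separating disks of the genus-$2$ piece, whose boundaries link a core curve of $\HL$ once.
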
 
\begin{lemma}\label{lemma:w_2_generator_knot_A4}
If a $2$-generator knot is factor of some factorization of 
a reducible $(n,1)$-handlebody link $\HL$, then 
\begin{equation}\label{eq:2_generator_knot_A4}
12+24p \mid ks_{\mathsf{A}_4}(\HL)+(6+16p)\cdot 3^n+ (2+6p)\cdot 4^n,
\hspace*{.3em}
\textbf{where $p=0$ or $1$}.
\end{equation}
 
\end{lemma}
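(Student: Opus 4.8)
The plan is to follow the strategy used for Lemma~\ref{lemma:w_trivial_knot_A4_A5}: reduce the statement to a count of homomorphisms out of a free product, and then extract the congruence by elementary arithmetic. First I would record the group-theoretic shape of the complement. If $\HL$ is reducible with a knot factor (Definition~\ref{def:reducible}), then a reducing sphere $\Stwo$ exhibits $\HL$ as an order-$1$ connected sum (Definition~\ref{def:one_sum}) of the knot $K$---a solid torus split off the genus-$2$ handlebody---and the complementary link $L$, which has $n$ components and genus $n$; hence $H_1(\Compl{K})\cong\Z$ and $H_1(\Compl{L})\cong\Z^n$. Since $\Stwo\cap\Compl{\HL}$ is a single, hence simply connected, disk, van Kampen yields a free product
\[
\pi_1(\Compl{\HL})\;\cong\;\pi_1(\Compl{K})\ast\pi_1(\Compl{L})\;=:\;\Gamma_K\ast\Gamma_L .
\]

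Next I would compute $ks_{\mathsf{A}_4}(\HL)$ via Burnside's orbit count. As $ks_{\mathsf{A}_4}$ counts conjugacy classes of homomorphisms \cite{KitSuz:12} and $\op{Hom}(\Gamma_K\ast\Gamma_L,\mathsf{A}_4)=\op{Hom}(\Gamma_K,\mathsf{A}_4)\times\op{Hom}(\Gamma_L,\mathsf{A}_4)$ with $\mathsf{A}_4$ acting by simultaneous conjugation, a pair is fixed by $g$ exactly when both images lie in the centralizer $C(g)$. Grouping the twelve elements of $\mathsf{A}_4$ into the three conjugacy classes---with centralizers $\mathsf{A}_4$ (order $12$), the Klein four group $V$ (order $4$, three elements), and a cyclic $\Z/3$ (order $3$, eight elements)---Burnside gives
\[
ks_{\mathsf{A}_4}(\HL)=\tfrac{1}{12}\Big(a_Ka_L+3\,v_Kv_L+8\,c_Kc_L\Big),
\]
where $a_\bullet,v_\bullet,c_\bullet$ denote $|\op{Hom}(\Gamma_\bullet,\cdot)|$ into $\mathsf{A}_4$, $V$, $\Z/3$ respectively.

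The third step evaluates these counts. Homomorphisms into the abelian groups $V$ and $\Z/3$ factor through the abelianization, so $v_K=4$, $c_K=3$ and $v_L=4^n$, $c_L=3^n$. For the $\mathsf{A}_4$-targets I would write $a_K=12+s$ and, by inclusion--exclusion over the subgroup lattice of $\mathsf{A}_4$ (again using only the abelianizations for the proper subgroups), $a_L=4^n+4\cdot3^n-4+S$, where $s=|\op{Surj}(\Gamma_K,\mathsf{A}_4)|$ and $S=|\op{Surj}(\Gamma_L,\mathsf{A}_4)|$. Since $\mathsf{A}_4$ has trivial centre, conjugation acts freely on the set of surjections, whence $12\mid s$ and $12\mid S$; combined with $4^n+4\cdot3^n-4\equiv0\pmod{12}$ this gives $12\mid a_L$.

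Finally I would assemble and reduce. Substituting the above yields
\[
ks_{\mathsf{A}_4}(\HL)=\Big(1+\tfrac{s}{12}\Big)\big(4^n+4\cdot3^n-4+S\big)+4^n+2\cdot3^n .
\]
Adding the correction term $(6+16p)3^n+(2+6p)4^n$, the pure powers collect as $3(1+2p)4^n+8(1+2p)3^n$, and for $n\ge1$ each summand is divisible by $12(1+2p)=12+24p$; the surviving term $(1+\tfrac{s}{12})(4^n+4\cdot3^n-4+S)$ is a multiple of $12$. This already proves the divisibility with $p=0$, and the sharper instance $p=1$ holds precisely when $36$ divides this surviving term. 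The only property of $K$ used is $H_1(\Compl{K})\cong\Z$, so the computation is identical for any knot factor, and the two admissible values of $p$ correspond to whether this term acquires the extra factor of $3$. I expect the main obstacle to be the bookkeeping of the third step---correctly identifying the two factors and establishing $a_L=4^n+4\cdot3^n-4+S$ together with the freeness giving $12\mid s,S$; the topological decomposition and the Burnside step are routine.
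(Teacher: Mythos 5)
First, a caveat on the comparison itself: the paper does not prove this lemma --- it is recalled verbatim from \cite{BePaWa:20} (``Below we recall the irreducibility test developed in \cite{BePaWa:20}''), so there is no in-paper proof to measure you against. On the merits, your computational skeleton is certainly the intended one, and the identities check out: with your notation ($a_L=|\op{Hom}(\pi_1(\Compl{L}),\mathsf{A}_4)|=4^n+4\cdot 3^n-4+S$ and $s$ the number of surjections of the knot group onto $\mathsf{A}_4$), the free-product/Burnside computation gives
\[
ks_{\mathsf{A}_4}(\HL)+(6+16p)\cdot 3^n+(2+6p)\cdot 4^n
=\tfrac{12+s}{12}\,a_L+(1+2p)\bigl(3\cdot 4^n+8\cdot 3^n\bigr),
\]
where $12\mid a_L$ and $12(1+2p)$ divides the second summand for $n\ge 1$.

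The gap is in the last step. You never identify what $p$ is, and you never use the $2$-generator hypothesis --- indeed you remark that only $H_1(\Compl{K})\cong\Z$ enters, which should have been a warning sign. What you actually establish is the $p=0$ instance (true for \emph{any} knot factor, since $12\mid a_L$), while the $p=1$ instance is left as ``holds precisely when $36$ divides the surviving term,'' which proves nothing. The missing content is that $p$ is an invariant of the factor: since $\op{Aut}(\mathsf{A}_4)\cong S_4$ has trivial pointwise stabilizers on surjections, $24\mid s$, and setting $p:=s/24$ turns the surviving term into $(1+2p)\,a_L$, divisible by $12(1+2p)=12+24p$; note that $12+24p=|\op{Hom}(\pi_1(\Compl{K}),\mathsf{A}_4)|$, which is why the modulus carries information beyond the trivial-knot case of Lemma \ref{lemma:w_trivial_knot_A4_A5}. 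The hypothesis that $K$ is a $2$-generator knot is then needed precisely to confine this $p$ to $\{0,1\}$, i.e.\ to show that a $2$-generator knot group admits at most one surjection onto $\mathsf{A}_4$ up to automorphism (using that a meridian normally generates the knot group and so must map to a $3$-cycle); compare Lemma \ref{lemma:w_2_generatore_link_A4}, where a $2$-generator $2$-component link allows $p$ up to $4=96/24$, $96$ being the number of generating pairs of $\mathsf{A}_4$. That counting argument is the one non-formal input of the lemma and is entirely absent from your proposal.
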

\begin{lemma}\label{lemma:w_2_generatore_link_A4}
If a $2$-component, $2$-generator link is a factor of
some factorization of 
a reducible $(n,1)$-handlebody link $\HL$, then 
\begin{equation}\label{eq:2_generator_link_A4}
48+24p \mid ks_{\mathsf{A}_4}(\HL)+(26+16p)\cdot 3^{n-1} + (8+6p)\cdot 4^{n-1},\hspace*{.3em}\textbf{where $p=0,1,2,3$ or $4$.}
\end{equation} 
 
\end{lemma}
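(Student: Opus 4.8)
The plan is to read off the factorization coming from reducibility at the level of fundamental groups, and then to count homomorphisms into $\mathsf{A}_4$ by a Burnside argument.

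First I would make the factorization explicit. By Definition \ref{def:reducible} a reducing $2$-sphere $\Stwo$ meets $\HL$ in an incompressible (hence essential) disk $D$. Since $\Stwo=D\cup(\Stwo\setminus\op{int}D)$ separates $\sphere$, the disk $D$ separates the component of $\HL$ it lies in; a solid torus has no separating essential disk, so $D$ sits in the genus $2$ component and cuts it into two solid tori. Capping off on each side therefore realizes the two factors as genuine classical links $L_1$ and $L_2$. By hypothesis $L_1$ is the $2$-component, $2$-generator link, and counting components and genus ($\HL$ has $n$ components and total genus $n+1$) forces $L_2$ to be an $(n-1)$-component link, so $H_1(\Compl{L_2})\cong\Z^{n-1}$ with $n-1\ge 1$. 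The complementary disk $\Stwo\setminus\op{int}D$ is a properly embedded, simply connected disk splitting $\Compl{\HL}$ into pieces homotopy equivalent to $\Compl{L_1}$ and $\Compl{L_2}$, so van Kampen gives a free product
\[
\pi_1(\Compl{\HL})\;\cong\;\pi_1(\Compl{L_1})\ast\pi_1(\Compl{L_2}).
\]
Write $\Gamma_i=\pi_1(\Compl{L_i})$.

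Next I would count. Since $ks_{\mathsf{A}_4}$ counts $\mathsf{A}_4$-conjugacy classes of homomorphisms and $\op{Hom}(\Gamma_1\ast\Gamma_2,\mathsf{A}_4)=\op{Hom}(\Gamma_1,\mathsf{A}_4)\times\op{Hom}(\Gamma_2,\mathsf{A}_4)$ with $\mathsf{A}_4$ acting diagonally by conjugation, Burnside's lemma yields
\[
12\,ks_{\mathsf{A}_4}(\HL)=\sum_{g\in\mathsf{A}_4}\big|\op{Hom}(\Gamma_1,C(g))\big|\cdot\big|\op{Hom}(\Gamma_2,C(g))\big|,
\]
where $C(g)$ is the centralizer of $g$. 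Every nonidentity element of $\mathsf{A}_4$ has abelian centralizer ($V\cong(\Z/2)^2$ for the three double transpositions, $\Z/3$ for the eight $3$-cycles), so the relevant counts factor through $H_1$ and equal $|V|^{c_i}=4^{c_i}$ and $3^{c_i}$ with $c_1=2$, $c_2=n-1$. Writing $N_i=|\op{Hom}(\Gamma_i,\mathsf{A}_4)|$ for the identity term, I obtain
\[
12\,ks_{\mathsf{A}_4}(\HL)=N_1N_2+48\cdot 4^{n-1}+72\cdot 3^{n-1}.
\]

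The key computation is then $N_1$ and $N_2$. Because every proper subgroup of $\mathsf{A}_4$ is abelian, a homomorphism from a link group $\Gamma$ with $H_1=\Z^M$ into $\mathsf{A}_4$ either has abelian image (hence factors through $H_1$) or is surjective; summing the number of surjections over all subgroups gives
\[
|\op{Hom}(\Gamma,\mathsf{A}_4)|=4^M+4\cdot 3^M-4+24q,
\]
where $24q$ counts the surjections ($q$ the number of $\mathsf{A}_4$-quotients, using $|\op{Aut}(\mathsf{A}_4)|=24$). For $\Gamma_1$ ($M=2$) this reads $N_1=48+24p$; as $\Gamma_1$ is $2$-generated its $\mathsf{A}_4$-quotients inject into those of $F_2$, of which there are exactly four (equivalently $\mathsf{A}_4$ has $96$ ordered generating pairs), so $p\in\{0,1,2,3,4\}$. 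For $\Gamma_2$ ($M=n-1\ge 1$) the same formula gives $12\mid N_2$, since $4^{M}+4\cdot 3^{M}-4\equiv 0$ and $24q\equiv 0\pmod{12}$.

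Finally I would substitute. Putting $N_1=48+24p$ into the displayed identity and adding $(26+16p)3^{n-1}+(8+6p)4^{n-1}$ collapses everything to
\[
ks_{\mathsf{A}_4}(\HL)+(26+16p)3^{n-1}+(8+6p)4^{n-1}=(2+p)\big[\,2N_2+6\cdot 4^{n-1}+16\cdot 3^{n-1}\,\big],
\]
so that, since $48+24p=24(2+p)$, the asserted divisibility reduces to $12\mid N_2+3\cdot 4^{n-1}+8\cdot 3^{n-1}$; this holds because $12\mid N_2$ and $3\cdot 4^{n-1},\,8\cdot 3^{n-1}\equiv 0\pmod{12}$ for $n-1\ge 1$. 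I expect the main obstacle to be the first step—verifying that reducibility yields precisely this free product, i.e.\ correctly isolating the cofactor $L_2$ as an $(n-1)$-component link and carrying out the van Kampen decomposition along the reducing disk; the remaining steps form a finite, self-contained computation in $\mathsf{A}_4$, which is the content recalled from \cite{BePaWa:20}.
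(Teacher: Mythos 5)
Your proof is correct, and all of the arithmetic checks out: the Burnside identity $12\,ks_{\mathsf{A}_4}(\HL)=N_1N_2+48\cdot 4^{n-1}+72\cdot 3^{n-1}$, the count $\lvert\op{Hom}(\Z^M,\mathsf{A}_4)\rvert=4^M+4\cdot 3^M-4$ via the five maximal abelian subgroups, the bound $p\le 4$ from the $96$ ordered generating pairs of $\mathsf{A}_4$, and the final factorization into $(2+p)\bigl[2N_2+6\cdot 4^{n-1}+16\cdot 3^{n-1}\bigr]$ with $12\mid N_2$. Be aware, though, that the paper itself does not prove this lemma: it is explicitly \emph{recalled} from \cite{BePaWa:20} (``Below we recall the irreducibility test developed in \cite{BePaWa:20}''), so there is no in-paper argument to compare against; your derivation is the natural reconstruction of what that reference supplies. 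Two small points worth making explicit if you were to write this up: first, the topological step (the reducing disk must be a separating essential disk in the genus-$2$ component, so both factors are genuine links and $\pi_1(\Compl{\HL})\cong\Gamma_1\ast\Gamma_2$ by van Kampen along the complementary disk) is exactly where the paper leans on Grushko-type considerations, and you identify it correctly as the main geometric input; second, your argument is robust to iterated factorizations, since only $H_1(\Gamma_2)\cong\Z^{n-1}$ and $12\mid N_2$ are used for the cofactor, and both survive when $\Gamma_2$ is itself a free product of link groups.
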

From the above lemmas, one derives the following 
irreducibility test (see \cite{BePaWa:20} for more details),
making use of the Grushko theorem \cite{Gru:40}. 

\begin{corollary}[\textbf{Irreducibility test}]\label{reducibility_test}
A $3$-generator $(2,1)$-handlebody link is irreducible
if it fails to satisfy \eqref{eq:trivial_knot_A4_A5}; 
a $4$-generator $(2,1)$-handlebody link is irreducible
if it fails to satisfy \eqref{eq:2_generator_knot_A4}; 
a $4$-generator $(3,1)$-handlebody link or
a $5$-generator $(4,1)$-handlebody link
is irreducible if it fails to satisfy \eqref{eq:trivial_knot_A4_A5} 
and \eqref{eq:2_generator_link_A4}. 
\end{corollary}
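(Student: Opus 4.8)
The plan is to argue by contraposition: supposing $\HL$ is reducible, I will show that it must satisfy at least one of the divisibility relations \eqref{eq:trivial_knot_A4_A5}, \eqref{eq:2_generator_knot_A4}, \eqref{eq:2_generator_link_A4} prescribed by its type and generator number, so that failing the relevant relation(s) forces irreducibility. The first step is to pin down the factors of a reduction. If a $2$-sphere $\Stwo$ reduces $\HL$, then $D:=\Stwo\cap\HL$ is an incompressible separating disk lying in a single handlebody component; since every incompressible disk in a solid torus is non-separating, $D$ must lie in the genus $2$ component, where it splits it into two solid tori. Hence both factors $\HL_1,\HL_2$ are \emph{classical} links (each component a solid torus): if the two halves of the genus $2$ piece are accompanied by $p$ and $q=n-1-p$ of the remaining solid tori, then $\HL_1,\HL_2$ have $1+p$ and $1+q$ components, which sum to $n+1$.

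The second step records the two inputs that feed the case analysis. The complementary disk $\Stwo\setminus\operatorname{int}D$ is properly embedded, separating, and simply connected in $\Compl{\HL}$, so van Kampen presents $\pi_1(\Compl{\HL})$ as the free product $\pi_1(\Compl{\HL_1})\ast\pi_1(\Compl{\HL_2})$; the Grushko theorem then makes the generator number additive, $r=r_1+r_2$ with $r_i=\operatorname{rank}\pi_1(\Compl{\HL_i})$. In addition, the abelianization of a classical link group forces $r_i$ to be at least the number of components of $\HL_i$, and a knot factor satisfies $r_i=1$ exactly when it is the trivial knot.

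With these in hand the three clauses become a short combinatorial check. For a $(2,1)$-handlebody link $p+q=1$, so one factor is a knot and the other a $2$-component link, giving $r_1\ge 1$, $r_2\ge 2$, $r_1+r_2=r$. If $r=3$ this forces $r_1=1$, i.e.\ a trivial knot factor, and Lemma \ref{lemma:w_trivial_knot_A4_A5} yields \eqref{eq:trivial_knot_A4_A5}; if $r=4$ the knot factor has $r_1\in\{1,2\}$, hence is trivial or $2$-generator, and Lemma \ref{lemma:w_2_generator_knot_A4} yields \eqref{eq:2_generator_knot_A4} for some $p\in\{0,1\}$ (its $p=0$ instance is precisely the $\mathsf A_4$-part of \eqref{eq:trivial_knot_A4_A5}, so the trivial case is absorbed). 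For the minimal generator number $r=n+1$ with $n=3$ or $n=4$, additivity forces every factor to have generator number equal to its number of components; if some factor is a knot it is trivial and \eqref{eq:trivial_knot_A4_A5} holds, while otherwise every factor has at least two components, whence, the component numbers being $\ge 2$ and summing to $n+1\le 5$, one factor is a $2$-component $2$-generator link and Lemma \ref{lemma:w_2_generatore_link_A4} yields \eqref{eq:2_generator_link_A4}. In every reducible case one of the two congruences holds, so failing both forces irreducibility.

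The part I expect to require the most care is not the Grushko bookkeeping but the geometric identification of the factors: verifying that any reduction cuts the genus $2$ handle rather than a solid torus, that the resulting factors are honest classical links, and that their groups freely generate $\pi_1(\Compl{\HL})$, together with the two rank bounds ($\operatorname{rank}\ge$ number of components, and $\operatorname{rank}=1$ only for the unknot). Once these geometric and group-theoretic facts are secured, matching each enumerated factor type to the appropriate lemma is routine; the only mildly delicate bookkeeping point is recognizing that the $p=0$ instance of \eqref{eq:2_generator_knot_A4} recovers the trivial-knot condition, which is exactly what lets the single relation \eqref{eq:2_generator_knot_A4} cover the $4$-generator $(2,1)$ case.
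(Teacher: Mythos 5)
Your argument is correct and follows exactly the route the paper intends: locate the reducing disk in the genus~$2$ component, conclude that both factors are classical links, apply van Kampen and Grushko to get rank additivity, and then match each rank/component count to Lemmas \ref{lemma:w_trivial_knot_A4_A5}--\ref{lemma:w_2_generatore_link_A4}. The paper itself only sketches this derivation (deferring the details to \cite{BePaWa:20}), so your write-up, including the observation that the $p=0$ instance of \eqref{eq:2_generator_knot_A4} absorbs the trivial-knot case, is a faithful and complete filling-in of the same proof.
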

 
\begin{theorem}[\textbf{Irreducibility}]\label{teo:irreducibility}
Entries in Table\ref{tab:handlebodylinks} are irreducible.
\end{theorem}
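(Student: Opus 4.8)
The plan is to prove irreducibility entry by entry via the test of Corollary \ref{reducibility_test}, feeding in the Kitano--Suzuki data of Table \ref{tab:uniqueness}. For each $\HL$ the test needs two inputs: its type $(n,1)$, read off the component column of Table \ref{tab:uniqueness}, and the rank of $\pi_1(\Compl{\HL})$. For the rank I would first record the lower bound $\op{rank}\pi_1(\Compl{\HL})\ge n+1$, coming from the isomorphism $H_1(\Compl{\HL})\cong\Z^{\,n+1}$ (the total genus of an $(n,1)$-handlebody link is $n+1$, so Alexander duality gives $H_1(\Compl{\HL})\cong\Z^{\,n+1}$). Together with the Appcontour upper bounds in Table \ref{tab:uniqueness} this pins the rank to exactly $n+1$ for every entry except $5_1,6_5,6_7$, where it leaves the two possibilities $3$ and $4$.

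Next I would dispatch all entries for which the test is decisive. For the rank-$3$ $(2,1)$-links $4_1,6_1,6_2,6_3,6_4,6_6,6_8$ I would check that \eqref{eq:trivial_knot_A4_A5} fails; for $6_2,6_4,6_6$ the $\mathsf A_4$ congruence happens to hold and it is the $\mathsf A_5$ congruence that fails. For $5_1,6_5,6_7$, whose rank is $3$ or $4$, I would verify that \emph{both} the rank-$3$ criterion \eqref{eq:trivial_knot_A4_A5} and the rank-$4$ criterion \eqref{eq:2_generator_knot_A4} (for both $p=0$ and $p=1$) fail, so that irreducibility follows whichever the true rank turns out to be. For the rank-$4$ $(3,1)$-links $6_{10},6_{11},6_{13},6_{14}$ and the rank-$5$ $(4,1)$-link $6_{15}$ I would check that \eqref{eq:trivial_knot_A4_A5} fails \emph{and} that \eqref{eq:2_generator_link_A4} fails for all admissible $p$. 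These are finite modular-arithmetic checks on the integers tabulated in Table \ref{tab:uniqueness}.

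The main obstacle is the pair $6_9$ and $6_{12}$, for which the Kitano--Suzuki congruences are satisfied and the test is silent. For $6_9$ both \eqref{eq:trivial_knot_A4_A5} (the two sides equal $600$ and $6000$) and \eqref{eq:2_generator_link_A4} at $p=0$ (equal to $672$) hold, so neither a trivial-knot factor nor a $2$-generator $2$-component link factor is excluded by these invariants. Here I would argue geometrically: a reducing sphere $\Stwo$ meets $\HL$ in an essential disk $D$, and the complementary disk $\Stwo\setminus\mathring D$ is then a properly embedded disk in $\Compl{\HL}$ whose boundary $\partial D$ is essential on $\partial\HL$; hence reducibility of $\HL$ forces $\Compl{\HL}$ to be $\partial$-reducible. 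Since $\Compl{6_9}$ is $\partial$-irreducible (Remark \ref{rem:irre_HL_re_complement}), $6_9$ must be irreducible. For $6_{12}$ the trivial-knot factor is already excluded (the $\mathsf A_5$ part of \eqref{eq:trivial_knot_A4_A5} fails), but \eqref{eq:2_generator_link_A4} holds at $p=0$, leaving only a $2$-generator $2$-component link factor to be ruled out; as its complement need not be $\partial$-irreducible the $6_9$ argument does not transfer. Instead I would either invoke a finer invariant (an $\mathsf A_5$-analogue of Lemma \ref{lemma:w_2_generatore_link_A4}, exploiting $ks_{\mathsf A_5}(6_{12})=5883$) or argue directly that no decomposing sphere of $6_{12}$ splits off a $2$-component $2$-generator link---for instance by identifying the component whose removal yields the irreducible $4_1$ and showing that any such splitting would descend to a nontrivial factorization of $4_1$.

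I expect the congruence checks of the first two paragraphs to be immediate, so that the genuine content lies entirely in the two exceptional links: establishing $\partial$-irreducibility of $\Compl{6_9}$ (most naturally by a normal-surface/incompressibility analysis or from the software output) and closing off the residual $2$-generator link factor for $6_{12}$.
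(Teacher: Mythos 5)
Your reduction to the two exceptional entries is exactly the paper's: Corollary \ref{reducibility_test} together with Table \ref{tab:uniqueness} disposes of every entry except $6_9$ and $6_{12}$, and your modular arithmetic correctly locates where the congruences are silent. The gap lies in how you close those two cases. For $6_9$ your argument rests on a false premise. The implication you invoke is sound (an incompressible disk $D=\Stwo\cap\HL$ has $\partial D$ essential in $\partial\HL$, so $\Stwo\setminus\mathring{D}$ is an essential disk in the complement and reducibility forces $\partial$-reducibility of $\Compl{\HL}$), but its contrapositive gives you nothing here, because $\Compl{6_9}$ is in fact $\partial$-\emph{reducible}: Remark \ref{rem:irre_HL_re_complement} and Fig.\ \ref{fig:6_9_fake6_9} exhibit an explicit homeomorphism of the complement with that of an order-$1$ connected sum of two Hopf links. (The introduction contains a typo describing the complement as ``$\partial$-irreducible''; the content of the remark is the opposite, namely that $6_9$ is an irreducible handlebody link with $\partial$-reducible complement, a phenomenon that Tsukui's theorem rules out for genus-$2$ handlebody knots.) So this route to $6_9$ cannot be repaired. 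The paper instead excludes a trivial-knot factor of $6_9$ by noting that $H_1(V_1)\oplus H_1(V_2)\rightarrow H_1(\Compl{W})$ is onto, where $V_1,V_2$ are the solid torus components and $W$ the genus-$2$ component, and excludes a $2$-generator $2$-component link factor by a linking number computation.

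For $6_{12}$ you correctly eliminate the trivial-knot factor via the $\mathsf{A}_5$ congruence, but the remaining case---a $2$-generator $2$-component link factor---is deferred to two alternatives, neither of which is carried out; as it stands this case is open in your write-up. The paper's argument is short and concrete: if such a link were a factor, the two solid torus components of $6_{12}$ would have trivial linking number, and a direct computation \cite{Miz:13} shows they do not. The same linking number argument disposes of the corresponding case for $6_9$. You should replace your $\partial$-irreducibility argument and your two sketched alternatives with these (or equivalent) explicit computations.
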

\begin{proof} 
Corollary \ref{reducibility_test}, together with Table \ref{tab:uniqueness}, 
shows that all but $6_9$, $6_{12}$ are irreducible. 
The irreducibility of $6_{12}$ and $6_9$ follows from
computing the linking number between each pair of components in 
each of them. Specifically, 
if $6_{12}$ (resp.\ $6_9$) is reducible, 
then either the trivial knot 
or a $2$-generator $2$-component link 
is a factor of some factorization of $6_{12}$ (resp.\ $6_9$).
For $6_{12}$, the former case 
is not possible by \eqref{eq:trivial_knot_A4_A5};
the latter impossible too, for otherwise the two solid 
torus components would have a trivial linking number.
The same argument implies that 
$6_9$ cannot have a $2$-generator $2$-component
link as a factor, and the trivial knot cannot be its factor either, since
the homomorphism of integral homology
\[H_1(V_1)\oplus H_1(V_2)\rightarrow H_1(\Compl{W})\] 
is onto, where $V_1, V_2$ are the solid torus components,
and $W$ the genus $2$ component.
\end{proof}

\begin{remark}\label{rem:irre_HL_re_complement}
The complement of $6_9$ is in fact $\partial$-reducible;
one can see this by performing the twist operation, 
indicated by the arrow in Fig.\ \ref{fig:6_9_fake6_9},
where it shows that its complement is homeomorphic 
to the complement of the order-$1$ connected sum (Definition \ref{def:one_sum})
between two Hopf links (Fig.\ \ref{fig:6_9_fake6_9}, right).
In the connected case, 
no irreducible handlebody knot of genus $2$ admits a
$\partial$-reducible complement \cite[Theorem $1$]{Tsu:75},  
but when the genus is larger than $2$, such handlebody knots exist 
\cite[Example $5.5$]{Suz:75}, \cite[Section $5$]{Tsu:75}.
For a $(n,1)$-handlebody link, we suspect that $6_9$ attains 
the lowest possible $n$ for such a phenomenon to happen. 
\begin{figure}[h]
\def\svgwidth{0.95\columnwidth}
\begingroup%
  \makeatletter%
  \providecommand\color[2][]{%
    \errmessage{(Inkscape) Color is used for the text in Inkscape, but the package 'color.sty' is not loaded}%
    \renewcommand\color[2][]{}%
  }%
  \providecommand\transparent[1]{%
    \errmessage{(Inkscape) Transparency is used (non-zero) for the text in Inkscape, but the package 'transparent.sty' is not loaded}%
    \renewcommand\transparent[1]{}%
  }%
  \providecommand\rotatebox[2]{#2}%
  \newcommand*\fsize{\dimexpr\f@size pt\relax}%
  \newcommand*\lineheight[1]{\fontsize{\fsize}{#1\fsize}\selectfont}%
  \ifx\svgwidth\undefined%
    \setlength{\unitlength}{3685.03937008bp}%
    \ifx\svgscale\undefined%
      \relax%
    \else%
      \setlength{\unitlength}{\unitlength * \real{\svgscale}}%
    \fi%
  \else%
    \setlength{\unitlength}{\svgwidth}%
  \fi%
  \global\let\svgwidth\undefined%
  \global\let\svgscale\undefined%
  \makeatother%
  \begin{picture}(1,0.11538462)%
    \lineheight{1}%
    \setlength\tabcolsep{0pt}%
    \put(0,0){\includegraphics[width=\unitlength,page=1]{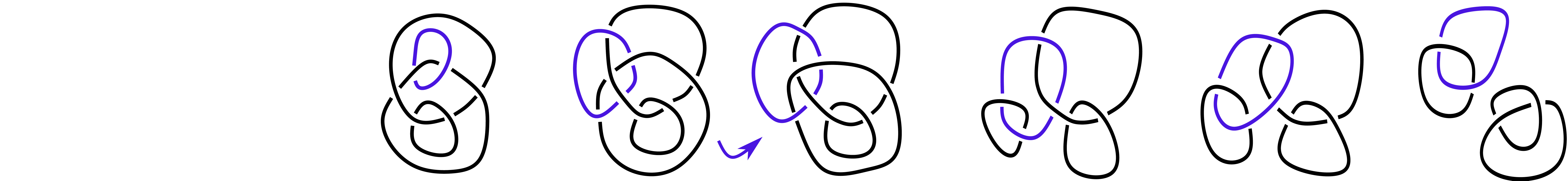}}%
    \put(0.32870274,0.05075979){\color[rgb]{0,0,0}\makebox(0,0)[lt]{\lineheight{1.25}\smash{\begin{tabular}[t]{l}$\simeq$\end{tabular}}}}%
    \put(0.58987018,0.05068951){\color[rgb]{0,0,0}\makebox(0,0)[lt]{\lineheight{1.25}\smash{\begin{tabular}[t]{l}$\simeq$\end{tabular}}}}%
    \put(0.7321759,0.0485348){\color[rgb]{0,0,0}\makebox(0,0)[lt]{\lineheight{1.25}\smash{\begin{tabular}[t]{l}$\simeq$\end{tabular}}}}%
    \put(0.87522508,0.04580496){\color[rgb]{0,0,0}\makebox(0,0)[lt]{\lineheight{1.25}\smash{\begin{tabular}[t]{l}$\simeq$\end{tabular}}}}%
    \put(0,0){\includegraphics[width=\unitlength,page=2]{links6_9_fake6_9.pdf}}%
    \put(0.20714179,0.04732422){\color[rgb]{0,0,0}\makebox(0,0)[lt]{\lineheight{1.25}\smash{\begin{tabular}[t]{l}$\simeq$\end{tabular}}}}%
    \put(0.0912713,0.05152589){\color[rgb]{0,0,0}\makebox(0,0)[lt]{\lineheight{1.25}\smash{\begin{tabular}[t]{l}$\simeq$\end{tabular}}}}%
    \put(0,0){\includegraphics[width=\unitlength,page=3]{links6_9_fake6_9.pdf}}%
  \end{picture}%
\endgroup%
 
\caption{$6_9$ and fake $6_9$.}
\label{fig:6_9_fake6_9}
\end{figure} 
\end{remark}

\nada{
\begin{remark}\label{rem:reducibility}
This result also implies that Table \ref{tab:reducible} is complete.
Indeed by contradiction there would exist a diagram with edge connectivity two or three corresponding to a reducible
link with a larger number of crossings.
Such diagram would result from the software code (if the link is of type-3) or would be a knot sum of spatial graphs
if it is of type-2.
This would become apparent when checking such diagrams against their reducibility, but this is not the case. 
\end{remark}
} 


\section{Completeness}\label{sec:completeness}
This section discusses completeness
of Table \ref{tab:handlebodylinks}. 
Recall first that a minimal diagram of a 
non-split, irreducible handlebody link 
has either $2$- or $3$-connectivity.
IH-minimal diagrams with $3$-connectivity 
are obtained from a software code,
and IH-minimal diagrams with $2$-connectivity 
are recovered by knot sum of spatial graphs. 
 
\subsection{Minimal diagrams with 3-connectivity}
We consider plane graphs with two trivalent
vertices and up to six quadrivalent vertices
satisfying the properties:   
\begin{enumerate}
\item
each of them has edge-connectivity $3$ as an abstract graph,  
\item
their double arcs can only connect two quadrivalent vertices as abstract graphs, and  
\item
their double arcs only form a ``bigon'' (a polygon with two sides;
the case `i' in Fig.\ \ref{fig:loops_double_arcs}) as plane graphs.
\end{enumerate}
The reason of considering only double arcs connecting two quadrivalent vertices with a bigon configuration
is because all the other cases lead to 
either non-R-minimal diagrams or diagrams
with connectivity less than $3$ (see Fig.\ \ref{fig:loops_double_arcs},
where ``d, e, f, g, h'' illustrate those double arcs 
connecting at least one trivalent vertex and ``j, k, l''  
those connecting two quadrivalent 
vertices with a non-bigon configuration.)

\begin{figure}[h]
\includegraphics[height=0.1\textwidth]{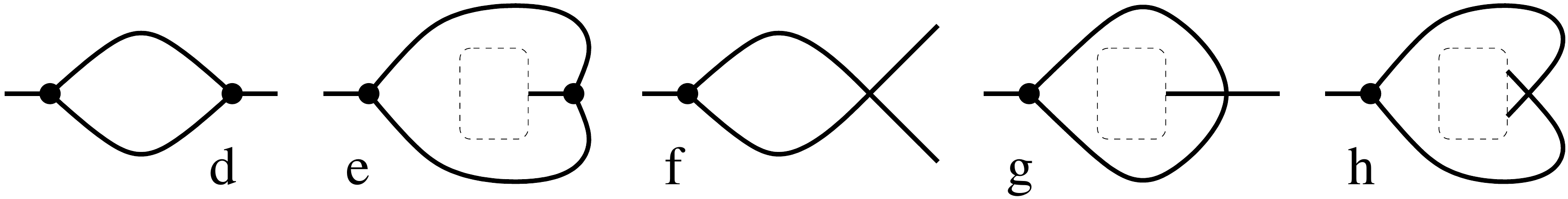}
\includegraphics[height=0.1\textwidth]{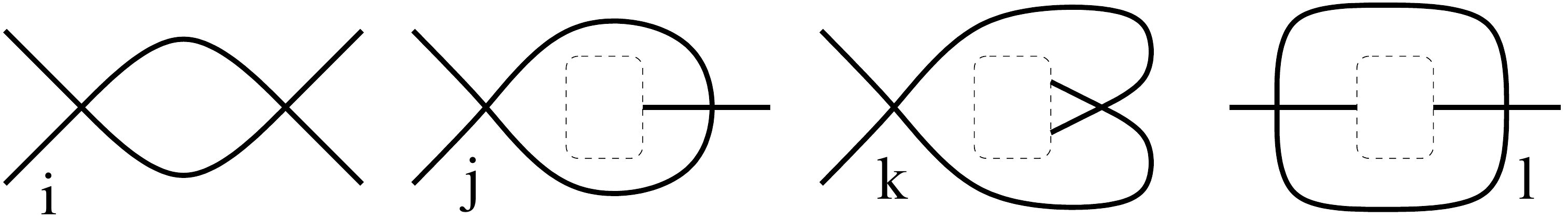}
\caption{Possible configurations for loops and double arcs.}
\label{fig:loops_double_arcs}
\end{figure}
We enumerate such plane graphs by the software code, and 
then recover diagrams from these plane graphs by adding an
over- or under-crossing to each quadrivalent vertex.
Note that the number ($n$ in Table \ref{tab:numfromcode}) 
of components of the associated spatial graphs
is independent of how over/under-crossings are chosen.
To provide a glimpse of how the code works, we record in
Table \ref{tab:numfromcode} the number of 
such plane graphs with $c$ quadrivalent
vertices for each $n$. To recover 
$(n,1)$-handlebody links with $n>1$ represented by 
IH-minimal diagrams with $3$-connectivity,
we need to consider 
the cases with $n>1$ in Table \ref{tab:numfromcode}.
On the other hand, to produce $(n,1)$-handlebody links
represented by IH-minimal diagrams with $2$-connectivity, 
spatial graphs admitting an R-minimal diagram with 
$3$-connectivity up to $4$ crossings are required; thus 
all cases with $c\leq 4$ have to be examined.

\begin{table}[h!]
  \begin{center}
    \caption{Plane graphs given by the code.
     }
    \label{tab:numfromcode}
    \begin{tabular}{c|rrr|r}
      $c$ & $n=1$ & $n=2$ & $n=3$ & \textbf{total} \\
      \hline
      2 &   1 &    &   &   1 \\
      3 &   2 &  1 &   &   3 \\
      4 &   8 &  2 &   &  10 \\
      5 &  29 &  8 &   &  37 \\
      6 & 144 & 34 & 3 & 181 \\
    \end{tabular}
  \end{center}
\end{table}

\medskip
\noindent  
\textbf{IH-minimal diagrams.} 
We examine IH-minimality of diagrams produced by
plane graphs with $n\geq 2$,
and discard those obviously not IH-minimal.
This excludes all diagrams
produced by the code up to $5$ crossings  
(Table \ref{tab:detailsfor5}), but for diagrams with $6$
crossings, some diagrams are potentially IH-minimal: they 
represent handlebody links $6_1$, $6_2$, $6_3$ or $6_9$ in Table \ref{tab:handlebodylinks}.
\begin{lemma}\label{lm:three_connected_IH_minimal}
An IH-minimal diagram with $3$-connectivity has crossing number 
$c\geq 6$, and if $c=6$, it represents a handlebody link equivalent to 
$6_1$, $6_2$, $6_3$ or $6_9$, up to mirror image.
\end{lemma}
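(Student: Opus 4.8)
The plan is to reduce the statement to the finite enumeration already set up by the code together with a case-by-case inspection of the resulting diagrams, using crucially that IH-minimality is a \emph{non}-reducibility condition. A $3$-connected diagram of an $(n,1)$-handlebody link has underlying plane graph with exactly two trivalent and $c$ quadrivalent vertices, so every such diagram arises, by a choice of over/under information at each crossing, from one of the plane graphs enumerated in Table \ref{tab:numfromcode}. Since we are after genuine links, I would restrict throughout to the $n\geq 2$ entries of that table, and for each plane graph with $c$ quadrivalent vertices range over all $2^{c-1}$ diagrams, up to mirror image.

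The key logical device is the following: an IH-minimal diagram cannot be simplified by any sequence of generalized Reidemeister and IH-moves, so if I exhibit a crossing-reducing move for a given diagram, that diagram is automatically \emph{not} IH-minimal and may be discarded; it therefore suffices to attempt reductions on every diagram and keep track only of the \emph{survivors}. For $c\leq 5$ there are just $1+2+8=11$ relevant plane graphs (the $n\geq 2$ rows of Table \ref{tab:numfromcode} with $c\leq 5$), hence finitely many diagrams. I would show that each admits such a reduction---typically an IH-move producing a monogon or a bigon removable by a Reidemeister I or II move---so that no IH-minimal $3$-connected diagram survives with $c\leq 5$, giving $c\geq 6$; these reductions are summarized in Table \ref{tab:detailsfor5}.

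For $c=6$ the same procedure is applied to the $34+3=37$ plane graphs with $n\geq 2$. After pruning every diagram that admits a reducing move, only a short list of \emph{potentially} IH-minimal diagrams remains, and I would identify each survivor, by an explicit sequence of moves, with one of $6_1$, $6_2$, $6_3$, $6_9$. Because any genuinely IH-minimal $6$-crossing $3$-connected diagram is, by definition, non-reducible and hence cannot have been pruned, it must lie among these survivors; this yields the claimed identification up to mirror image. To ensure that no two survivors are mistakenly identified and that the identifications are correct, I would cross-check against the Kitano-Suzuki invariants of Table \ref{tab:uniqueness} and Theorem \ref{teo:uniqueness}.

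The main obstacle is the volume and delicacy of the bookkeeping in the $c=6$ step: each of the $37$ plane graphs spawns up to $2^{5}=32$ diagrams, and for every one I must either certify a concrete reducing move or carry out an explicit identification, all while avoiding double counting under mirror image and under the IH-move, which---unlike the Reidemeister moves---changes the underlying spatial graph and so can connect diagrams that look unrelated. I would organize this around the output of the code analyzed in Appendix \ref{sec:code}, treating the handful of survivors with particular care, since confirming that a diagram resists \emph{all} reductions, rather than merely the obvious ones, is exactly where IH-minimality is subtler than Reidemeister-minimality.
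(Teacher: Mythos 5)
Your proposal follows essentially the same route as the paper: reduce to the code's enumeration of $3$-edge-connected plane graphs with two trivalent vertices (Table \ref{tab:numfromcode}, $n\geq 2$ rows), discard every induced diagram that admits a crossing-reducing generalized Reidemeister or IH-move, and identify the surviving $6$-crossing diagrams with $6_1$, $6_2$, $6_3$, $6_9$, exactly as documented in Tables \ref{tab:detailsfor5} and \ref{tab:detailsfor6} of Appendix \ref{sec:code}. One small simplification: since the lemma is only a one-directional claim, you never need to certify that any survivor actually \emph{is} IH-minimal (the paper explicitly defers that to the $2$-connectivity analysis), so the ``subtler'' verification you worry about in your final paragraph is not required here.
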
 
Note that we cannot
conclude diagrams of $6_1, 6_2, 6_3$ and $6_9$ in Table \ref{tab:handlebodylinks}
are IH-minimal yet,  as they 
might admit diagrams with $2$-connectivity
and fewer crossings.

\medskip
\noindent
\textbf{R-minimal diagrams.}
To produce minimal diagrams with $2$-connectivity up to $6$ crossings,
we need R-minimal diagrams up to $4$ crossings.
Inspecting R-minimality of diagrams produced
by the code (Table \ref{tab:detailsfor1_4}) gives us the following lemma.
\begin{lemma}\label{lm:three_connected_R_minimal}
An R-minimal diagram with $3$-connectivity and crossing number less than $5$   
represents one of the spatial graphs in Table \ref{tab:graphs}, up to mirror image.       
\begin{table}[ht]
\caption{Spatial graphs up to four crossings. 
}
\label{tab:graphs}
\includegraphics[height=0.12\textwidth]{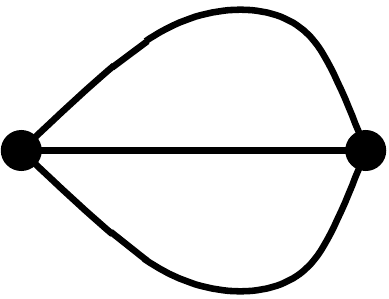}$\op{G0_1}$
\includegraphics[height=0.15\textwidth]{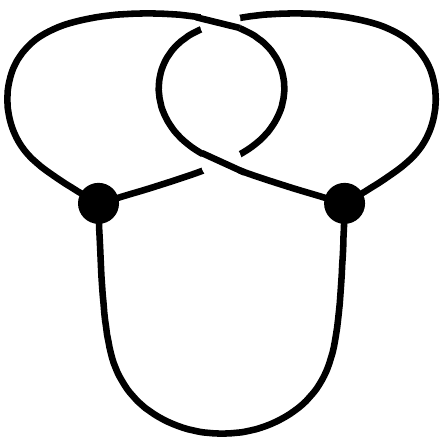}$\op{G2_1}$
\includegraphics[height=0.15\textwidth]{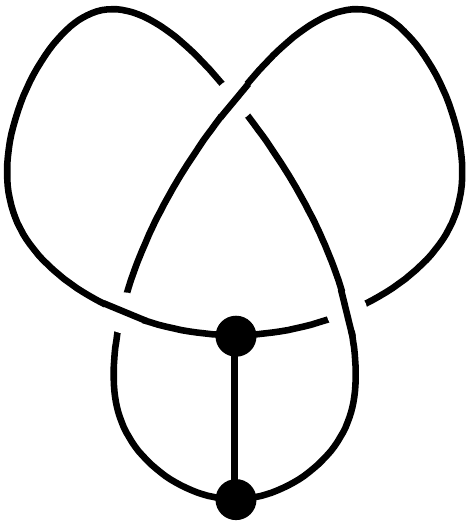}$\op{G3_1}$
\includegraphics[height=0.15\textwidth]{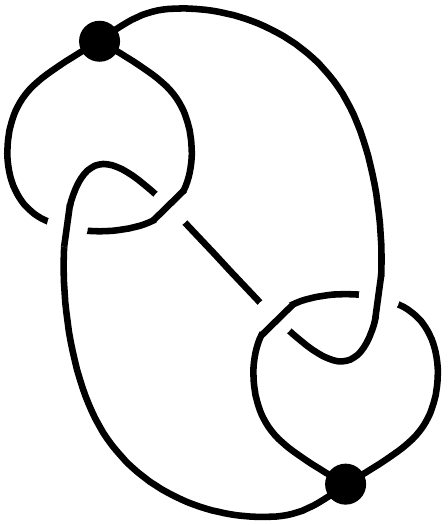}$\op{G4_1}$\\
\vspace*{1.5em} 
\includegraphics[height=0.15\textwidth]{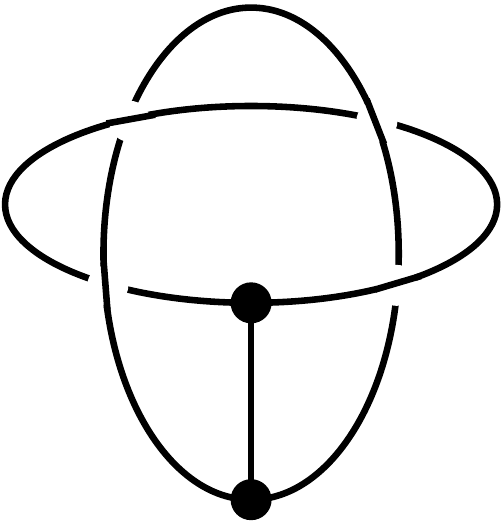}$\op{G4_2}$
\includegraphics[height=0.15\textwidth]{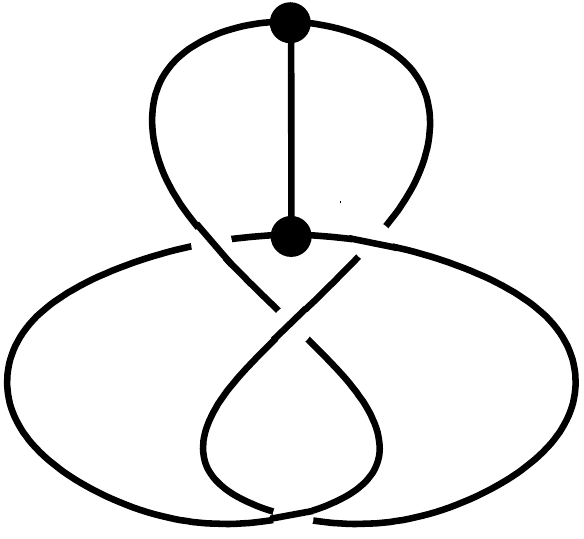}$\op{G4_3}$
\includegraphics[height=0.15\textwidth]{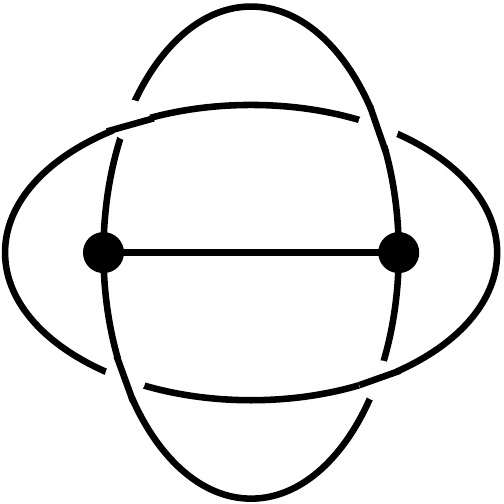}$\op{G4_4}$
\includegraphics[height=0.15\textwidth]{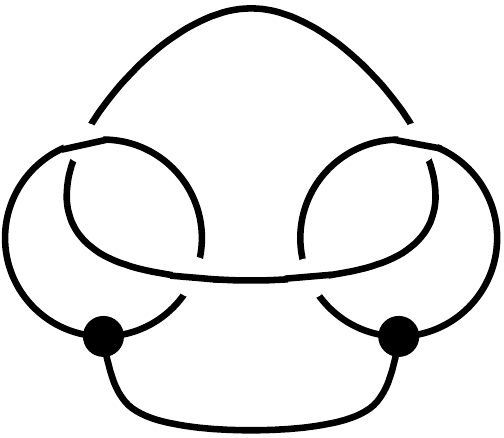}$\op{G4_5}$
\end{table}  
\end{lemma}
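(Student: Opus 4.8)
The plan is to reduce the statement to a finite check over the plane graphs produced by the code. By the discussion preceding the lemma, every diagram with $3$-connectivity arises from a plane graph having two trivalent vertices and $c$ quadrivalent vertices, subject to the conditions (1)--(3), by assigning an over/under crossing at each quadrivalent vertex; so for $c<5$ it suffices to examine the plane graphs with $0\le c\le 4$. First I would dispose of the two smallest cases by hand. For $c=0$ the unique $3$-edge-connected plane graph on two trivalent vertices is the theta graph, whose crossingless diagram is $\op{G0_1}$. For $c=1$ the degree sequence (two vertices of degree $3$ and one of degree $4$, hence five edges) forces, after ruling out loops by the $3$-edge-connectivity requirement, a double arc joining a trivalent vertex to the quadrivalent one; this is excluded by condition~(2), so no $1$-crossing diagram occurs. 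For $c=2,3,4$ I would invoke the enumeration recorded in Table \ref{tab:numfromcode} (respectively $1$, $3$, and $10$ plane graphs) and, for each, generate the $2^{c-1}$ diagrams up to mirror image.

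Next I would prune this finite list. For each diagram I would apply the obvious crossing-reducing moves---a Reidemeister~I kink, a Reidemeister~II bigon, or the trivalent moves IV and V of Fig.\ \ref{fig:reidemeister_b}---and discard any diagram whose crossing number drops, since such a diagram is not R-minimal and, after reduction, coincides with a diagram of smaller crossing number already on the list. The diagrams that survive are the R-minimal candidates, and for each I would identify the underlying spatial graph by exhibiting an explicit finite sequence of generalized Reidemeister moves carrying it to one of the eight diagrams of Table \ref{tab:graphs}; I would organize this bookkeeping plane graph by plane graph, as summarized in Table \ref{tab:detailsfor1_4}.

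To make the pruning and identification rigorous I would use an invariant of spatial graphs that is insensitive to the generalized Reidemeister moves yet fine enough to separate the eight entries of Table \ref{tab:graphs}; the component count $n$ together with the fundamental group of the complement (equivalently the Kitano--Suzuki counting invariants used in Table \ref{tab:uniqueness}) and, where needed, the Yamada polynomial, serves this purpose, certifying both that no two entries coincide and that each surviving diagram matches exactly one entry. The main obstacle is certifying \emph{genuine} R-minimality, that is, that a surviving diagram's crossing number cannot be lowered by \emph{any} sequence of generalized Reidemeister moves rather than only those I tried by hand. For this lower bound I would appeal to the existing tabulation of theta-curves and handcuff graphs up to higher crossing number \cite{Mor:09a}, in which the minimal crossing numbers of exactly these building blocks are already established, and cross-check it against a crossing-number lower bound derived from the span of the Yamada polynomial. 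The remaining verification---that the two crossing choices on each plane graph collapse, via explicit moves, onto the listed representatives---is routine but voluminous, and is precisely the content deferred to the case analysis of Table \ref{tab:detailsfor1_4}.
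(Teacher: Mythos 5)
Your proposal follows the paper's own route: the lemma is established by the exhaustive computer enumeration of $3$-edge-connected plane graphs with two trivalent and at most four quadrivalent vertices (subject to the double-arc restrictions), followed by the case-by-case inspection of the induced diagrams recorded in Table \ref{tab:detailsfor1_4}. The only real difference is your extra step of certifying genuine R-minimality of the surviving diagrams via invariants and Moriuchi's tables; this is not needed for the lemma as stated, since a diagram that is secretly non-R-minimal simply falls outside the hypothesis and it suffices to identify the spatial graph of every surviving diagram, though your extra care does matter for the later use of these graphs in the $2$-connectivity analysis.
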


\nada{  
%

 
%

}

\nada{
\textbf{No loops:}
By examining cases `a', `b', `c' in Figure \ref{fig:loops_double_arcs} we have
\begin{itemize}
\item[a.] We can apply Reidemeister move I to reduce the number of crossings, hence the
corresponding diagram is not minimal;
\item[b and c.]
the graph has edge-connectivity one, hence it is the diagram of a reducible handlebody link.
\end{itemize}


\textbf{No double arcs except the ``bigon'':}
By examining cases `d' through `k', with the exception of case `h', we have
that
\begin{itemize}
\item[d,e,g,h,j,k,l.]
the corresponding graph has edge-connectivity $2$ or $1$;
note that, in case `d',
applying one IH-move leads to a graph with edge connectivity $1$;
\item[f.]
applying Reidemeister move V reduces the number of crossings.
\end{itemize}

By forgetting the information about overpass/underpass the diagram of a handlebody link
can be identified with a planar graph embedded in $\Sbb^2$ where crossings of the
diagram correspond to $4$-valent nodes and the triple-points correspond to $3$-valent
nodes.
We must exercise a little bit of care since the unknot can be projected in $\Sbb^2$ as
a closed loop with no end-points and as such does not have a counterpart as part of a graph.
This however cannot happen if we only consider unsplittable links (also excluding the unknot
itself) i.e. links where we cannot untangle a component and move it far apart from the
rest of the link.

In order to be certain to consider all possible links up to a certain number of crossings
in their diagram, it suffices to construct all possible embeddings in $\Sbb^2$ of a planar
graph with a fixed ($2$ for the case of genus $g = k+1$) $3$-valent nodes and a given number
$c$ of $4$-valent nodes.
We can do this up to mirror image, modulo reversing all overcrossing/undercrossing information.
The overcrossing/undercrossing information must then be added to the graph in all possible ways
(two choices for each $4$-valent node).
Since we are interested in classifying links up to mirror image, we can arbitrarily chose one of the
overcrossing/undercrossing information.

At this point we can rule out some of the resulting diagrams according to the rules

\begin{itemize}
\item
Application of Reidemeister moves I, II, III, IV, V and of the IH-move can reduce the number of crossings
(the diagram is not minimal) or disconnects the diagram;
\item
Application of Reidemeister moves I to V or of the IH-move transforms the diagram into another one (with the same
number of nodes) in which case we only retain one of them.
\end{itemize}

In the list above we must exclude the IH-move if our intent is to classify graphs embedded in $\sphere$
up to ambient isotopy rather then handlebody links, in the spirit of
\cite{Mor:09a}, \cite{Mor:07b}, \cite{Mor:09a}.
This is not just a side remark, as we shall see.

In order to reduce the number of embeddings to consider we can first of all exclude graph with edge-connectivity
one (can be disconnected upon removal of a single arc) since they clearly correspond to reducible handlebody
links.

Finally we shall treat separately also the case of graphs with edge-connectivity two, since the corresponding
handlebody can be constructed by connecting together pieces with an operation similar to the knot sum, although
with some care.
\draftMMM{text to be added}
}

\subsection{Minimal diagrams with 2-connectivity}\label{subsec:typetwo}
Recall a diagram $D$ with $2$-conn\-ectivity 
can be decomposed into 
finitely many simpler tangle diagrams such that each 
associated diagram of spatial graphs has $3$- or $4$-connectivity
(Fig.\ \ref{fig:decomposition_type_2}).
Furthermore, if $D$ is R-minimal, each induced spatial graph 
diagram is also R-minimal. In particular, an IH-minimal diagram 
with $2$-connectivity
can be recovered by performing the order-2 vertex connected sum
between spatial graphs admitting a minimal diagram 
with $k$-connectivity, $k>2$.
Since we are interested in $(n,1)$-handlebody links, 
only one summand is a spatial graph 
with two trivalent vertices,
and the rest are links
admitting a minimal diagram with $4$-connectivity.
\nada{
If we think of this operation as performed on spatial graphs that project onto plane graphs we
are adding (in the sense of knot sum) knots or links to a possibly knotted handcuff or theta
curve possibly tangled with one or more embedded circles.

In order to list all possible results of knot sums we need to start from a $3$-edge-connected
graph corresponding to the diagram of some spatial graph where a (knotted) handcuff or a theta curve is
tangled to zero of more embedded circles.

Reidemeister moves can be safely performed separately on the summands of a knot sum (after adding
over/under information on crossings):  they will be still applicable on the sum result.
On the contrary, the IH-move does not in general survive the knot sum operation.
}
Note that the simplest minimal diagram with $4$-connectivity 
represents the Hopf link, and since we only consider minimal diagrams 
up to $6$ crossings, there are at most three link summands. 
Thus, IH-minimal diagrams with $2$-connectivity 
can be recovered by considering the seven possible configurations below:
\begin{multicols}{2}
\begin{enumerate}
\item
$G \# L_1$,
\item
($G \# L_1) \# L_2$,
\item
$G \# (L_1 \# L_2)$,\\
\item
(($G \# L_1) \# L_2) \# L_3$,
\item
($G \# L_1) \# (L_2 \# L_3)$,
\item
($G \# (L_1 \# L_2)) \# L_3$,
\item
$G \# ((L_1 \# L_2) \# L_3)$,
\end{enumerate}
\end{multicols}
\noindent
where $G$ is a spatial graph admitting a minimal diagram
with $3$-connectivity, 
and $L_i$ is a link admitting a minimal diagram with $4$-connectivity. 
In general it is not known if 
a minimal diagram with $4$-connectivity always represents 
a prime link; it is the case, however, when the crossing number
is less than $5$. 
In fact, there are only four 
minimal diagrams with $4$-connectivity up to $4$ crossings, and 
they represent the Hopf link,
the trefoil knot, the figure eight, and Solomon's knot (L4a1), respectively.

\textbf{Cases $4$ through $7$} are easily dealt with 
since $G$ must have no crossings, and hence it is
the trivial theta curve $\op{G0_1}$, and thus each $L_i$ is necessarily 
the Hopf link, so the knot sums actually
consist in `inserting a ring' somewhere 
to the result of the previous knot sums.
To produce
irreducible handlebody links
there is only one possibility, that is, 
adding one Hopf link to each of the three arcs of the trivial theta curve, and
this gives us entry $6_{15}$ in Table \ref{tab:handlebodylinks}.

\textbf{Cases $2$ and $3$} forces $G$ to have $2$ crossings at most.  
It cannot have zero crossings (trivial
theta curve), for otherwise, we could only get diagrams
representing reducible handlebody links.
Note also that there is no $R$-minimal diagram with $1$ crossing.
Now, there is only one $R$-minimal diagram with two crossings, 
this is, entry G$2_1$ in Table \ref{tab:graphs} 
(Moriuchi's $2_1$ in \cite{Mor:07b}).

Now, to add two Hopf links to it, 
i.e. to place two rings successively, we observe that one of them must
be placed around the connecting arc of the handcuff graph 
by irreducibility.
The second ring can be placed in three inequivalent ways, 
which yield entries $6_{12}$, $6_{13}$ and
$6_{14}$ of Table \ref{tab:handlebodylinks}.

\textbf{Case $1$} is more complicated, and
we divided it into subcases based on the crossing number   
$c:=c(G)$. The case $c = 0$ is immediately
excluded by irreducibility, so three possibilities remain: $c \in \{2,3,4\}$.

%
\textit{Subcase $c(G) = 2$}.
$G$ is necessarily G$2_1$ in Table \ref{tab:graphs}, and    
$L$ cannot be a knot. 
Since the crossing number of $L$ cannot exceed $4$, $L$ is
either L2a1 (Hopf link) or L4a1 (Solomon's knot).
In either case, $L$ is to be added to 
the connecting arc of the handcuff graph 
to produce irreducible handlebody links, yielding
entries $4_1$ and $6_8$ in Table \ref{tab:handlebodylinks}.

\textit{Subcase $c(G) = 3$}.
$G$ is necessarily G$3_1$   
(Moriuchi's theta curve $3_1$ in \cite{Mor:09a}),
so $L$ cannot be a knot, and hence is the Hopf link.
There is only one place to add $L$ by irreducibility, and 
this leads to entry $5_1$ in Table \ref{tab:handlebodylinks}.

\textit{Subcase $c(G) = 4$}.
In this case, $L$ can only be the Hopf link; 
and there are five possible spatial graphs for $G$, namely
G$4_1$, G$4_2$, G$4_3$, G$4_4$, and G$4_5$: 
\begin{itemize}
\item
For G$4_1$ in Table \ref{tab:graphs}
(Moriuchi's non-prime handcuff graph $2_1 \#_3 2_1$ \cite{Mor:09b}), 
there are two inequivalent ways to add $L$ which produce 
entries $6_4$ and $6_5$.
\item
For G$4_2$ and G$4_3$ in Table \ref{tab:graphs} 
(Moriuchi's prime handcuff graph $4_1$ \cite{Mor:07b}
and prime theta-curve $4_1$ \cite{Mor:09a}, respectively), 
there is only one way to 
add the Hopf link in each case by irreducibility, and 
this gives 
$6_6$, $6_7$ in Table \ref{tab:handlebodylinks}, respectively.
\item 
For G$4_4$ and G$4_5$ in Table \ref{tab:graphs}, again by 
irreducibility, there is only one way to add the Hopf link in each case, 
which gives us $6_{10}$ and $6_{11}$ in Table \ref{tab:handlebodylinks},
respectively.
\end{itemize}

 
We summarize the discussion above in the following: 
\begin{lemma}\label{lm:two_connected_IH_minimal}
A non-split, irreducible handlebody link admitting an 
IH-minimal diagram with $2$-connectivity and crossing number $\leq 6$ 
is equivalent, up to mirror image, to one of the following handlebody links: 
\begin{equation}\label{list:handlebody_link_type_2}
 4_1, 5_2, 6_4, 6_5, 6_6, 6_7, 6_8, 6_{10}, 6_{11}, 6_{12}, 6_{13}, 6_{14}.
\end{equation}  
\end{lemma}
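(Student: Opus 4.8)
The plan is to exploit the decomposition recalled immediately before the statement: an IH-minimal diagram $D$ with $2$-connectivity splits, by cutting along decomposing circles, as an order-$2$ vertex connected sum of a spatial-graph summand $G$ carrying the two trivalent vertices (this is the genus $2$ part, since we are after $(n,1)$-handlebody links) together with several link summands $L_i$. Because $D$ is IH-minimal and hence in particular R-minimal, each induced summand is itself R-minimal; this is exactly what lets me feed $G$ and the $L_i$ into the classifications already in hand. Thus $G$ may be taken from the list of spatial graphs of Table \ref{tab:graphs} via Lemma \ref{lm:three_connected_R_minimal}, while each $L_i$ is a link admitting an R-minimal $4$-connected diagram.

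First I would pin down the combinatorics. The simplest $4$-connected diagram is the Hopf link with two crossings, so with $c(D)\le 6$ there are at most three link summands; combined with the possible parenthesizations of the iterated knot sum, this gives precisely the seven configurations listed above. I would also use that an R-minimal $4$-connected diagram with at most four crossings represents one of the Hopf link, the trefoil, the figure eight, or Solomon's link L4a1, each of which is prime; this keeps the admissible $L_i$ finite and justifies treating each summand as a prime link.

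Next I would carry out the case analysis configuration by configuration, in each instance combining the crossing budget with the requirement that the resulting handlebody link be irreducible. The three-summand configurations force $G$ to be crossingless, hence the trivial theta curve $\op{G0_1}$, and every $L_i$ to be a Hopf link, and irreducibility leaves only the symmetric placement of one ring on each of its three arcs. The two-summand configurations force $c(G)\le 2$, so $G=\op{G2_1}$, and irreducibility dictates where the two rings may be inserted, yielding $6_{12},6_{13},6_{14}$. For the single-summand configuration I would split into subcases $c(G)\in\{2,3,4\}$: the case $c(G)=2$ gives $4_1$ and $6_8$; the case $c(G)=3$, with $G=\op{G3_1}$, gives $5_1$; and the case $c(G)=4$, with $G$ one of $\op{G4_1},\dots,\op{G4_5}$, yields $6_4,6_5,6_6,6_7,6_{10},6_{11}$. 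Collecting the survivors produces the stated list.

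The main obstacle I anticipate is the bookkeeping inside each case. The knot sum is a set-valued operation, depending on the choice and orientation of the trivial ball-arc pair, so for each fixed $G$ and $L_i$ I must enumerate all genuinely inequivalent placements of the link summand, discard those that yield reducible or split handlebody links using the criteria of Section \ref{sec:uniqueness}, and identify the remaining outcomes with entries of Table \ref{tab:handlebodylinks} up to IH-moves and mirror image. Ensuring that distinct placements do not secretly coincide, and conversely that the survivors are pairwise inequivalent, is where I would invoke the invariants of Table \ref{tab:uniqueness} and the uniqueness and irreducibility results already established.
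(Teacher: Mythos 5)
Your proposal follows essentially the same route as the paper: decompose the $2$-connected IH-minimal diagram into one $3$-connected spatial-graph summand $G$ (drawn from Table \ref{tab:graphs} via Lemma \ref{lm:three_connected_R_minimal}) plus at most three $4$-connected link summands, enumerate the seven parenthesized configurations, and sieve the placements by irreducibility. The only point worth flagging is that your three-summand case correctly yields $6_{15}$, which---together with $5_1$ in place of the misprinted $5_2$---also appears in the paper's own case analysis even though the displayed list in the lemma omits it.
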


By Lemma \ref{lm:three_connected_IH_minimal}, if any of  
\eqref{list:handlebody_link_type_2}  
admits an IH-minimal diagram with $3$-connectivity, 
it is equivalent to one of
$6_1,6_2,6_3$, $6_9$, 
while by Lemma \ref{list:handlebody_link_type_2} 
if $6_1,6_2,6_3$ or $6_9$ admits an
IH-minimal diagram with $2$-connectivity and 
less than $6$ crossings, it is equivalent to   
$4_1$ or $5_1$, but neither situation can happen by Theorem \ref{teo:uniqueness}.
\begin{corollary}
Diagrams in Table \ref{tab:handlebodylinks} are all IH-minimal.
\end{corollary}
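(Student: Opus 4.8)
The plan is to argue by contradiction, playing the two completeness lemmas off against each other through the uniqueness theorem. Recall first that each entry in Table \ref{tab:handlebodylinks} is non-split and irreducible (Theorems \ref{teo:unsplittability} and \ref{teo:irreducibility}), so every one of its minimal diagrams has $2$- or $3$-connectivity. Moreover, the completeness analysis realizes each entry by a concrete displayed diagram: $6_1,6_2,6_3,6_9$ arise from $3$-connected diagrams with six crossings (Lemma \ref{lm:three_connected_IH_minimal}), while the remaining entries arise from the $2$-connected configurations of Lemma \ref{lm:two_connected_IH_minimal}, whose crossing numbers are $4$ (only $4_1$), $5$ (only $5_1$), and $6$ (all the others). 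What must be shown is that each such displayed diagram actually realizes $c(\HL)$.

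So suppose some entry $\HL$ had a displayed diagram $D_0$ with $c_0 = c(D_0)$ crossings that is \emph{not} IH-minimal. Then $\HL$ would possess an IH-minimal diagram $D$ with $c(D) = c(\HL) < c_0 \le 6$, hence $c(D) \le 5$. Since $\HL$ is non-split and irreducible, $D$ has $2$- or $3$-connectivity. The case of $3$-connectivity is excluded immediately by Lemma \ref{lm:three_connected_IH_minimal}, which forces $c(D) \ge 6$ and contradicts $c(D) \le 5$. Hence $D$ is an IH-minimal $2$-connected diagram with fewer than six crossings.

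At this point I would invoke Lemma \ref{lm:two_connected_IH_minimal}, in the sharpened form coming from the crossing-number bookkeeping of its proof: an IH-minimal $2$-connected diagram with fewer than six crossings represents, up to mirror image, either $4_1$ (when $c(D)=4$) or $5_1$ (when $c(D)=5$), and none exists with three crossings or fewer. I then split according to which entry $\HL$ is. If $\HL$ is one of the $6_i$, then $\HL$ would be equivalent to $4_1$ or $5_1$, contradicting Theorem \ref{teo:uniqueness}. If $\HL = 4_1$, then $c(D) \le 3$, which is impossible by the same lemma. If $\HL = 5_1$, then $c(D) = 4$ and $\HL$ would be equivalent to $4_1$, again contradicting Theorem \ref{teo:uniqueness}. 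In every case we reach a contradiction, so no displayed diagram can fail to be IH-minimal.

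The step I expect to carry the real weight is extracting from Lemma \ref{lm:two_connected_IH_minimal} the sharpened statement that a $2$-connected IH-minimal diagram of crossing number $c<6$ forces $\HL$ to be $4_1$ (for $c=4$) or $5_1$ (for $c=5$), with nothing below four crossings. This is not the bare statement of the lemma but a reading of its constructive proof, where the smallest admissible $2$-connected configuration producing an irreducible link is $\op{G2_1}\#(\text{Hopf})$ with four crossings. Once that refinement is recorded, the contradiction is driven entirely by the pairwise inequivalence supplied by Theorem \ref{teo:uniqueness}, and the remaining case bookkeeping is routine.
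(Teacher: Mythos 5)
Your argument is correct and is essentially the paper's own proof: the paper likewise plays Lemma \ref{lm:three_connected_IH_minimal} against Lemma \ref{lm:two_connected_IH_minimal} (implicitly using the same sharpened, crossing-number-graded reading of the latter, namely that a $2$-connected IH-minimal diagram with fewer than six crossings yields only $4_1$ or $5_1$) and closes the loop with the pairwise inequivalence of Theorem \ref{teo:uniqueness}. Your write-up merely makes the case bookkeeping more explicit than the paper's terse two-sentence version.
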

\nada{
\begin{corollary}
List \ref{list:handlebody_link_type_2} 
enumerates all handlebody links of type $2$ up to $6$ crossings
and their diagrams in Fig.\ \ref{tab:handlebodylinks}
are minimal.
\end{corollary} 
}





 

\section{Chirality}\label{sec:chirality}
\subsection{Decomposable links}
We consider  
order-$2$ connected sum (compare with Definition \ref{def:one_sum}) 
for handlebody-link-disk pairs. 
A handlebod-link-disk pair is a handlebody link $\HL$
with an oriented incompressible disk $D\subset \HL$. 
A trivial knot with a meridian disk 
is considered as the trivial handlebody-link-disk pair.
\begin{definition}[\textbf{Order-2 connected sum}]\label{def:two_sum}
Given two handlebody-link-disk pairs 
$(\HL_1,D_1)$, $(\HL_2,D_2)$ 
the order-2 connected sum $(\HL_1,D_1)\#(\HL_2,D_2)$
is obtained as follows: first choose 
a $3$-ball $B_i$ of $D_i$ in $\sphere$ for each $i$ 
with $\mathring{B_i}\cap \HL_i$ a tubular neighborhood 
$N(D_i)$ of $D_i$ in
$\HL_i$; next, identify $\overline{N(D_i)}$ with $D_i\times [0,1]$ via
the orientation of $D_i$. Then $(\HL_1,D_1)\#(\HL_2,D_2)$
is given by removing $\mathring{B_i}$ and
gluing the resulting manifolds via an orientation-reversing
homeomorphism:
\[
h: \partial (\Compl{B_1}) \rightarrow \partial (\Compl{B_2})\quad
\textbf{with $h(D_1\times\{j\})=D_2\times\{k\}$, $k\equiv j+1$ mod $2$}.
\]
A non-split, irreducible handlebody link
is \textit{decomposable} if it is 
equivalent to an order-2 connected sum of non-trivial handlebody-link-disk pairs.
\end{definition}
\begin{figure}[ht] 
\def\svgwidth{0.9\columnwidth}
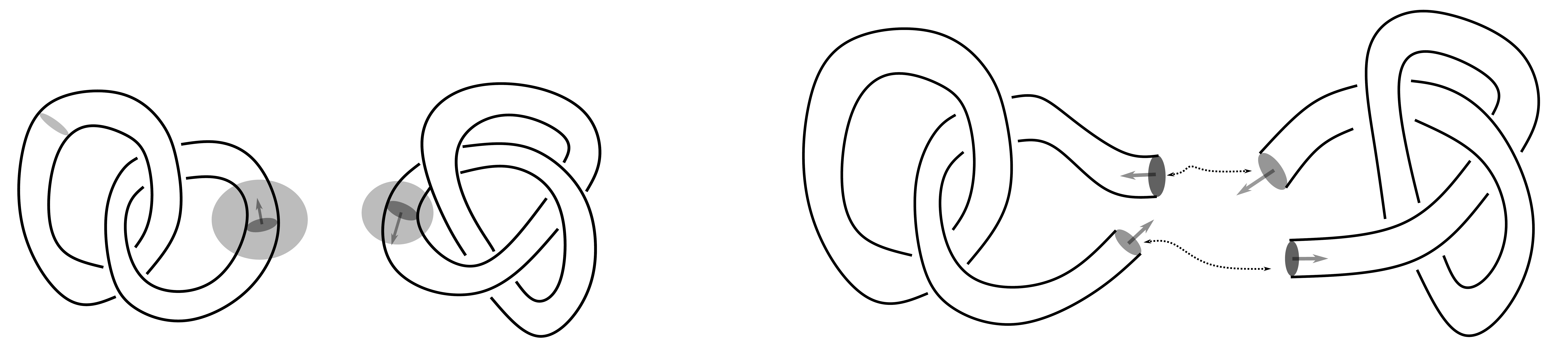   
\caption{Knot sum of handlebody-link-disk pairs}
\label{fig:knotsum}
\end{figure}
Decomposability is reflected in 
minimal diagrams in most examples here, thus we state the following conjecture.  
\begin{conjecture}\label{conj:decomp_is_of_type_2}
Minimal diagrams of a decomposable handlebody link 
have $2$-connectivity.
\end{conjecture}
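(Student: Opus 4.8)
The plan is to separate the statement into an easy existence half and a hard rigidity half, and to recognize the rigidity half as a crossing-number-additivity problem. By Definition~\ref{def:two_sum}, a decomposable $\HL = (\HL_1,D_1)\#(\HL_2,D_2)$ carries a $2$-sphere $\Stwo=\partial B$ in $\sphere$ meeting $\HL$ in exactly the two disks $D_i\times\{0\}$ and $D_i\times\{1\}$; deleting these two disks from $\Stwo$ leaves an annulus $A\subset\Compl{\HL}$ whose incompressibility and non-boundary-parallelism should be forced by the non-triviality of the two pairs. First I would record the easy inequality: gluing a minimal diagram of each summand along the projection of $\Stwo$ yields a diagram of $\HL$ carrying a decomposing circle, hence of $2$-connectivity, so $\HL$ always admits \emph{some} $2$-connectivity diagram. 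Because a non-split, irreducible handlebody link has no diagram of connectivity $0$ or $1$, the task reduces to ruling out minimal diagrams of connectivity strictly greater than $2$; in the principal case, where a spine carries a trivalent vertex, the connectivity never exceeds $3$, so only the value $3$ has to be excluded.

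To rule out $3$-connectivity, fix an arbitrary minimal diagram $D$ with projection sphere $\mathbb S^2\subset\sphere$, and try to isotope the essential annulus $A$ into a position standard with respect to the projection: $A\cap\mathbb S^2$ a single circle $C$, with $A$ contained in a bicollar of $\mathbb S^2$ away from $C$. Then $C$ would cross $D$ in exactly two arcs and serve as a decomposing circle, giving $2$-connectivity. The mechanism would be a Menasco-type argument on the intersection $A\cap\mathbb S^2$: use incompressibility of $A$ to surger away innermost trivial circles and the trivial-ball-arc structure of the two pairs to eliminate the remaining arcs, at each step invoking minimality of $D$ to guarantee that no surgery forces an increase of crossings. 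Equivalently, the whole statement can be recast as super-additivity of the crossing number, $c(\HL)\ge c_1+c_2$ for suitable crossing counts $c_i$ of the two pairs; combined with the easy sub-additivity this would give equality, and a minimal diagram realizing the sum would then have to split along a decomposing circle.

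The hard part is precisely this super-additivity, that is, the standard-position step. Crossing-number additivity under connected sum is a famous open problem already for classical knots, and the order-$2$ vertex connected sum is a direct generalization of it; a fully general proof therefore seems out of reach with current techniques, which is the likely reason the statement is offered only as a conjecture. What I would realistically aim for is the restricted version for diagrams carrying extra structure---alternating or adequate diagrams---where the span of the Kauffman bracket (or Khovanov width) bounds the crossing number from below and behaves additively under connected sum, and where Menasco's incompressible-surface machinery is genuinely available to place $A$ in standard position and read off $C$. A secondary subtlety, even granting additivity, is that the conjecture asserts that \emph{every} minimal diagram---not merely a well-chosen one---has $2$-connectivity, so one must still argue that the decomposing sphere is detectable inside an arbitrary minimal diagram.
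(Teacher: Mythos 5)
This statement is not proved in the paper: it is stated as Conjecture~\ref{conj:decomp_is_of_type_2}, supported only by the empirical remark that ``decomposability is reflected in minimal diagrams in most examples here.'' There is therefore no proof in the paper to compare yours against, and your proposal does not supply one either. Your own text concedes the decisive step --- isotoping the decomposing annulus $A$ into standard position with respect to the projection sphere of an \emph{arbitrary} minimal diagram, equivalently the super-additivity $c(\HL)\ge c(\HL_1)+c(\HL_2)$ --- and this concession is exactly where the gap lies. The ``easy half'' you establish (sub-additivity, hence the existence of \emph{some} $2$-connectivity diagram) is correct but does not touch the conjecture, which quantifies over all minimal diagrams; and the Menasco-type machinery you invoke for the hard half is only known to work for alternating or adequate diagrams, whereas the diagrams here carry no such structure. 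So as a proof the proposal fails, though your diagnosis of \emph{why} it fails is sound and is presumably the same reason the authors left the statement as a conjecture rather than a theorem.

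Two smaller points. First, your reduction ``only the value $3$ has to be excluded'' silently assumes the spine has a trivalent vertex; the conjecture as stated also covers decomposable links all of whose components are solid tori (order-$2$ sums along meridian disks), where $4$-connectivity is possible and the problem literally contains crossing-number additivity for composite links. Second, note that the related Conjecture~\ref{conj:crossings_additivity} in Section~\ref{sec:reducible} is the order-$1$ analogue of the additivity you need, and the authors explicitly flag it as ``a reminiscence of a one-hundred years old problem in knot theory'' --- consistent with your assessment that a general argument is currently out of reach.
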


\begin{lemma}\label{lm:critera_decomposable} 
For a non-split, irreducible handlebody link $\HL$,  
the following statements are equivalent:\\ 
\textbullet\ $\HL$ is decomposable;\\
\textbullet\ 
there exists a $2$-sphere $\Stwo$ in $\sphere$
such that $\Stwo$ and $\HL$ intersect at two incompressible 
disks in $\HL$
and neither of $\overline{B_i\setminus \HL}$, $i=1,2$,
is a solid torus,
where $B_1, B_2$ are components of $\Compl{\Stwo}$;\\
\textbullet\      
$\Compl{\HL}$ admits an incompressible, 
non-boundary parallel (or $\partial$-incompressible)
annulus $A$ with $\partial A$ inessential in $\HL$. 
\end{lemma}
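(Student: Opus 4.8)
The plan is to prove the two equivalences (i)$\Leftrightarrow$(ii) and (ii)$\Leftrightarrow$(iii), where I label the three bulleted statements (i) (decomposability), (ii) (the separating sphere meeting $\HL$ in two disks), and (iii) (the annulus in $\Compl{\HL}$) in the order listed. The guiding idea is that all three describe a single geometric object, namely a $2$-sphere meeting $\HL$ in two incompressible meridian disks, viewed successively as the gluing locus of an order-$2$ connected sum, as an embedded sphere, and (after deleting the two disks) as a properly embedded annulus $A\subset\Compl{\HL}$. Throughout I would use that $\HL$ is non-split and irreducible, and I would normalize $\Stwo$ (resp.\ $A$) to be incompressible and $\partial$-incompressible with $\#(\Stwo\cap\HL)$ minimal by innermost-disk arguments.

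For (i)$\Leftrightarrow$(ii) I would identify the gluing sphere $\Stwo$ of a decomposition $\HL=(\HL_1,D_1)\#(\HL_2,D_2)$ with the sphere of (ii): it meets $\HL$ in two parallel copies of $D_1=D_2$, which are incompressible because $D_i$ is an essential disk of the pair. The whole content of this equivalence is the claim that a summand $(\HL_i,D_i)$ is the trivial pair if and only if $\HL\cap B_i$ is a regular neighborhood of an unknotted properly embedded arc in the ball $B_i$, equivalently if and only if $\overline{B_i\setminus\HL}$ is a solid torus. In the forward direction, removing the meridian slab from the unknotted solid torus leaves an unknotted arc-neighborhood whose complement in the ball is a solid torus; conversely, if $\overline{B_i\setminus\HL}$ is a solid torus, its torus boundary forces $\HL\cap B_i$ to be a single arc-neighborhood, solid-torus recognition (via Dehn's lemma) makes the arc unknotted, and capping yields the trivial pair, the capping disk being a meridian since an incompressible disk in a solid torus is a meridian disk. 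Hence both summands are non-trivial exactly when neither $\overline{B_i\setminus\HL}$ is a solid torus.

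For (ii)$\Leftrightarrow$(iii) I would pass between $\Stwo$ and $A:=\Stwo\cap\Compl{\HL}=\Stwo\setminus(d_0\cup d_1)$. Going $\Rightarrow$: the curves $\partial A=\partial d_0\cup\partial d_1$ bound the disks $d_0,d_1$ in $\HL$, so $\partial A$ is inessential in $\HL$; a compression of $A$ would cut it into two disks capping $d_0,d_1$ into spheres each meeting $\HL$ in a single incompressible disk, contradicting irreducibility, so $A$ is incompressible; and a boundary-parallelism region $A\times I$ would exhibit one of the $\overline{B_i\setminus\HL}$ as a solid torus, contradicting (ii), so $A$ is non-boundary-parallel. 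Going $\Leftarrow$: given such an $A$, I would use that $\partial A$ is inessential in $\HL$ to bound each of its components by a disk in $\HL$, and the loop theorem/Dehn's lemma to take these disks $\delta_0,\delta_1$ embedded and disjoint, with essentiality following from incompressibility and non-boundary-parallelism of $A$; then $\Stwo:=A\cup\delta_0\cup\delta_1$ is the required sphere, and non-boundary-parallelism of $A$ rules out either complementary region from being a solid torus, which is precisely (ii).

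The hardest step will be the dictionary "$A$ non-boundary-parallel $\Leftrightarrow$ neither side a solid torus,'' where one must rule out degenerate configurations: a boundary-parallel $A$ coexisting with extra handlebody components trapped on one side of $\Stwo$, and inefficiently positioned spheres. Here I expect to lean on non-splittability to forbid trapped components and on irreducibility to straighten the intersection with $\HL$, after the normalization mentioned above. The companion equivalence "trivial pair $\Leftrightarrow$ solid-torus side'' is conceptually routine but still requires the torus-boundary constraint to guarantee that only a single solid-torus component of $\HL$ can meet such a side, so that $\HL\cap B_i$ really is a lone unknotted arc-neighborhood.
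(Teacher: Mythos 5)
The paper's own ``proof'' of this lemma is the single sentence ``This follows from the definition of ($\partial$-)incompressibility,'' so your two-step plan --- (i)$\Leftrightarrow$(ii) via the gluing sphere of the order-$2$ sum, and (ii)$\Leftrightarrow$(iii) via the correspondence $A=\Stwo\cap\Compl{\HL}$ --- is precisely the unpacking the authors leave to the reader, and its structure is correct. In particular your compression argument (a compressing disk for $A$ caps each $\delta_i$ into a sphere meeting $\HL$ in a single incompressible disk, contradicting irreducibility), your dictionary ``trivial pair $\Leftrightarrow$ unknotted arc-neighborhood in a ball $\Leftrightarrow$ solid-torus side,'' and your use of the torus boundary to exclude extra components of $\HL$ inside $B_i$ are all sound.

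The one place where your plan does not yet close is the step you yourself flag as hardest, namely ``$\overline{B_i\setminus\HL}$ a solid torus $\Rightarrow A$ boundary-parallel'': non-splittability and irreducibility, the tools you propose to lean on there, are not what finishes it. If $V=\overline{B_i\setminus\HL}$ is a solid torus, then $A$ is an annulus in the torus $\partial V$ whose core is essential there (otherwise a component of $\partial A$ bounds a disk in $\partial\HL$, contradicting incompressibility of the $\delta_i$, resp.\ of $A$), hence a $(p,q)$-curve of $V$ with $q\neq 0$ since $A$ is incompressible; but $A$ is parallel through $V$ to the complementary annulus $F=\partial V\cap\partial\HL$ only when $|q|=1$, so the cabled case $|q|\geq 2$ must be excluded. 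This follows because $B_i=V\cup_F(\HL\cap B_i)$ is a $3$-ball and the core of $F$ bounds the disk $\delta_0$ inside $\HL\cap B_i$: van Kampen gives $\pi_1(B_i)\cong(\Z/q)\ast\pi_1(\HL\cap B_i)=1$, forcing $q=\pm1$. With that argument inserted (and the analogous remark that incompressibility of $D_i\times\{0\}$ in the glued-up $\HL$ follows from non-triviality of the opposite summand), your proof is complete.
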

\begin{proof}
This follows from the definition of ($\partial$-) incompressibility.
\end{proof}
The annulus $A$ in Lemma \ref{lm:critera_decomposable} 
is called a decomposing annulus of $\HL$. Similarly,
a decomposiing annulus $A$ of a handlebody-link-disk
pair $(\HL,D)$ is 
an incompressible, non-boundary parallel 
annulus $A$ with $\partial A$ inessential in $\HL$ and
disjoint from $\partial D$; note that $\HL$
in $(\HL,D)$ could be reducible. 

We now prove a unique decomposition theorem for handlebody links
with no genus $g>2$ component; a unique 
decomposition theorem for handlebody knots of arbitrary genus 
is given \cite[Appendix $B$]{KodOzaGor:15} (see also \cite{IshKisOza:15}).  
\begin{theorem}\label{teo:uniqueness_decomposition_special_case}
Given a non-split, irreducible handlebody link $\HL$,
suppose no component of $\op{HL}$ has genus greater than 2, and 
$A,A'$ are decomposing annuli inducing  
\begin{equation}\label{eq:two_decomp_of_hl}
\HL\simeq (\HL_1,D_1)\#(\HL_2,D_2), \;
\HL\simeq (\HL_1',D_1')\# (\HL_2',D_2'),  \textbf{ respectively}.
\end{equation}
If $(\Compl{\HL_i},D_i),i=1,2$, 
admits no decomposing annulus.
Then $A,A'$ are isotopic, in the sense that 
there exists an ambient isotopy $f_t:\sphere\rightarrow \sphere$ 
fixing $\op{HL}$ with $f_1(A)=A'$.  
\end{theorem}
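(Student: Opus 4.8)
The plan is to work in the complement $M := \Compl{\HL}$, which is irreducible because $\HL$ is non-split: any $2$-sphere in $M$ has all of $\HL$ on one side, hence bounds a ball in $M$. By Lemma \ref{lm:critera_decomposable} the decomposing annuli $A$ and $A'$ are properly embedded, incompressible and $\partial$-incompressible annuli in $M$ (in particular non-boundary-parallel), each with boundary a pair of meridian curves on $\partial \HL$ that are inessential in $\HL$. The overall strategy is the standard one for uniqueness of incompressible-surface decompositions: first isotope $A'$, through an ambient isotopy fixing $\HL$ setwise, so that $A\cap A'=\emptyset$; then cut $M$ along $A$ and use the hypothesis that the factors carry no decomposing annulus to force $A'$ to be parallel to $A$.

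First I would put $A$ and $A'$ in transverse minimal position, minimizing $\#(A\cap A')$ together with $\#(\partial A\cap\partial A')$ over the isotopy class, and then eliminate intersections in two steps. For an innermost circle $c$ of $A\cap A'$ on $A'$, bounding a disk $\Delta\subset A'$: by incompressibility of $A$ the curve $c$ also bounds a disk on $A$ (a core-essential intersection circle is impossible, as it would compress $A$ or $A'$), so $\Delta$ together with a subdisk of $A$ forms a $2$-sphere, which bounds a ball by irreducibility of $M$; pushing $A$ across this ball removes $c$, contradicting minimality, so there are no circle components. For the arcs, an outermost arc of $A\cap A'$ on $A'$ cuts off a $\partial$-compressing disk for $A$; since $A$ is $\partial$-incompressible this arc is inessential and can be removed by isotopy, again contradicting minimality. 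After these reductions $\partial A$ and $\partial A'$ are disjoint and $A\cap A'=\emptyset$. Here I would also use that $\partial A,\partial A'$ are meridians, so the separating disks they bound in the (genus $\le 2$) components of $\HL$ can be arranged disjointly; this is exactly where the hypothesis that no component has genus greater than $2$ enters, keeping the pieces obtained by cutting along the capping disks simple (balls and solid tori) so that the boundary-minimization terminates cleanly.

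With $A\cap A'=\emptyset$, the connected annulus $A'$ lies in one of the two pieces of $M$ cut along $A$; by the decomposition \eqref{eq:two_decomp_of_hl} this piece is the complement of one factor $(\Compl{\HL_i},D_i)$, with $A$ appearing as part of its boundary, and there $A'$ is still incompressible. If $A'$ were also non-$\partial$-parallel in the piece, it would be a decomposing annulus of $(\Compl{\HL_i},D_i)$, contradicting the hypothesis that this factor admits none. Therefore $A'$ is $\partial$-parallel in the piece, i.e.\ it cobounds a product region $A'\times[0,1]$ with an annulus $R$ on the boundary of the piece. I would then argue that $R$ cannot lie entirely in $\partial \HL$: if it did, $A'$ would be boundary-parallel in all of $M$, contradicting that $A'$ is a decomposing annulus; here again the meridional boundary condition is used. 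Hence $R$ contains the copy of $A$, and the product region realizes an isotopy carrying $A'$ onto $A$; extending it to an ambient isotopy of $\sphere$ fixing $\HL$ gives $f_1(A)=A'$.

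The main obstacle I expect is the boundary analysis in the second step together with the $\partial$-parallelism case distinction in the third: controlling $\partial A\cap\partial A'$ requires understanding how two systems of meridian disks sit inside the components of $\HL$, and ruling out that $A'$ is $\partial$-parallel to a spurious boundary annulus $R\subset\partial\HL$ rather than across $A$. Both points are where the genus $\le 2$ restriction does real work, since it reduces the relevant disk complements to solid tori and balls and thereby forecloses the more complicated configurations that necessitate the general-genus machinery of \cite{KodOzaGor:15}.
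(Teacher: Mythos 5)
There is a genuine gap, and it sits exactly where the theorem's hypotheses are supposed to do their work. Your reduction of $A\cap A'$ disposes only of intersection components that are \emph{inessential} in $A'$: an innermost circle bounding a disk in $A'$, or an outermost arc cutting a disk off $A'$. The parenthetical claim that ``a core-essential intersection circle is impossible, as it would compress $A$ or $A'$'' is false: a circle that is essential (core-parallel) in \emph{both} annuli produces no compressing disk for either, and likewise a spanning arc of $A\cap A'$ that is essential in both annuli produces no $\partial$-compressing disk. Incompressibility and $\partial$-incompressibility only force each component of $A\cap A'$ to be essential in both or inessential in both; they do not exclude the essential-in-both configurations, which are the generic ones. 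Note also that you cannot fall back on the usual ``essential surfaces in a $\partial$-irreducible manifold'' machinery, since $\Compl{\HL}$ need not be $\partial$-irreducible (the paper's $6_9$ is an explicit example). The paper's proof spends most of its length on precisely these two cases: for an essential circle it forms the annulus $\hat{R}=R\cup R'$ from subannuli of $A$ and $A'$ and invokes the hypothesis that the factors admit no decomposing annulus to conclude $\hat R$ is parallel to $A$ or $\partial$-parallel, either of which reduces the intersection count; for essential arcs it takes two adjacent spanning arcs of $A\cap A'$, forms the annulus $\hat A=D\cup D'$ inside the genus-one side, and uses the genus $\le 2$ restriction together with irreducibility to rule out every configuration of $\partial\hat A$ on $\partial\HL$. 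None of this appears in your proposal.

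A secondary issue: even in the inessential-arc case, ``can be removed by isotopy'' is too quick. The disk $\hat D=D\cup D'$ glued from the two subdisks is properly embedded with boundary on $\partial\HL$, and that boundary may be \emph{essential} in $\partial\HL$; one cannot then push across a ball. The paper handles this by showing an essential $\partial\hat D$ would force $\HL$ to be reducible (again using the genus $\le 2$ hypothesis to locate $\hat D$ relative to $D_2$), a contradiction. Your final step --- disjoint annuli are parallel by the no-decomposing-annulus hypothesis, and the product region gives the ambient isotopy --- agrees with the paper's opening observation and is fine; but as written the proposal assumes away the entire substance of the argument, namely the elimination of essential intersections.
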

\begin{proof}
Note first that if $A$, $A'$ are disjoint,
then the assumption implies that they must be parallel and hence isotopic. 
Suppose $A\cap A' \neq \emptyset$.
Then we isotopy $A$ such that 
the number of components of $A\cap A'$ is minimized. 

\noindent
\textbf{Claim: any circle or arc in $A\cap A'$ is essential in both $A$ and $A'$.}
Observe first that a 
circle component $C$ of $A\cap A'$
is either essential 
or inessential in both $A$ and $A'$, 
for assuming otherwise
would contradict the incompressibility of $A,A'$. 
%
%
%
Suppose $C$ is inessential in both $A$ and $A'$, and  
is innermost in $A'$. Then $C$ bounds disks $D,D'$ 
in $A,A'$, respectively. 
Since $\HL$ is non-split, 
$D\cup D'$ bounds a $3$-ball $B$ in
$\Compl{\HL}$. Isotopy $D$ across $B$ to $D'$
induces a new annulus $A$ isotopic to the original one with
$A \cap A'$ having less components, contradicting the minimality.

Similarly, if $l$ is an arc component of $A \cap A'$, 
then it is either essential or inessential in both $A$ and $A'$,
for assuming otherwise would contradict the $\partial$-incompressibility
of $A, A'$.
Suppose $l$ is inessential in both $A$ and $A'$ and 
an innermost arc in $A'$.
Then $l$ cuts off a disk $D'$ from $A'$
and a disk $D$ from $A$. 
Let $\hat{D}:=D\cup D'$.
If $\partial \hat{D}$
is inessential, then we can remove the intersection $l$
by isotopying $A$ across the ball bounded by $\hat{D}$
and the disk bounded by 
$\partial \hat{D}$ in $\partial \op{HL}$, contradicting the minimality.
If $\partial \hat{D}$ is essential,
then isotopying $\hat{D}$, we can disjoin $\hat{D}$ from $A$. 
 
Now, it may be assumed that $D_1$ 
is in a genus one component of $\op{HL}_1$, 
and hence $D_2$ is in a component of $\op{HL}_2$ 
with genus $\leq 2$. 
Since $\partial \hat{D}$ is essential, 
$\hat{D}$ has to be in a genus $2$
component of $\op{HL}_2$ containing $D_2$. 
Because $\partial\hat{D}\cap D_2=\emptyset$, 
if $\partial \hat{D}$ is essential on the boundary of
the embedded solid torus $\big(\HL_2\setminus N(D_2)\big)\subset\sphere$, $\partial \hat{D}$ would be its longitude, where $N(D_2)$ is a tubular neighborhood of $D_2$, 
disjoint from $\hat{D}$, in $\op{HL}_2$.
Particularly, $\HL_2$ and therefore $\HL$ would be reducible,
a contradiction. 
On the other hand, if
$\partial \hat{D}$ bounds 
a disk on $\partial \big(\HL_2\setminus N(D_2)\big)$ 
that contains some components 
of $\partial\overline{N(D_2)}$,  
then $\partial \hat{D}$
is inessential in $\HL_2$, and hence in $\HL$, again
contradicting the irreducibility of
$\HL$.

The claim is proved, which also implies 
$A \cap A'$ contains either circles or arcs. 

\noindent
\textbf{No essential circles.}
Suppose $C$ is an essential circle,
and a closest circle to $\partial A'$.
Let $R'$ be the annulus cut off by $C$ from $A'$ 
with $A\cap R'=C$ and 
$R$ an annulus cut off by $C$ from $A$.
We isotopy the incompressible annulus 
$\hat{R}:=R\cup R'$ away from $A$. Since
components of $\partial \hat{R}$ are inessential in $\op{HL}$, 
by the assumption, $\hat{R}$ is either parallel to $A$ or 
boundary-parallel. In the former case, 
replacing $A$ with $R$ leads to a contradiction 
since $R\cap A'$ has less components
than $A\cap A'$.
In the latter case,   
isotopying $R$ through the solid torus
$V$ bounded by $\hat{R}$ and the part of $\partial \HL$
parallel to $\hat{R}$ gives a new $A$ 
isotopic to the original one but with less components in $A\cap A'$, 
contradicting the minimality.

\noindent
\textbf{No essential arcs.}     
Suppose $l_1$ is an essential arc. Then choose
the essential arc $l_2$ next to $l_1$ in $A'$ 
such that the disk $D'$ cut off by 
$l_1,l_2$ from $A'$ has $D'\cap A=l_1\cup l_2$ and 
is on the side of $A$ containing components of $\op{HL}_1$.
Let $D$ be a disk cut off by $l_1,l_2$ from $A$.
It may be assumed, by pushing $D$ away from $A$, 
that $D\cup D'$ is disjoint from $A$, and hence
is on the genus $1$ complement of $\op{HL}_1$ containing $D_1$.
Since $\hat{A}:=D\cup D'$ is disjoint from $D_1$, it 
is necessarily an annulus,
for if it were a M\"obious band,  
we would get a non-orientable surface embedded in $\sphere$.
%
Furthermore, each component of $\partial \hat{A}$ is 
necessarily inessential in $\op{HL}_1$, so
it either bounds a meridian disk
or is inessential in $\partial \op{HL}_1$.
Note also it cannot be the case that one component of $\partial \hat{A}$
is essential in $\partial\op{HL}_1$
and the other inessential by the irreducibility of $\op{HL}$.
Suppose both components are inessential in $\partial \op{HL}_1$.
Then $\hat{A}$, together with disks on $\partial \op{HL}_1$
bounded by $\partial \hat{A}$, bounds a $3$-ball, with which
we can isotopy $A$ to remove the intersection $l_1,l_2$,
contradicting the minimality.
Suppose both components bound meridian disks in $\op{HL}_1$.
Then $\tilde{A}=D^c\cup D'$ has 
$\partial \tilde{A}$ inessential in $\partial \op{HL}_1$, where
$D^c=\overline{A\setminus D}$.
Thus we reduce it to the previous case.
\nada{
Then $\hat{A}$ is incompressible and hence it either
parallel to $A$ or boundary-parallel by the assumption,
contradicting the minimality. 
}
\nada{
Then we replace $D$ with the closure of another component
of $\overline{A\setminus l_1\cup l_2}$ and
reduce it to the previous case (i.e. 
components of $\partial \hat{A}$ are  
inessential in $\partial \op{HL}_1$).
}
\nada{
component $M$ of $\Compl{\HL}\setminus A_1$ with 
$M\cap \partial \HL$ are some tubes.
Let $D',D''$ be the disks of $A_1\setminus l\cup l'$,
and $A', A''$ be the annuli $D\cup D'$, $D\cup D''$.
Then either components of $\partial A'$ or components of $\partial A''$
are essential in $\partial \HL$, say $\partial A'$. 
Let $P,P'\subset \partial \HL$ be disks bounded by $\partial A'$.
Then $A'\cup P cup P'$ bounds a $3$-ball $B $in $\Compl{\HL}$;
isotopy $D'$ across $B$ to $D$ gives us a new annulus $A_1'$
with $A_1'\cap A_2$ has less components than $A_1\cap A_2$. 
}
\end{proof}

\draftMMM{That would be nice, however to make the completeness proof work we need to define type-2 as
those handlebody links that admit a *minimal* diagram with edge connectivity two, which unfortunately
is a bit different: we could have a type-2 with the geometric definition having no minimal diagram
with edge connectivity 2!}
\draftYYY{That is right; we have no proof for that.
On the other hand, if I am not mistaken,
those handlebody links (not reducible, not splittable)
that do not have such a minimal diagram should be recovered by the code, right?
That is $6_1$, $6_2$, $6_3$, and $6_4$ are potentially minimal
diagrams of some 2-sums.
}
\draftMMM{Yes, indeed.  Is there a way to prove that this is not the case?}
\draftYYY{I am also wondering; 
it would be great if we could add this to the main text. 
One possible way is to prove it via invariant that are additive with respect 
to 2-sum, maybe we could try using Alexander invariant!?}

\subsection{Chirality}
We divide the proof of 
Theorem \ref{teo:chiral_hl} into two lemmas.

\begin{lemma}\label{lm:achiral}
All handlebody links except for  
$5_1$, $6_3$, $6_6$, $6_7$, $6_8$, $6_{10}$   
in Table \ref{tab:handlebodylinks} are achiral.
\end{lemma}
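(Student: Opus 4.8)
The plan is to show, for each handlebody link $\HL$ among $4_1, 6_1, 6_2, 6_4, 6_5, 6_9, 6_{11}, 6_{12}, 6_{13}, 6_{14}, 6_{15}$, that $\HL$ is ambient isotopic to its mirror image $r\HL$. Since $r\HL$ is represented by the diagram of $\HL$ with every crossing reversed, it suffices in each case to produce a finite sequence of generalized Reidemeister moves and IH-moves carrying the crossing-reversed diagram back to the original; by Theorem \ref{thm:IH_move_handlebody_link} this is equivalent to an ambient isotopy. I would split the argument according to whether the handlebody link decomposes.

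The entries $4_1, 6_4, 6_5, 6_{11}, 6_{12}, 6_{13}, 6_{14}, 6_{15}$ all admit a minimal diagram with $2$-connectivity and were obtained in Section \ref{sec:completeness} as an iterated knot sum $G \# L_1 \# \cdots \# L_k$ of a spatial graph $G$ with Hopf links $L_i$, where $G$ is one of the trivial theta curve $\op{G0_1}$, $\op{G2_1}$, $\op{G4_1}$, or $\op{G4_5}$. Mirror image is compatible with the order-$2$ vertex connected sum, in the sense that $r(G \# L_1 \# \cdots \# L_k)$ is the corresponding knot sum of $rG, rL_1, \dots, rL_k$. The Hopf link is achiral, and each of the relevant spatial graphs is achiral: $\op{G0_1}$ and $\op{G2_1}$ by an evident symmetry of their diagrams, $\op{G4_1} = 2_1 \#_3 2_1$ because it is a connected sum of the achiral $\op{G2_1}$ with itself, and $\op{G4_5}$ by its symmetric diagram (cf.\ Moriuchi's tables \cite{Mor:07b}, \cite{Mor:09a}, \cite{Mor:09b}). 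Realizing the achirality of each summand by an isotopy supported near that summand, and composing, yields an isotopy from $r\HL$ to $\HL$. Note that here I only need the \emph{existence} of such a factorization together with achirality of the pieces, so the uniqueness result of Theorem \ref{teo:uniqueness_decomposition_special_case} is not required for this direction.

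The remaining entries $6_1, 6_2, 6_9$ do not decompose and must be treated directly: for each I would exhibit a minimal diagram carrying a symmetry — a rotation or reflection of the underlying plane graph — which, combined with reversing all crossings, reproduces the diagram; equivalently, I would display in a figure the explicit move sequence from the crossing-reversed diagram back to the original.

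The main obstacle is this last step. In the decomposable cases the achirality is essentially bookkeeping once the achirality of the spatial-graph summands is established; but for $6_1, 6_2, 6_9$ there is no factorization to exploit, so the symmetric diagrams or explicit move sequences have to be found by direct diagrammatic manipulation. I would also have to check, in the decomposable cases, that the isotopies realizing achirality of the individual summands respect the oriented trivial ball-arc pairs (and the incompressible disks) along which the knot sums are formed, so that they glue to a single global isotopy; verifying this compatibility at the gluing regions is the one place where the argument is more than formal.
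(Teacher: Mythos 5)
Your strategy is sound but genuinely different from the paper's for the decomposable entries, and it is incomplete in exactly the two places you yourself flag. The paper's proof of Lemma~\ref{lm:achiral} is a direct diagrammatic one: it asserts that the equivalences $\HL\simeq r\HL$ are easy to construct for all entries except $6_2$, and for $6_2$ it exhibits the explicit sequence of moves in Fig.~\ref{fig:chirality_6_2}. Your structural argument for $4_1, 6_4, 6_5, 6_{11}, 6_{12}, 6_{13}, 6_{14}, 6_{15}$ --- mirror image commutes with the order-$2$ vertex connected sum, the Hopf link is amphichiral as an unoriented link, and the graphs $\op{G0_1}, \op{G2_1}, \op{G4_1}, \op{G4_5}$ are achiral --- is a legitimate alternative that, if completed, would be more systematic than the paper's case-by-case inspection, and you are right that it needs only existence of the factorization, not Theorem~\ref{teo:uniqueness_decomposition_special_case}.

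Two things keep this from being a proof as written. First, the compatibility check at the gluing regions is not cosmetic: because $\#$ is a set-valued operation, $r(G\#L)$ with a given ball-arc pair equals $rG\#rL$ with the \emph{mirrored} pair, so you must produce an orientation-reversing self-homeomorphism of $(\sphere,G)$ that carries the chosen arc (with its orientation data) to an equivalent one. This matters concretely for $6_4$ versus $6_5$, which both arise from $\op{G4_1}$ by inequivalent placements of the Hopf ring; an achirality symmetry of $\op{G4_1}$ could a priori exchange the two placements and prove only $r6_4\simeq 6_5$, which is false since the two entries are inequivalent. The check does go through (e.g.\ for $\op{G2_1}$ one can take a symmetric model in which an orientation-reversing involution fixes the connecting arc pointwise), but it has to be done for each summand and each arc used. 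Second, for $6_1$, $6_2$, $6_9$ you defer to ``a symmetric diagram or an explicit move sequence'' without producing one; note that $6_2$ is precisely the entry the paper singles out as not obvious and for which it supplies Fig.~\ref{fig:chirality_6_2}, so this deferred step is the substantive content of the lemma for that case and cannot be omitted.
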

\begin{proof}
Except $6_2$, equivalences between these handlebody links and 
their mirror images are easy to construct; 
an equivalence between $6_2$ and r$6_2$
is depicted in Fig.\ \ref{fig:chirality_6_2} 
\begin{figure}[ht]
\def\svgwidth{0.9\columnwidth}
\begingroup%
  \makeatletter%
  \providecommand\color[2][]{%
    \errmessage{(Inkscape) Color is used for the text in Inkscape, but the package 'color.sty' is not loaded}%
    \renewcommand\color[2][]{}%
  }%
  \providecommand\transparent[1]{%
    \errmessage{(Inkscape) Transparency is used (non-zero) for the text in Inkscape, but the package 'transparent.sty' is not loaded}%
    \renewcommand\transparent[1]{}%
  }%
  \providecommand\rotatebox[2]{#2}%
  \newcommand*\fsize{\dimexpr\f@size pt\relax}%
  \newcommand*\lineheight[1]{\fontsize{\fsize}{#1\fsize}\selectfont}%
  \ifx\svgwidth\undefined%
    \setlength{\unitlength}{3118.11023622bp}%
    \ifx\svgscale\undefined%
      \relax%
    \else%
      \setlength{\unitlength}{\unitlength * \real{\svgscale}}%
    \fi%
  \else%
    \setlength{\unitlength}{\svgwidth}%
  \fi%
  \global\let\svgwidth\undefined%
  \global\let\svgscale\undefined%
  \makeatother%
  \begin{picture}(1,0.14545455)%
    \lineheight{1}%
    \setlength\tabcolsep{0pt}%
    \put(0,0){\includegraphics[width=\unitlength,page=1]{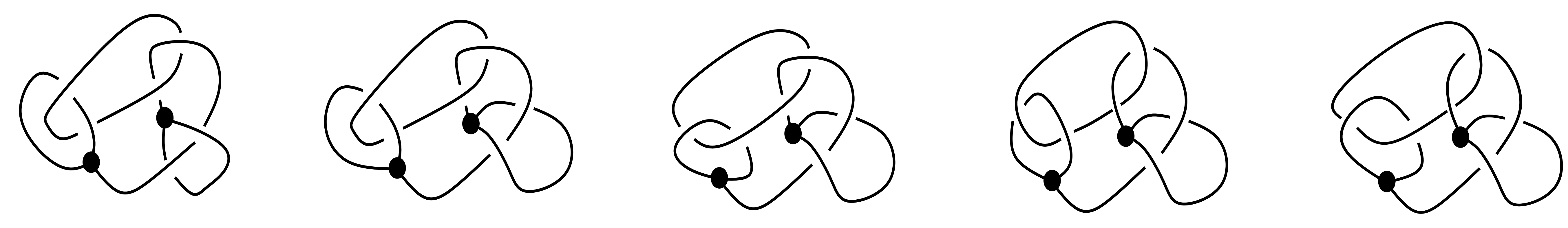}}%
    \put(0.15675102,0.06493506){\color[rgb]{0,0,0}\makebox(0,0)[lt]{\lineheight{1.25}\smash{\begin{tabular}[t]{l}$\simeq$\end{tabular}}}}%
    \put(0.37454724,0.06578114){\color[rgb]{0,0,0}\makebox(0,0)[lt]{\lineheight{1.25}\smash{\begin{tabular}[t]{l}$\simeq$\end{tabular}}}}%
    \put(0.5790146,0.067691){\color[rgb]{0,0,0}\makebox(0,0)[lt]{\lineheight{1.25}\smash{\begin{tabular}[t]{l}$\simeq$\end{tabular}}}}%
    \put(0.79010313,0.06769099){\color[rgb]{0,0,0}\makebox(0,0)[lt]{\lineheight{1.25}\smash{\begin{tabular}[t]{l}$\simeq$\end{tabular}}}}%
  \end{picture}%
\endgroup%
 
\caption{$6_2$ and its mirror image.}
\label{fig:chirality_6_2}
\end{figure}
\end{proof}
 
\begin{lemma}\label{lm:chiral}
$5_1$, $6_3$, $6_6$, $6_7$, $6_8$, $6_{10}$ in Table \ref{tab:handlebodylinks} are chiral.
\end{lemma}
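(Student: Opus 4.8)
The plan is to split the six entries into the five that are visibly built by order-$2$ connected sums and the single stubborn entry $6_3$. For the former I would deduce chirality from the uniqueness of the order-$2$ decomposition (Theorem~\ref{teo:uniqueness_decomposition_special_case}), transferring the \emph{known} chirality of a classical building block to the whole handlebody link; for $6_3$ I would argue directly with a mirror-sensitive invariant, since the Kitano--Suzuki invariants of Table~\ref{tab:uniqueness} are insensitive to reflection and so are useless here.

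For each of $5_1,6_6,6_7,6_8,6_{10}$, the completeness analysis (Lemma~\ref{lm:two_connected_IH_minimal}) already realizes it as an order-$2$ connected sum $(\HL_1,D_1)\#(\HL_2,D_2)$ in the sense of Definition~\ref{def:two_sum}, with one factor a spatial-graph piece from Table~\ref{tab:graphs} ($\op{G3_1}$ for $5_1$, $\op{G4_2}$ for $6_6$, $\op{G4_3}$ for $6_7$, $\op{G4_4}$ for $6_{10}$, and $\op{G2_1}$ summed with Solomon's link $\op{L4a1}$ for $6_8$). First I would check that every factor is itself indecomposable (the theta-curves, handcuff graphs, and $\op{L4a1}$ are prime), so that Theorem~\ref{teo:uniqueness_decomposition_special_case} applies and the decomposing annulus is unique up to an isotopy fixing the link. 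A reflection of $\sphere$ carries the decomposition of $\HL$ to the corresponding decomposition of $r\HL$ with each factor replaced by its mirror, so an equivalence $\HL\simeq r\HL$ would, by uniqueness, identify the two decomposing annuli and hence match the two unordered families of factors. The factor carrying the genus-$2$ handlebody (equivalently, the trivalent vertices of the spine) and the factor(s) consisting of unknotted rings are topologically distinct and cannot be interchanged, so the matching must be factorwise, forcing $(\HL_i,D_i)\simeq(r\HL_i,D_i)$ for each $i$. Since at least one factor is chiral --- a chiral theta-curve or handcuff graph in Moriuchi's tables \cite{Mor:09a},\cite{Mor:07b},\cite{Mor:09b}, and for $6_8$ the chiral link $\op{L4a1}$ --- this is a contradiction, whence the handlebody link is chiral.

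The entry $6_3$ is the main obstacle: it is the unique chiral entry that does \emph{not} appear in \eqref{list:handlebody_link_type_2}, i.e.\ it admits no minimal diagram of $2$-connectivity, so it is not visibly decomposable and Theorem~\ref{teo:uniqueness_decomposition_special_case} gives no leverage. My plan here is to detect chirality by a genuinely mirror-asymmetric invariant. One route is to isolate a distinguished constituent sublink of $6_3$ --- for instance the two-component link formed by the core of the solid-torus component together with a suitable cycle of a trivalent spine of the genus-$2$ component --- and to show this sublink is chiral via a reflection-sensitive invariant such as its Jones polynomial or signature, in a manner robust under the relabelling of components that a putative equivalence $6_3\simeq r6_3$ could induce. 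An alternative is a quandle cocycle invariant tailored to handlebody links, which is by construction not reflection-invariant.

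The delicate points I would need to pin down are, for the decomposable cases, the orientation bookkeeping of the disks under reflection (so that ``mirror of a knot sum is the knot sum of the mirrored factors'' holds exactly as stated), together with the primeness/indecomposability of each factor needed to invoke Theorem~\ref{teo:uniqueness_decomposition_special_case}; and, for $6_3$, verifying both that the chosen invariant is truly asymmetric under mirror image and that its value is invariant under the generalized Reidemeister and IH-moves as well as under any permutation of components, so that $6_3\not\simeq r6_3$ follows unconditionally. I expect the $6_3$ computation to be the hard part, precisely because it cannot be reduced to classical data through the decomposition machinery.
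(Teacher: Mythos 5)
Your overall architecture---uniqueness of the order-$2$ decomposition for the visibly decomposable entries, plus a genuinely mirror-sensitive invariant for $6_3$---is the paper's, and for $5_1$, $6_6$, $6_8$, $6_{10}$ your argument is essentially the one given there. One correction of route even in those cases: the chiral object you should extract from the unique factor $(\HL_i,D_i)$ is not the spatial graph from Moriuchi's tables but the constituent \emph{link} $\HL_i\setminus N(D_i)$ obtained by cutting along the disk (the trefoil for $5_1$, the $(2,4)$-torus link for $6_6$, $6_8$, $6_{10}$). A handlebody-link-disk pair does not remember a preferred spine (IH-moves change it; e.g.\ the thickened theta-curve $3_1$ is the \emph{trivial} genus-$2$ handlebody knot), so chirality of one particular spine does not descend to the pair, whereas the cut-open link is canonically attached to it; this is exactly why the paper phrases the step as ``torus links are chiral.''

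The two genuine gaps are $6_7$ and $6_3$. For $6_7$ your uniform argument breaks down: the constituent knot of the theta-curve factor is the figure-eight knot, which is amphichiral, and the other factor is the Hopf link, which is also amphichiral as an \emph{unoriented} link (reflect through the plane of one of two round circles realizing it), so no factor is a chiral classical building block. The paper must treat $6_7$ separately: after Theorem~\ref{teo:uniqueness_decomposition_special_case} reduces a putative orientation-reversing symmetry to one of the pair (figure-eight knot, tunnel arc $\alpha$), it passes to the minimal Seifert surface $S$ containing $\alpha$, notes that $S\setminus N(\alpha)$ is a Seifert surface of a Hopf link, and gets a contradiction from the linking numbers $\pm1$ of the two induced \emph{oriented} Hopf links. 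For $6_3$ you correctly observe that the $ks$ invariants of Table~\ref{tab:uniqueness} are blind to reflection, but your two substitutes are only sketched, and the ``constituent sublink'' route founders on exactly the issue you flag (the sublink depends on the spine). The paper's actual argument is a concrete mirror-sensitive refinement of the counting invariant: it compares the number of conjugacy classes of homomorphisms $\pi_1(\Compl{6_3})\to\mathsf{A}_5$ killing $m\cdot l$ with the number killing $m\cdot l^{-1}$, where $m,l$ are the meridian and \emph{preferred longitude} of the solid-torus component (the preferred longitude is where the ambient orientation enters); the counts are $77$ and $111$, so $6_3\not\simeq r6_3$. Without this computation, or a completed version of one of your alternatives, the $6_3$ case remains unproven.
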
 
\begin{proof}
Recall that, given a handlebody link $\HL$,
if $\HL$ and $r\HL$ are equivalent,
then there is an orientation-reversing self-homeomorphism
of $\sphere$ sending $\HL$ to $\HL$.

Observe that each of $5_1, 6_6, 6_8, 6_{10}$
admits an obvious decomposing annulus satisfying 
conditions in Theorem \ref{teo:uniqueness_decomposition_special_case};
particularly the annulus in each of them is unique. 
Their chirality then follows readily from 
the fact that torus links are chiral.

To see chirality of $6_3$, we observe that, given
a $(2,1)$-handlebody link $\HL$, any self-homeomorphism
of $\sphere$ preserving $\HL$ sends
the meridian $m$ and the preferred longitude $l$ 
of the circle component to $m^{\pm 1}$ and $l^{\pm 1}$,
respectively. In particular, any isomorphism on 
knot groups induced by such a homeomorphism
sends the conjugacy class of 
$m\cdot l $ in $\pi_1(\Compl{\HL})$ to  
the conjugacy class of  $m\cdot l$, $m^{-1}\cdot l$, 
$m\cdot l^{-1}$ or $m^{-1}\cdot l^{-1}$,
depending on whether the homeomorphism is orientation-preserving. 

Let $N$ be the number of conjugacy classes
of homomorphisms from $\pi_1(\Compl{\HL})$ to a finite group $\mathsf{G}$
that sends $m\cdot l$ (and hence $m^{-1} \cdot l^{-1} $)
 to $1$, and $rN$ 
the number of conjugacy classes
of homomorphisms from $\pi_1(\Compl{\HL})$ to $\mathsf{G}$
that sends $m\cdot l^{-1}$ (and hence $m^{-1}\cdot l$) to $1$.
Now, if $\HL$ and its mirror image $r\HL$ 
are equivalent, then $N=rN$. 
This is however not the case with $6_3$;
when $\mathsf{G}=A_5$, we have $(N,rN)=(77,111)$ as computed by \cite{appcontour}.

\begin{figure}[h]
\centering
\begin{subfigure}{.32\linewidth}
\captionsetup{font=footnotesize,labelfont=footnotesize}
\def\svgwidth{.8\columnwidth}
\begingroup%
  \makeatletter%
  \providecommand\color[2][]{%
    \errmessage{(Inkscape) Color is used for the text in Inkscape, but the package 'color.sty' is not loaded}%
    \renewcommand\color[2][]{}%
  }%
  \providecommand\transparent[1]{%
    \errmessage{(Inkscape) Transparency is used (non-zero) for the text in Inkscape, but the package 'transparent.sty' is not loaded}%
    \renewcommand\transparent[1]{}%
  }%
  \providecommand\rotatebox[2]{#2}%
  \newcommand*\fsize{\dimexpr\f@size pt\relax}%
  \newcommand*\lineheight[1]{\fontsize{\fsize}{#1\fsize}\selectfont}%
  \ifx\svgwidth\undefined%
    \setlength{\unitlength}{566.92913386bp}%
    \ifx\svgscale\undefined%
      \relax%
    \else%
      \setlength{\unitlength}{\unitlength * \real{\svgscale}}%
    \fi%
  \else%
    \setlength{\unitlength}{\svgwidth}%
  \fi%
  \global\let\svgwidth\undefined%
  \global\let\svgscale\undefined%
  \makeatother%
  \begin{picture}(1,0.9)%
    \lineheight{1}%
    \setlength\tabcolsep{0pt}%
    \put(0,0){\includegraphics[width=\unitlength,page=1]{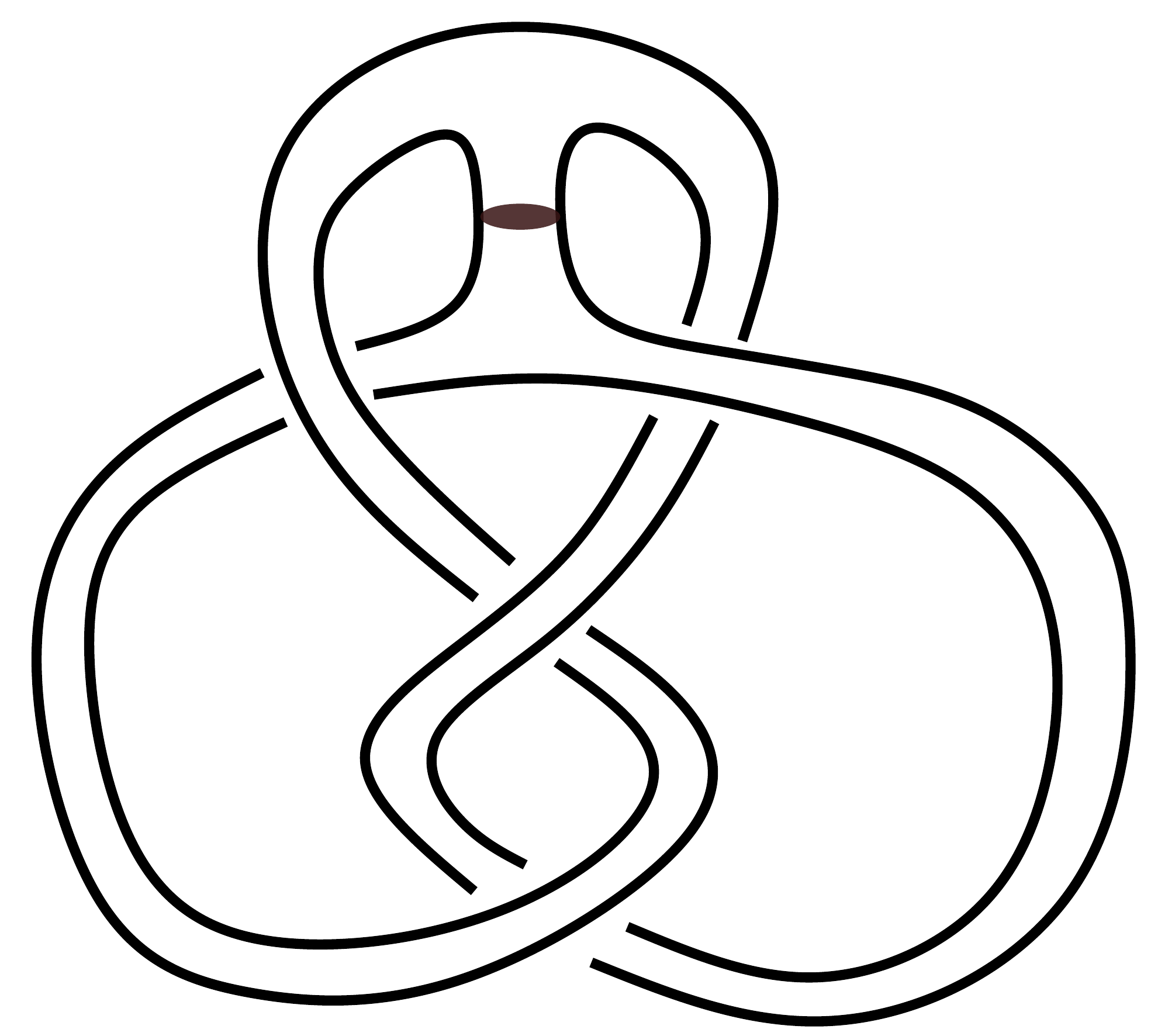}}%
    \put(0.33022681,0.69683137){\color[rgb]{0,0,0}\makebox(0,0)[lt]{\lineheight{1.25}\smash{\begin{tabular}[t]{l}{\tiny $D$}\end{tabular}}}}%
  \end{picture}%
\endgroup%
 
\caption{Pair $(T,D)$.}
\label{fig:handlebody-knot-disk}
\end{subfigure}
\begin{subfigure}{.32\linewidth}
\captionsetup{font=footnotesize,labelfont=footnotesize}
\def\svgwidth{.8\columnwidth}
\begingroup%
  \makeatletter%
  \providecommand\color[2][]{%
    \errmessage{(Inkscape) Color is used for the text in Inkscape, but the package 'color.sty' is not loaded}%
    \renewcommand\color[2][]{}%
  }%
  \providecommand\transparent[1]{%
    \errmessage{(Inkscape) Transparency is used (non-zero) for the text in Inkscape, but the package 'transparent.sty' is not loaded}%
    \renewcommand\transparent[1]{}%
  }%
  \providecommand\rotatebox[2]{#2}%
  \newcommand*\fsize{\dimexpr\f@size pt\relax}%
  \newcommand*\lineheight[1]{\fontsize{\fsize}{#1\fsize}\selectfont}%
  \ifx\svgwidth\undefined%
    \setlength{\unitlength}{566.92913386bp}%
    \ifx\svgscale\undefined%
      \relax%
    \else%
      \setlength{\unitlength}{\unitlength * \real{\svgscale}}%
    \fi%
  \else%
    \setlength{\unitlength}{\svgwidth}%
  \fi%
  \global\let\svgwidth\undefined%
  \global\let\svgscale\undefined%
  \makeatother%
  \begin{picture}(1,0.9)%
    \lineheight{1}%
    \setlength\tabcolsep{0pt}%
    \put(0,0){\includegraphics[width=\unitlength,page=1]{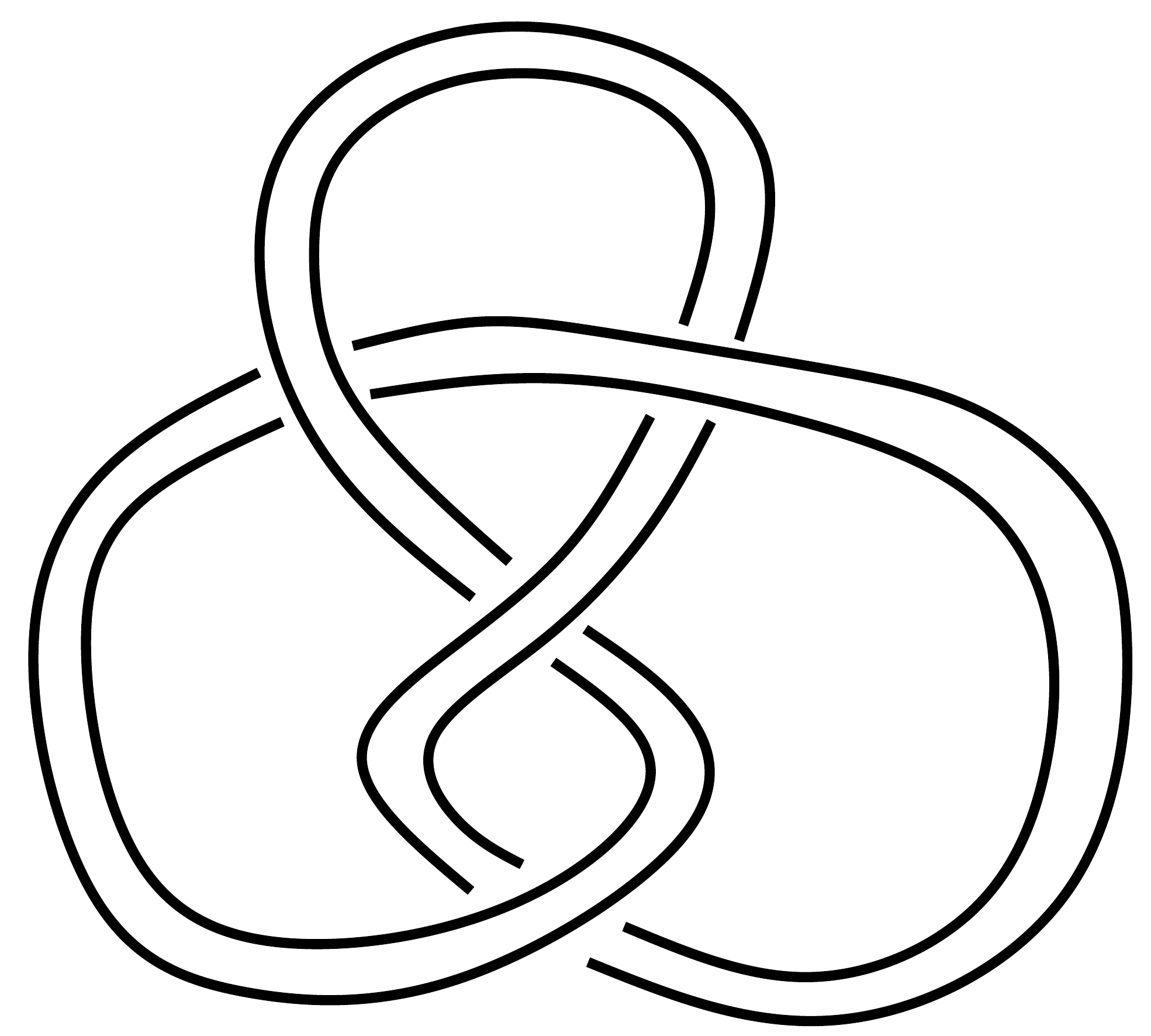}}%
    \put(0.45207966,0.68795992){\color[rgb]{0,0,0}\makebox(0,0)[lt]{\lineheight{1.25}\smash{\begin{tabular}[t]{l}{\footnotesize $\alpha$}\end{tabular}}}}%
    \put(0,0){\includegraphics[width=\unitlength,page=2]{solid-torus-knot-arc.pdf}}%
  \end{picture}%
\endgroup%
 
\caption{Dual pair $(\op{K4_1},\alpha)$.}
\label{fig:figure-eight-arc}
\end{subfigure}
\begin{subfigure}{.32\linewidth}
\captionsetup{font=footnotesize,labelfont=footnotesize}
\def\svgwidth{.8\columnwidth}
\begingroup%
  \makeatletter%
  \providecommand\color[2][]{%
    \errmessage{(Inkscape) Color is used for the text in Inkscape, but the package 'color.sty' is not loaded}%
    \renewcommand\color[2][]{}%
  }%
  \providecommand\transparent[1]{%
    \errmessage{(Inkscape) Transparency is used (non-zero) for the text in Inkscape, but the package 'transparent.sty' is not loaded}%
    \renewcommand\transparent[1]{}%
  }%
  \providecommand\rotatebox[2]{#2}%
  \newcommand*\fsize{\dimexpr\f@size pt\relax}%
  \newcommand*\lineheight[1]{\fontsize{\fsize}{#1\fsize}\selectfont}%
  \ifx\svgwidth\undefined%
    \setlength{\unitlength}{566.92913386bp}%
    \ifx\svgscale\undefined%
      \relax%
    \else%
      \setlength{\unitlength}{\unitlength * \real{\svgscale}}%
    \fi%
  \else%
    \setlength{\unitlength}{\svgwidth}%
  \fi%
  \global\let\svgwidth\undefined%
  \global\let\svgscale\undefined%
  \makeatother%
  \begin{picture}(1,0.9)%
    \lineheight{1}%
    \setlength\tabcolsep{0pt}%
    \put(0,0){\includegraphics[width=\unitlength,page=1]{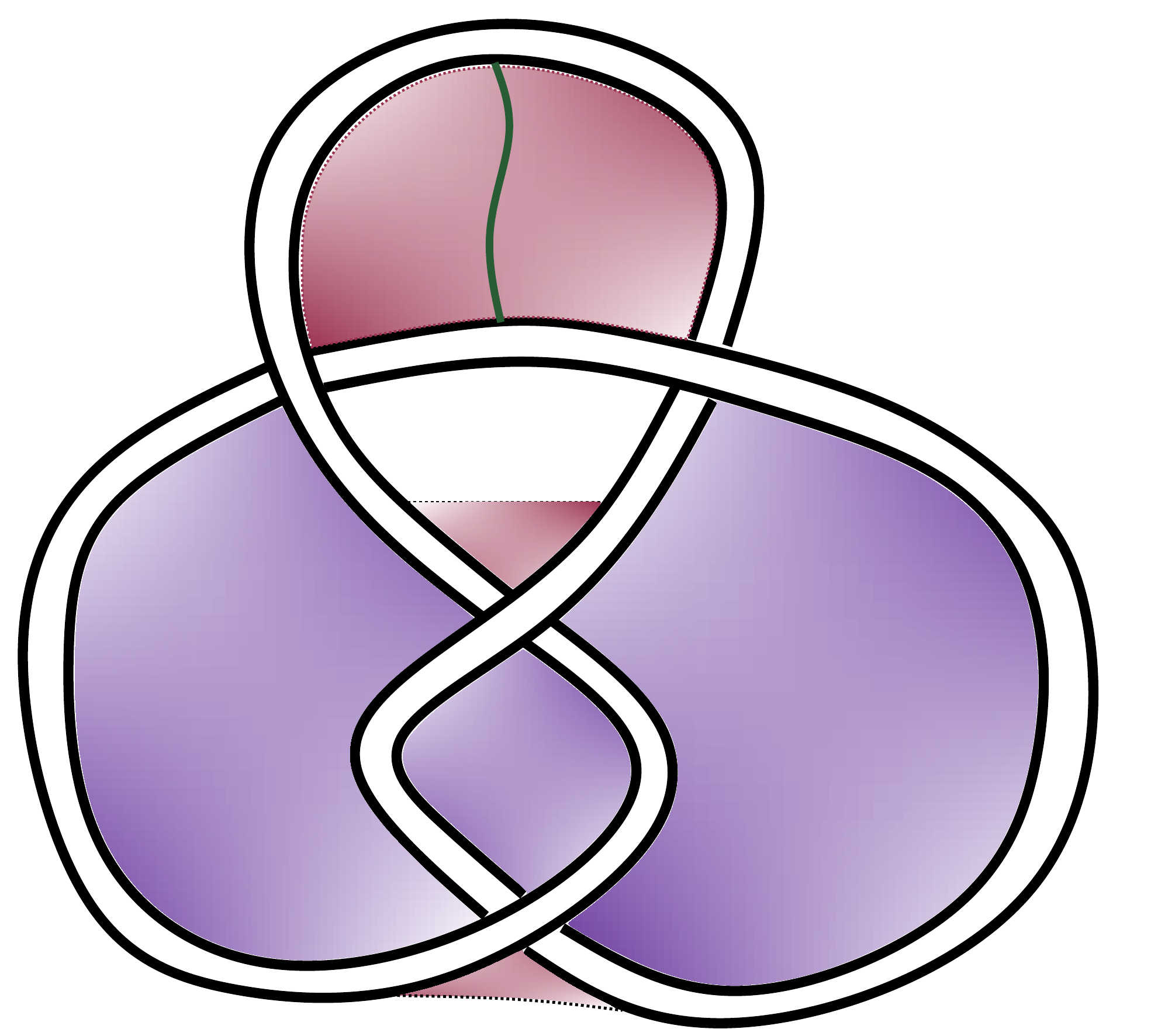}}%
    \put(0.71245232,0.29161377){\color[rgb]{0,0,0}\makebox(0,0)[lt]{\lineheight{1.25}\smash{\begin{tabular}[t]{l}$S$\end{tabular}}}}%
    \put(0.44142311,0.70673447){\color[rgb]{0,0,0}\makebox(0,0)[lt]{\lineheight{1.25}\smash{\begin{tabular}[t]{l}{\footnotesize $\alpha$}\end{tabular}}}}%
  \end{picture}%
\endgroup%
 
\caption{Seifert surface $S$. 
}
\label{fig:seifert_surface}
\end{subfigure}
\caption{•} 
\end{figure}
\begin{figure}[h]
\def\svgwidth{.5\columnwidth}
\begingroup%
  \makeatletter%
  \providecommand\color[2][]{%
    \errmessage{(Inkscape) Color is used for the text in Inkscape, but the package 'color.sty' is not loaded}%
    \renewcommand\color[2][]{}%
  }%
  \providecommand\transparent[1]{%
    \errmessage{(Inkscape) Transparency is used (non-zero) for the text in Inkscape, but the package 'transparent.sty' is not loaded}%
    \renewcommand\transparent[1]{}%
  }%
  \providecommand\rotatebox[2]{#2}%
  \newcommand*\fsize{\dimexpr\f@size pt\relax}%
  \newcommand*\lineheight[1]{\fontsize{\fsize}{#1\fsize}\selectfont}%
  \ifx\svgwidth\undefined%
    \setlength{\unitlength}{1417.32283465bp}%
    \ifx\svgscale\undefined%
      \relax%
    \else%
      \setlength{\unitlength}{\unitlength * \real{\svgscale}}%
    \fi%
  \else%
    \setlength{\unitlength}{\svgwidth}%
  \fi%
  \global\let\svgwidth\undefined%
  \global\let\svgscale\undefined%
  \makeatother%
  \begin{picture}(1,0.3)%
    \lineheight{1}%
    \setlength\tabcolsep{0pt}%
    \put(0,0){\includegraphics[width=\unitlength,page=1]{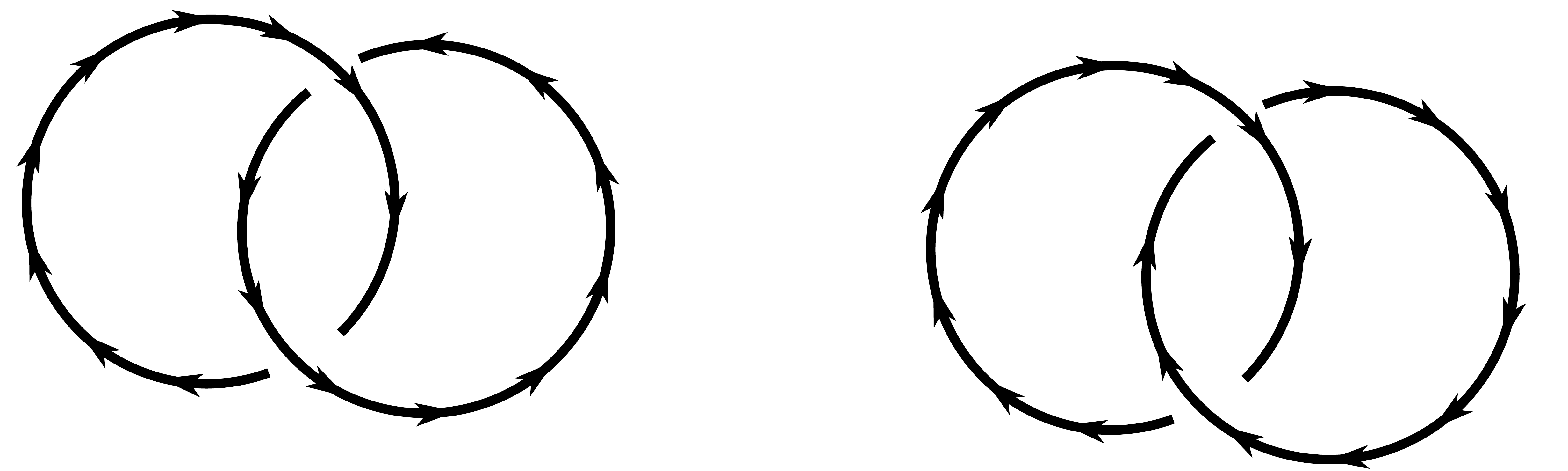}}%
    \put(-0.00015248,0.00373569){\color[rgb]{0,0,0}\makebox(0,0)[lt]{\lineheight{1.25}\smash{\begin{tabular}[t]{l}{\tiny link number $=-1$ }\end{tabular}}}}%
    \put(0.7104461,0.28224996){\color[rgb]{0,0,0}\makebox(0,0)[lt]{\lineheight{1.25}\smash{\begin{tabular}[t]{l}{\tiny link number $=1$}\end{tabular}}}}%
  \end{picture}%
\endgroup%
 
\caption{Hopf links with different orientations.}
\label{fig:two_hopf_links}
\end{figure} 
In the case of $6_7$
by Theorem \ref{teo:uniqueness_decomposition_special_case},
every self-homeomorphism $f$
of $\sphere$ sending $6_7$ to itself  
induces a self-homeomorphism $f$
sending the handlebody-knot-disk pair $(T,D)$ 
in Fig.\ \ref{fig:handlebody-knot-disk}
to itself, or alternatively the (fattened) 
figure eight with an arc
$(\op{K4_1},\alpha)$ in Fig.\ \ref{fig:figure-eight-arc} to itself, 
where $\alpha$ is the dual one-simplex to $D$.

Let $S$ be a minimal 
Seifert surface of the figure eight (Fig.\ \ref{fig:seifert_surface}) 
containing the arc $\alpha$.
Then it can be further assumed that $f(S)=S$, because
the complement of the tubular neighborhood $N(\alpha)$ of $\alpha$ in $S$
is a Seifert surface of a Hopf link, and up to ambient isotopy,
the Hopf link admits two minimal Seifert surfaces, and 
only one of them can give us $S$ after gluing $N(\alpha)$ back.
%

Since $f$ sends the complement $S\setminus N(\alpha)$
to $S\setminus f\big(N(\alpha)\big)$, 
if $f$ is orientation-reversing, then
the two oriented Hopf links in Fig.\ \ref{fig:two_hopf_links} 
are ambient isotopic,
but that is not possible by their linking numbers.
%
\nada{
Consequently $f$ induces a self-homeomorphism of $\sphere$, 
denoted still by $f$, 
preserves the spatial graph $\op{G4_3}$, a spine of $T$ 
or a figure eight with $\alpha$, to itself 
with the meridian $m:=\partial D$ (Fig.\ \ref{fig:chirality_6_7}) 
mapped to $m^{\pm 1}$.
Let $c_1, c_2$ be loops in $\op{G4_3}$
obtained by deleting the two arcs $\neq \alpha$ in $\op{G4_3}$,
respectively, 
and $l_1$ (resp.\ $l_2$) 
be loops on $T$ parallel to $c_1$ (resp.\ $c_2$) 
with $\op{lk}(l_1,c_i)=0$
(resp.\ $\op{lk}(l_1,c_i)=0$), $i=1,2$.
Then $f$ preserves $l_1,l_2$, in the sense that
$f(l_1)=l_i^{\pm 1}$, $i=1$ or $2$.

Suppose $f$ is orientation-reversing.
Then $f(l_1)\cdot f(m)$ is conjugate to 
either
$l_i^{-1}\cdot m$ or $l_i^{-1}\cdot m^{-1}$, $i=1$ or $2$.
Let $N$ denote the 
number of conjugate classes of 
homomorphisms from the fundamental group $\pi_1(\Compl{T})\simeq \mathbb{Z}\ast\mathbb{Z}$ to $\mathsf{A}_5$
with $l_1\cdot m$ sending to $1\in \mathsf{A}_5$,
and  $N_1,N_2,N_3, N_4$      
numbers of conjugate classes of 
homomorphisms from 
$\pi_1(\Compl{T})$ to $\mathsf{A}_5$
with $l_1\cdot m^{-1}$, $l_2\cdot m^{-1}$,
$l_1^{-1}\cdot m$ and $l_2^{-1}\cdot m$ 
sending to $1$, respectively.  
If $6_7$ and $r6_7$ are ambient isotopic,
then $N$ must be identical to one of $N_i$, $i=1,\dots 4$, 
but the computation using \cite{appcontour} shows otherwise:
\[N=6 \quad N_1=N_2=N_3=N_4=5.\]

\begin{figure}[ht]
\def\svgwidth{0.87\columnwidth}
\input{chirality_of_4_2.pdf_tex} 
\caption{$l_1,l_2$ and $m$.}
\label{fig:chirality_6_7}
\end{figure} 
}
\end{proof}

\section{Reducible handlebody links}\label{sec:reducible}
In this section, we show that 
Table \ref{tab:reducible} classifies, up to ambient isotopy
and mirror image, all non-split, reducible 
$(n,1)$-handlebody links up to six crossings (Theorem \ref{teo:reducible}). 
We begin by considering the order-$1$ connected sum
for handlebody links.

\subsection{Order-1 connected sum}
A handlebody-link-component pair $(\HL,h)$ is 
a handlebody link $\HL$ with a selected component $h$ of $\HL$.  
\nada{
admit a diagram 
with \emph{edge-connectivity} one, and
the disconnecting arc joins two subgraphs that 
contain exactly one $3$-valent node each, which implies that
such disconnecting arc is unique.\draftYYY{In which sense it is unique? Do
we need to use uniqueness later in the works?}
\draftMMM{I intended to say that cannot exist two such disconnecting arcs in the
same diagram, the text could be changed perhaps in:}
we can have at most one disconnecting arc.

Reducible handlebody links 
can be constructed 
by the connected sum operation, also called $1$-sum.
}
\begin{definition}[\textbf{Order-$1$ connected sum}]\label{def:one_sum}
Let $(\HL_1,h_1)$ and $(\HL_2,h_2)$ be two handlebody-link-component pairs.
Then their order-$1$ connected sum $(\HL_1,h_1)$\onesum$(\HL_2,h_2)$ 
is given by removing the interior of
a $3$-ball $B_1$ (resp.\ $B_2$) in $\sphere$ 
with $B_1\cap \partial \HL_1= B_1\cap \partial h_1$ 
(resp. $B_2\cap \partial \HL_2= B_2\cap \partial h_2$) 
a $2$-disk,  
and then gluing the resulting $3$-manifolds $\Compl{B_1}$, $\Compl{B_2}$
via an orientation-reversing homeomorphism $f:(\partial B_1,(\partial B_1)\cap h_1)\rightarrow 
(\partial B_2,(\partial B_2)\cap h_2)$.  
We use $\HL_1$\onesum$\HL_2$ to denote 
the set of order-$1$ connected sums 
between $\HL_1,\HL_2$ with all possible selected components.
\end{definition} 
\begin{figure}[ht]
\def\svgwidth{0.8\columnwidth}
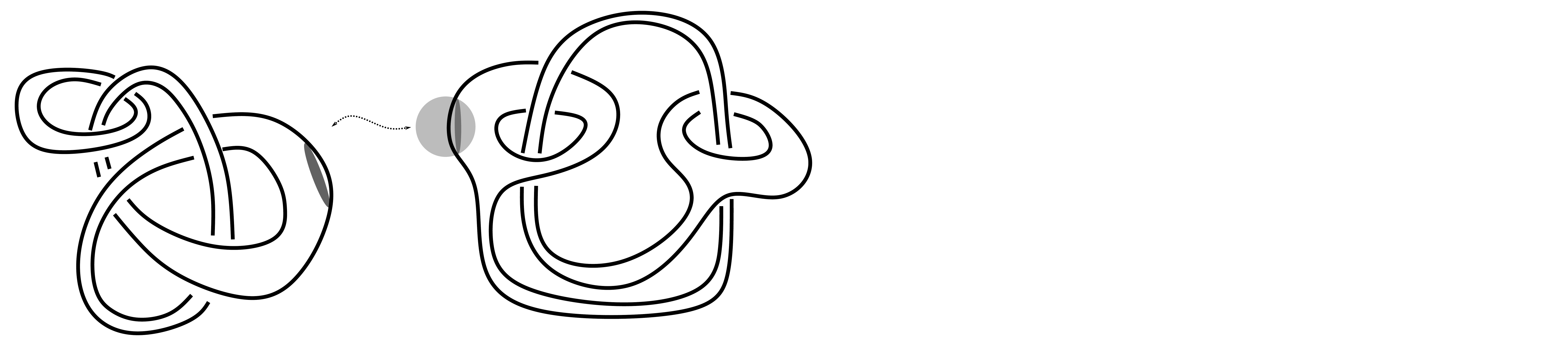   
\caption{Order-$1$ connected sum of $(\HL_1,h_1)$\onesum$(\HL_2,h_2)$.}
\label{fig:onesum}
\end{figure}
The following generalizes 
the case of handlebody knots in \cite[Theorem $2$]{Tsu:75}.
\begin{theorem}[\textbf{Uniqueness}]\label{teo:uniqueness_onesum}
Given a non-split, reducible $(n,1)$-handlebody link $\HL$,  
if $\HL\simeq (\HL_1,h_1)\text{\onesum} (\HL_2,h_2)$, and
$\HL\simeq (\HL_1',h_1')\text{\onesum} (\HL_2',h_2')$,
then $(\HL_i,h_i)\simeq (\HL_i',h_i')$, $i=1,2$, up to reordering. 
\end{theorem}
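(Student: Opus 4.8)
The plan is to reduce the statement to a uniqueness result for the decomposing sphere and then to run a standard intersection‑minimization argument. Each order‑$1$ connected sum decomposition of $\HL$ is recorded by a $2$‑sphere $\Stwo\subset\sphere$ with $\Stwo\cap\HL$ a single incompressible disk $\Delta$; since $\Stwo$ separates $\sphere$ into two balls, $\Delta$ is automatically a separating essential disk of the component $h$ it meets, and a separating essential disk exists only in a component of genus at least $2$. As $\HL$ is an $(n,1)$-handlebody link, $h$ must be the genus $2$ component $H$, and $\Delta$ cuts $H$ into two solid tori. The two factors $(\HL_i,h_i)$ are recovered from the two sides of $\Stwo$ by capping off, so it suffices to prove that the two decomposing spheres $\Stwo,\Stwo'$ arising from the two decompositions are ambient isotopic through an isotopy fixing $\HL$: such an isotopy carries the sides of $\Stwo$ to the sides of $\Stwo'$ and hence identifies the factors up to reordering. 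I note that, $\HL$ being non-split, its complement $\Compl{\HL}$ is irreducible, since any $2$-sphere disjoint from $\HL$ bounds a ball on the side not containing $\HL$.

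I would put $\Stwo,\Stwo'$ in general position and isotope them (rel $\HL$) so as first to minimize $\partial\Delta\cap\partial\Delta'$ on $\partial H$ and then to minimize the number of circles of $\Stwo\cap\Stwo'$. Assume for contradiction $\Stwo\cap\Stwo'\neq\emptyset$. A circle of intersection contained in $H$ lies in $\Delta\cap\Delta'$ and, being innermost in $\Delta'$, bounds a disk in $\Delta'$ together with a disk in $\Delta$; their union bounds a ball in the irreducible handlebody $H$, across which we isotope to reduce the intersection. A circle of intersection contained in $\Compl{\HL}$ lies in the complementary disks $\Delta^{c}:=\Stwo\setminus\mathring\Delta$ and ${\Delta'}^{c}$, and the same innermost-disk argument, now using irreducibility of $\Compl{\HL}$, removes it. Hence after minimization every component of $\Stwo\cap\Stwo'$ crosses $\partial H$, so $\Delta\cap\Delta'$ and $\Delta^{c}\cap{\Delta'}^{c}$ consist of arcs meeting $\partial H$ at the points of $\partial\Delta\cap\partial\Delta'$.

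The remaining, and most delicate, step is to eliminate these arcs. Choosing an outermost arc $a$ of $\Delta\cap\Delta'$ in $\Delta$ cuts off a disk $d\subset\Delta$ with $\partial d=a\cup c$ and $c\subset\partial H$, and boundary-compressing $\Delta'$ along $d$ produces two candidate disks. The main obstacle is to show that this operation can always be arranged to decrease the intersection while keeping a genuine decomposing disk, i.e.\ an essential separating disk of $H$: the two curves obtained by surgering $\partial\Delta'$ along $c$ must be analyzed on the genus $2$ surface $\partial H$, and the non-separating and the inessential outcomes must be ruled out by the minimality of $\partial\Delta\cap\partial\Delta'$ together with the fact that each side of $\Delta$ is a solid torus, which admits no essential separating disk. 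Carrying this out (and symmetrically for an outermost arc of $\Delta^{c}\cap{\Delta'}^{c}$) forces the arcs to disappear, contradicting the assumption, so $\Stwo$ and $\Stwo'$ may be taken disjoint.

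Finally, suppose $\Stwo\cap\Stwo'=\emptyset$. Then $\Delta'$ is a separating essential disk of $H$ disjoint from $\Delta$, hence contained in one of the two solid tori $V_1,V_2$ into which $\Delta$ cuts $H$; since an essential disk of a solid torus is a non-separating meridian disk, while $\Delta'$ is separating in $H$, the disk $\Delta'$ must be inessential in that solid torus, so $\partial\Delta'$ cobounds an annulus with $\partial\Delta$ in $\partial H$ and $\Delta'$ is parallel to $\Delta$ in $H$. Isotoping $\Delta'$ onto $\Delta$ makes the complementary disks $\Delta^{c},{\Delta'}^{c}$ share their boundary; they then cobound a ball in the irreducible complement $\Compl{\HL}$ and are isotopic rel boundary. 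Thus $\Stwo$ and $\Stwo'$ are ambient isotopic fixing $\HL$, and the induced homeomorphism matches the two sides, giving $(\HL_i,h_i)\simeq(\HL_i',h_i')$ up to reordering.
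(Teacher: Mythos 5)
Your setup is correct (the decomposing disk must be a separating essential disk of the genus~$2$ component, cutting it into two solid tori; the complement of a non-split $\HL$ is irreducible; circles of $\Stwo\cap\Stwo'$ can be removed by innermost-disk arguments), but the proof has a genuine gap at exactly the point you flag as ``the most delicate step.'' Boundary-compressing $\Delta'$ along an outermost disk $d\subset\Delta$ lying in, say, the solid torus $V_1$ produces two disks, and the standard band-sum argument only guarantees that \emph{one} of them is essential in $H$; that essential disk, being properly embedded in $V_1$, is either parallel to $\Delta$ or a \emph{meridian} of $V_1$, i.e.\ non-separating in $H$. You assert that the non-separating outcome ``must be ruled out by the minimality of $\partial\Delta\cap\partial\Delta'$,'' but no mechanism is given, and none is apparent: when $V_1$ is an unknotted, unlinked ring (equivalently, when one factor is the trivial solid torus), a boundary compression realizing its meridian is perfectly compatible with minimal position. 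This is not a routine omission — it is the entire content of the hard case.

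More structurally, your reduction proves too much. You reduce the theorem to the claim that the two decomposing spheres are ambient isotopic \emph{fixing $\HL$}, which is strictly stronger than equivalence of the factor pairs and is doubtful precisely in the trivial-solid-torus case (e.g.\ one can slide the trivial ring around the other handle and obtain a decomposing disk inducing the same factorization without any evident isotopy rel $\HL$ back to the original). The paper avoids this entirely: it works with the complementary disks in $\Compl{\HL}$, uses the free product decomposition $\pi_1(\Compl{\HL})\cong\mathsf{G}_1\ast\mathsf{G}_2$ to split into cases according to whether some $\mathsf{G}_i\cong\mathbb{Z}$, and in the case of a trivial solid torus factor it does \emph{not} compare the two decomposing disks directly; instead it introduces the disk $D_l$ bounded by the longitude of the trivial factor and runs a labelled-tree argument to show $D'$ can be isotoped off $D_l$, concluding only that both factorizations have a trivial solid torus factor and equivalent complementary factors. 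To repair your argument you would either have to prove the sphere-uniqueness statement (at least when neither factor is a trivial solid torus) and then supply a separate argument for the trivial-factor case, or abandon the isotopy-of-spheres reduction in favor of an argument that directly identifies the factors.
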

\begin{proof}
Note first that, since $\HL$ is non-split and reducible,
$\HL_i, \HL_i'$, $i=1,2$, are non-split, and  
$\pi_1(\Compl{\HL})$ is a non-trivial free product 
$\mathsf{G}_1\ast \mathsf{G}_2$,
where $\mathsf{G}_i$ is the knot group of $\HL_i$, $i=1,2$. 

Let $D$ and $D'$ be the separating disks
in $\Compl{\HL}$ given by
the factorizations 
$\HL\simeq (\HL_1,h_1)\text{\onesum} (\HL_2,h_2)$
and $\HL\simeq (\HL_1',h_1')\text{\onesum} (\HL_2',h_2')$,
respectively. 
Suppose neither $\mathsf{G}_1$ nor $\mathsf{G}_2$ 
is isomorphic to $\mathbb{Z}$.
Then, up to isotopy, $D'\cap D=\emptyset$ by the innermost circle/arc
argument.

Suppose one of $\mathsf{G}_i,i=1,2$, say $\mathsf{G}_1$, is
isomorphic to $\mathbb{Z}$, that is, 
$\HL_1$ is a trivial solid torus in $\sphere$. 
Then $\mathsf{G}_2$ must be non-cyclic, since $n>1$. 
Let $D_l$ be the disk bounded by the longitude of $\HL_1$, and 
isotopy $D,D_l$ 
such that the number $n$ (resp.\ $n_l$) 
of components of $D'\cap D$ (resp. $D'\cap D_l$)
is minimized. 

\noindent {\bf Claim: $n_l=0$.} 
Note first that the minimality implies that
$D'\cap D_l$ contains no circle components.
Now, consider a tubular neighborhood $N(D_l)$ of $D_l$ 
in $\Compl{\op{HL}}$ small enough 
such that $\overline{N(D_l)}\cap D=\emptyset$
and $\overline{N(D_l)}\cap D'$
are some disks, each of which intersects $D_l^+$ 
(resp.\ $D_l^-$) at exactly one arc on its boundary, where 
$D_l^{\pm}\subset \partial\overline{N(D_l)}$ are proper disks 
in $\Compl{\op{HL}}$ parallel to $D_l$.
The claim then follows once we have shown that $N(D_l)$ can 
be isotopied away from $D'$.

To see this, we construct a labeled tree $\Upsilon$ from
the complement of the intersection 
$D'\cap D_l^\pm$ in $D'$, where 
$D^\pm:=D_l^+\cup D_l^-$.
Regard each component of $D'\setminus 
\big(D'\cap D^\pm\big)$ as a node in $\Upsilon$, 
and each arc in $D' \cap D_l^\pm$ as
an edge in $\Upsilon$ connecting the two
nodes representing the components of  
$D'\setminus \big( D'\cap D^\pm\big)$
whose closures intersect at the arc.
Since each arc in $D'\cap  D^\pm$ 
cuts $D'$ into two, $\Upsilon$ is a tree.
The first two figures from the left in 
Fig.\ \ref{fig:asso_graph_cutting_disk}
illustrate the construction.
\begin{figure}[h]
\includegraphics[width=1.0\textwidth]{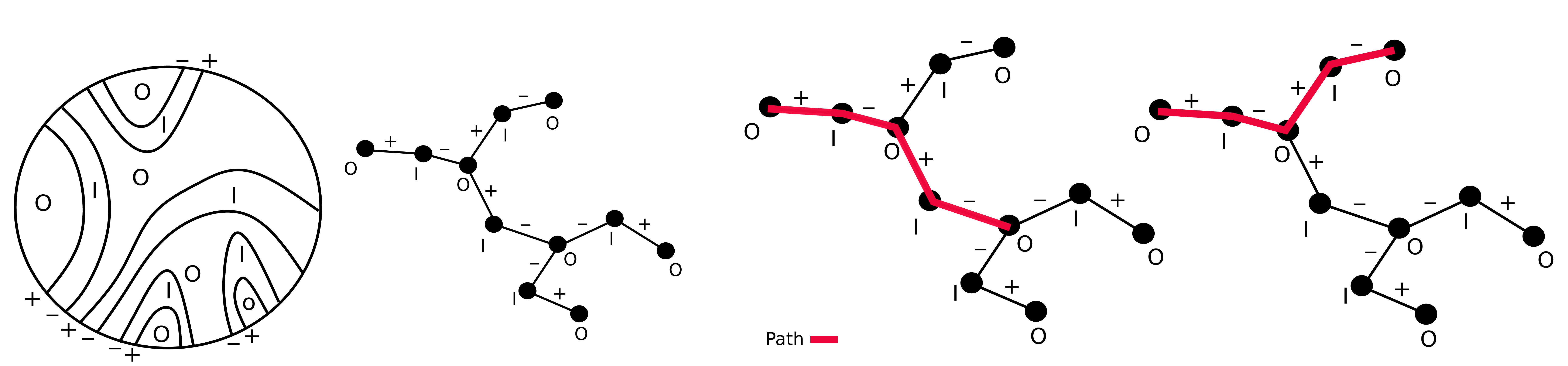} 
\caption{$h(D_l)$ and $\Upsilon$.}
\label{fig:asso_graph_cutting_disk}
\end{figure} 

We label nodes and edges of $\Upsilon$ as follows: 
A node is labeled with $I$ 
if the corresponding component of 
$D'\setminus \big(D'\cap D^\pm\big)$
is inside $N(D_l)$; otherwise the node is labeled with $O$. 
An edge of $\Upsilon$ is labeled with $+$
if the corresponding component of 
$D'\cap D^\pm_l$ is in $D_l^+$;
otherwise, it is labeled with $-$.

The labeling on $\Upsilon$ has the following
properties: (a) adjacent nodes have different labels;
(b) a node with label $I$ is bivalent, and the
two adjacent edges are labeled with $+$ and $-$,
respectively, whereas a node labeled with $O$
could be multi-valent; (c) a one-valent node
corresponds to an innermost arc in $D'$, and always has label $O$.

Consider a maximal path $\Gamma\subset \Upsilon$ 
starting from a one-valent node
and with the property that adjacent edges of $\Gamma$
have different labels. 
Then the other end point of the path must be labeled
with $O$ and it is either a one-valent node of $\Upsilon$
or a multi-valent node with all adjacent edges having the same label;
the two figures from the right in 
Fig.\ \ref{fig:asso_graph_cutting_disk} 
illustrate two possible maximal paths.

Without loss of generality, we may assume that
the adjacent edge of the starting one-valent node of $\Gamma$
is labeled with $+$. Denote the closure  
of the corresponding component of 
$ D' \setminus  \big(  D'\cap D^\pm\big)$
by $D_\Gamma^s$. Then $\partial D_\Gamma^s$ 
bounds a disk $T$ on $\partial (\HL \cup \overline{N(D_l)})$.
If $T\cap D_l^-=\emptyset$, then 
$D_\Gamma^s\cap D=\emptyset$ and hence $T\cap D=\emptyset$ by 
the minimality
of $n$; however, if it were the case, one could 
reduce $n_l$ 
by isotopying $D_l$ across the $3$-ball 
bounded by $D_\Gamma^s$ and $T$.
Hence $T$ must contain $D_l^-$.
Since the adjacent edge of the starting node is labeled with $+$, 
adjacent edges of the end node of $\Gamma$ in $\Upsilon$ 
are labeled with $-$. Denote by $D_\Gamma^e$
the closure of the component 
corresponding to the end node. Then $\partial D_\Gamma^e$
bounds a disk in  
$\partial (\HL \cup \overline{N(D_l)})$ 
that is contained in $T$ and 
has no intersection with $D_l^+$.
Particularly, $D_\Gamma^e\cap D=\emptyset$
by the minimality of $n$, and there is an arc in
$\partial D_\Gamma^e$
cutting a disk $D''$ 
off $\overline{T\setminus D_l^-}$ with 
$\mathring{D}''\cap D'=D''\cap D=\emptyset$,
so one can slide $N(D_l)$ over $D''$ (Fig.\ \ref{fig:sliding}) 
to decrease $n_l$, a contradiction.
\begin{figure}[h]
\def\svgwidth{0.8\columnwidth}
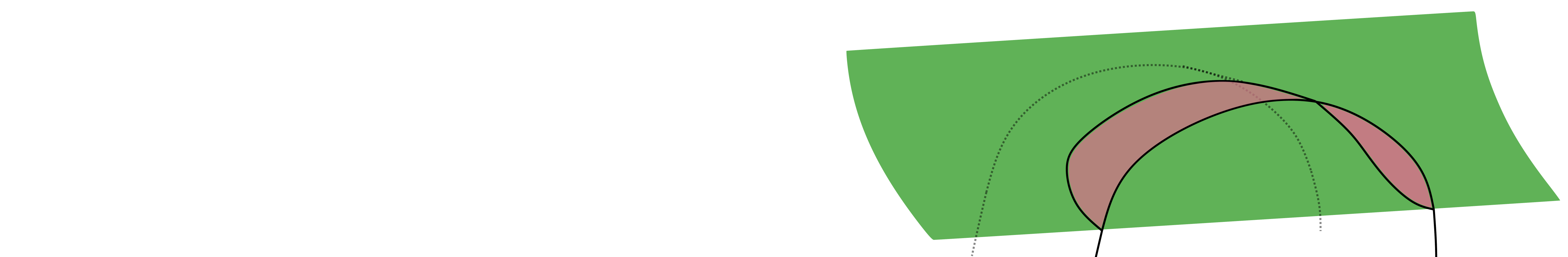   
\caption{}
\label{fig:sliding}
\end{figure}
Consequently, such a path $\Gamma$ cannot exist, but
this can happen only if $\Upsilon$ is empty.
The claim is thus proved. It implies that
$\HL_1,\HL_1'$ are trivial solid tori
in $\sphere$, and $\HL_2$, $\HL_2'$ are equivalent to
$\overline{N(D_l)}\cup \HL$.
\end{proof}

\nada{ 
\draftMMM{Actually, there is a problem here: there might exist an IH-equivalent diagram of a reducible link with
less crossings than the sum of the crossings of the minimal diagram of the klinks that we are ``summing'',
although I do not have a counterexample.  If this is the case, then Table \ref{tab:reducible} might not be complete.}

By enforcing Reidemeister moves I to V, minimality of the diagram among those diagrams with edge-connectivity
one implies that the disconnecting arc $a$ directly
connects the two $3$-valent nodes \draftGGG{this has already been said in the previous
page: uniformize?}.
This implies that removal of $a$ (and consequent removal of its endpoints and glueing of the incident arcs of the
endpoints)
leaves us with two disjoint diagrams of a standard knot or link with $c_1$ and $c_2$ crossings ($c = c_1 + c_2$ is the
minimal number of crossings of the handlebody link).
We can `slide' each of the two connecting nodes along the component where they lye,

In Table \ref{tab:reducible}, we enumerate all possible 
distinct reducible links obtained by performing $1$-sum on pair of
links with crossing numbers $c_1,c_2$ and $c_1+c_2\leq 6$.    
    
with total crossings $c$ by selecting
a component of each of the links with the caveat that we only count once components that can be `switched' by ambient
isotopies.

For example, the Hopf link L2a1 counts as one component as well as L4a1, L5a1, L6a1, L6a2, L6a3.
L6a4, L6a5.
L7a2 count as two components, as well as the knot sum of two Hopf links, a chain of three rings.

If both links are knots (one-component links), then we end up with a reducible handlebody knot, otherwise we get a handlebody
link.
 
Table \ref{tab:reducible} lists all reducible links with up to six crossings, up to mirror image, the first
entry in the notation "`first' + `second'" can be either a knot or a link, the second entry must be a link with
more than one component.
The number in parentheses indicates the total number of distinct reducible links for that number of
crossings.
}
%
\subsection{Non-split, reducible handlebody links}
\begin{table}[h!]
  \begin{center}
    \caption{Reducible links with up to six crossings.}
    \label{tab:reducible}
    \begin{tabular}{c|c|l|c} 
      \textbf{crossings} & \textbf{$c(L_1)$ + $c(L_2)$} & \textbf{description} & \textbf{$\vert L_1$\onesum $L_2\vert$}\\
      \hline
      2 (1) & 0 + 2 & unknot \onesum\ Hopf & 1\\
      \hline
      4 (4) & 0 + 4 & unknot \onesum\ L4a1 & 1\\
            &       & unknot \onesum\ Hopf\#Hopf & 2 \\
            & 2 + 2 & Hopf \onesum\ Hopf & 1 \\
      \hline
      5 (4) & 0 + 5 & unknot \onesum\ Whitehead  & 1 \\
            &       & unknot \onesum\ Trefoil\#Hopf & 2 \\
            & 3 + 2 & trefoil \onesum\ Hopf & 1 \\
      \hline
      6 (17) & 0 + 6 & unknot \onesum\ L6a$i$, $i=1,\dots,5$  & 1 \\
            &       & unknot \onesum\ L6n1  & 1 \\
            &       & unknot \onesum\ L4a1\#Hopf  & 3 \\
            &       & unknot \onesum\ (Hopf\#Hopf)\#Hopf  & 4 \\
            & 2 + 4 & Hopf \onesum\ L4a1  & 1 \\
            &       & Hopf \onesum\ Hopf\#Hopf  & 2 \\
            & 4 + 2 & K4a1 \onesum\ Hopf  &  1 \\
      \hline
    \end{tabular}
  \end{center}
\end{table}
Table \ref{tab:reducible} lists all non-split,
reducible $(n,1)$-handlebody links obtained by performing 
order-$1$ connected sum on pairs of
links $(L_1,L_2)$ 
with crossing numbers $(c_1,c_2)$ and $c_1+c_2\leq 6$.
Since $n>1$, one of $L_1$, $L_2$, say $L_2$, is
a link with more than one component.
The number in parentheses indicates 
the total number of inequivalent reducible handlebody links 
of the given crossing number.
By Theorem \ref{teo:uniqueness_onesum}, isotopy types of 
$L_1$ and $L_2$ with selected components   
determine the isotopy type of the resulting handlebody link
$L_1$\onesum$L_2$.
Thus there are no duplicates in Table \ref{tab:reducible}.

On the other hand, 
by Lemmas \ref{lm:three_connected_IH_minimal}
and \ref{lm:two_connected_IH_minimal}
and Theorem \ref{teo:irreducibility},  
minimal diagrams of non-split, reducible $(n,1)$-handlebody links 
up to $6$ crossings cannot have $k$-connectivity, $k>1$.  
This shows the completeness of Table \ref{tab:reducible}. 
%
 
In particular, every non-split, reducible $(n,1)$-handlebody link
in Table \ref{tab:reducible}
admits a minimal diagram with $1$-connectivity; thus we postulate
the following conjecture.
\begin{conjecture}\label{conj:decomp_is_of_type_1}
Every non-split, reducible handlebody link admits a minimal
diagram with $1$-connectivity.  
\end{conjecture}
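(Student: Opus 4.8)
The plan is to reduce the conjecture to the \emph{additivity of the crossing number} under the order-$1$ connected sum. Given a non-split, reducible handlebody link $\HL$, reducibility (Definition \ref{def:reducible}) yields a factorization $\HL \simeq (\HL_1,h_1)\text{\onesum}(\HL_2,h_2)$ (Definition \ref{def:one_sum}). Starting from minimal diagrams $D_1,D_2$ of $\HL_1,\HL_2$, one builds a diagram $D$ of $\HL$ by joining the selected components $h_1,h_2$ along the connecting arc prescribed by the gluing homeomorphism. Routing this arc in the plane between the two pieces introduces no crossing, so on the level of the underlying plane graph the connecting arc is a cut-edge: $D$ has $1$-connectivity, and $c(D)=c(D_1)+c(D_2)=c(\HL_1)+c(\HL_2)$. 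This already gives $c(\HL)\le c(\HL_1)+c(\HL_2)$, so the conjecture holds \emph{provided} the glued diagram $D$ is minimal, that is, provided
\[
c(\HL)=c(\HL_1)+c(\HL_2).
\]
Hence the conjecture follows from crossing-number additivity; moreover, for $(n,1)$-handlebody links one has the converse too, since by the uniqueness of the factorization (Theorem \ref{teo:uniqueness_onesum}) a minimal diagram of $1$-connectivity removes a cut-edge to exhibit $c(\HL)$ as the sum of the crossing numbers of the uniquely determined factors.

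For the additivity itself I would proceed in two stages. First, for \emph{adequate} (in particular alternating) handlebody links, I would adapt the Kauffman-bracket span argument of Kauffman--Murasugi--Thistlethwaite: the aim is a polynomial invariant of handlebody-link diagrams whose breadth is bounded above by a fixed multiple of $c(D)$, with equality on adequate diagrams, and which behaves multiplicatively under the order-$1$ connected sum. Because the connecting arc creates no new crossing and no new state-circle interaction, its breadth should add exactly, forcing $c(\HL)=c(\HL_1)+c(\HL_2)$ on this class. Second, for the general case I would attempt a geometric argument: by Theorem \ref{teo:uniqueness_onesum} the reducing sphere $\Stwo$ realizing the factorization is essentially unique, so in any minimal diagram one would try to isotope $\Stwo$ into a position ``vertical'' with respect to the projection, meeting the projection sphere in a single circle that crosses the diagram along just one arc, thereby producing a minimal diagram of $1$-connectivity directly.

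The main obstacle is this second, general, stage. Restricting all handlebody components to solid tori, the order-$1$ connected sum specializes to the connected sum of classical links, and the desired identity becomes exactly the additivity of the crossing number under connected sum of links---a well-known open problem, established only for special families (alternating, adequate, torus links). A complete proof of the conjecture would therefore demand either progress on that classical question or a genuinely new lower bound for the crossing number of handlebody links that is additive under gluing. Concretely, ruling out a hypothetical minimal diagram of connectivity greater than $1$ for a reducible link---equivalently, controlling the interaction between the reducing sphere and an arbitrary minimal projection---is the step I expect to be hardest, and it is the reason the statement is posed only as a conjecture.
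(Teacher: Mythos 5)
This statement is posed in the paper as a conjecture and carries no proof there; immediately after it the authors merely observe that, via Theorem \ref{teo:uniqueness_onesum}, it would imply the additivity of the crossing number under order-$1$ connected sum (Conjecture \ref{conj:crossings_additivity}), ``a reminiscence of a one-hundred years old problem in knot theory.'' Your proposal does not prove the statement either, and you say so explicitly; what you actually supply is the converse reduction (additivity $\Rightarrow$ the conjecture, by gluing minimal diagrams of the two factors along a crossing-free cut-edge), which, combined with the paper's remark, shows the two conjectures are essentially equivalent for non-split reducible $(n,1)$-handlebody links. That part of your analysis is sound and agrees with the paper's own framing, so there is no gap to report \emph{relative to the paper}: the paper proves nothing here, and your write-up is an accurate account of why the statement remains open.

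Two cautions about the programme you sketch, should you pursue it. First, the Kauffman--Murasugi--Thistlethwaite span argument does not transfer painlessly to this setting: a diagram of a handlebody link is well defined only up to IH-moves in addition to generalized Reidemeister moves (Theorem \ref{thm:IH_move_handlebody_link}), and the Kauffman bracket is not an IH-move invariant, so even your ``adequate/alternating'' stage requires constructing a genuinely new diagrammatic invariant rather than adapting the classical one. Second, your geometric stage --- isotoping the essentially unique reducing sphere into a position meeting an arbitrary minimal projection in a single arc --- is precisely the step that is open already for classical links (the case where all components are solid tori), so, as you correctly conclude, no complete proof is currently within reach by this route.
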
 
Not every minimal diagram
of a reducible handlebody link has $1$-connectivity.
By Theorem \ref{teo:uniqueness_onesum},
Conjecture \ref{conj:decomp_is_of_type_1} 
implies the additivity of the crossing number (Conjecture \ref{conj:crossings_additivity}), 
a reminiscence of a one-hundred years old problem 
in knot theory.
\begin{conjecture}\label{conj:crossings_additivity} 
If $(\HL_1,h_1)\text{\onesum} (\HL_2,h_2)$ is a $(n,1)$-handlebody link, then 
\begin{equation}\label{eq:c_plus_c}
c\big((\HL_1,h_1)\text{\onesum} (\HL_2,h_2)\big)=c(\HL_1)+c(\HL_2).
\end{equation} 
\end{conjecture}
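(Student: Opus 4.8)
The plan is to derive the additivity \eqref{eq:c_plus_c} from Conjecture \ref{conj:decomp_is_of_type_1}, using the uniqueness of the order-$1$ factorization established in Theorem \ref{teo:uniqueness_onesum}; this is precisely the implication announced in the paragraph preceding the statement. The inequality $\leq$ is the easy half and requires no conjecture: starting from minimal diagrams $D_1$ of $\HL_1$ and $D_2$ of $\HL_2$, I would place them in disjoint disks of $\Sbb^2$ and join the selected components $h_1,h_2$ by a single crossingless arc. This realizes $(\HL_1,h_1)\text{\onesum}(\HL_2,h_2)$ by a diagram with exactly $c(\HL_1)+c(\HL_2)$ crossings, whence $c\big((\HL_1,h_1)\text{\onesum}(\HL_2,h_2)\big)\leq c(\HL_1)+c(\HL_2)$.

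For the reverse inequality, write $\HL:=(\HL_1,h_1)\text{\onesum}(\HL_2,h_2)$, which is non-split and reducible. Assuming Conjecture \ref{conj:decomp_is_of_type_1}, $\HL$ admits a minimal diagram $D$ with $1$-connectivity; its underlying plane graph therefore has a single edge whose removal disconnects it, and a cutting circle meets $D$ transversally in one point on that edge. Cutting along this edge expresses $D$ as an order-$1$ connected sum of two sub-diagrams $D_1'$, $D_2'$, each representing a handlebody link $\HL_i'$ with a selected component $h_i'$. Since the connecting arc carries no crossing, the crossings of $D$ are partitioned, so $c(D)=c(D_1')+c(D_2')$ and $\HL\simeq(\HL_1',h_1')\text{\onesum}(\HL_2',h_2')$.

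Now I would invoke Theorem \ref{teo:uniqueness_onesum}: the two factorizations of $\HL$ must agree, so after reordering $(\HL_i',h_i')\simeq(\HL_i,h_i)$. Consequently each $D_i'$ is a diagram of $\HL_i$, giving $c(D_i')\geq c(\HL_i)$. Because $D$ is a minimal diagram of $\HL$, this yields
\[
c\big((\HL_1,h_1)\text{\onesum}(\HL_2,h_2)\big)=c(D)=c(D_1')+c(D_2')\geq c(\HL_1)+c(\HL_2),
\]
and combined with the first paragraph this establishes \eqref{eq:c_plus_c}.

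The main obstacle is that the whole argument rests on Conjecture \ref{conj:decomp_is_of_type_1}, which I do not expect to be able to prove in general. The delicate point is to guarantee that \emph{some} minimal diagram of the reducible sum actually displays its separating sphere as a single cutting edge. A priori a minimal diagram could realize the connected sum in a $2$- or $3$-connected configuration that conceals the decomposition and saves crossings; ruling this out is exactly the mechanism behind the still-open additivity of the crossing number for classical knots, to which this statement is explicitly a reminiscence. Absent Conjecture \ref{conj:decomp_is_of_type_1}, one could at best try to salvage the lower bound through an invariant that is additive under order-$1$ connected sum and that bounds the crossing number from below, but I would expect no such invariant to be sharp enough in general.
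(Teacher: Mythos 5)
The statement you were asked to prove is labelled a \emph{conjecture} in the paper, and the paper offers no proof of it: the only thing said is the one-line remark that, by Theorem \ref{teo:uniqueness_onesum}, Conjecture \ref{conj:decomp_is_of_type_1} implies Conjecture \ref{conj:crossings_additivity}. Your proposal is exactly a fleshed-out version of that implication --- the easy inequality $c\leq c(\HL_1)+c(\HL_2)$ by juxtaposing minimal diagrams, and the reverse inequality by taking the conjectured $1$-connected minimal diagram, splitting its crossings along the cut edge, and identifying the resulting factors with $(\HL_i,h_i)$ via the uniqueness theorem --- and you correctly flag that the whole argument is conditional on Conjecture \ref{conj:decomp_is_of_type_1}, which is open. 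So you have not proved the statement, but neither does the paper; you have accurately reconstructed, and made explicit, the precise logical dependence the authors assert, and your diagnosis of where the difficulty lies (a minimal diagram might realize the sum in a $2$- or $3$-connected configuration that hides the separating sphere) is the correct one. Two small points worth tightening if you were to write this up: the conjecture as stated does not assume $\HL$ non-split, whereas both Conjecture \ref{conj:decomp_is_of_type_1} and Theorem \ref{teo:uniqueness_onesum} do, so you should either add that hypothesis or dispose of the split case separately; and in the lower-bound step you should note that for an $(n,1)$-handlebody link both summands $\HL_1,\HL_2$ are necessarily classical links with the selected components of genus one, which is what guarantees the cut edge joins the two trivalent vertices and that neither factor is vacuous.
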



\section*{Acknowledgements}
The first author benefits from the support
of the GNA\-MPA (Gruppo Nazionale per l'Analisi Matematica, la Probabilit\`a
e le loro Applicazioni) of INdAM (Istituto Nazionale di Alta
Matematica). The second author
benefits from the support of the Swiss National Science Foundation Professorship grant PP00P2\_179110/1. 
The fourth author is supported by
National Center of Theoretical Sciences.

\appendix
\section{Output of the code}\label{sec:code}
\subsection{Minimal diagrams from the code}
The software code used in the paper exhaustively enumerates
$3$-edge-connected plane graphs with two trivalent vertices
and $q$ quadrivalent vertices, $0< q\leq 6$,
without double arcs that form a non-bigon.
Note that the trivial theta curve is the only $3$-edge-connected 
plane graph without quadrivalent vertices.   
The output of the code is examined and 
summarized in Table \ref{tab:numfromcode},
while the detailed list is available on 
\url{http://dmf.unicatt.it/paolini/handlebodylinks/},
where each plane graph is described by its adjacent matrix
together with a fixed ordering (clockwise or counterclockwise) 
of the edges adjacent to every vertex, as determined by the planar embedding. 
%
%
%
%
\subsubsection{Four crossings or less}
In Table \ref{tab:detailsfor1_4}, we analyze
the output of the code up to $4$ quadrivalent vertices,
where the column ``quad.\ v.'' lists the number of quadrivalent vertices
and ``ref.\ no.'' the reference number
of each plane graph in the output of the code. 
The column ``induced diagrams'' describes 
minimality of diagrams induced by each plane graph.
Most induced diagrams  
are not minimal, and we record those that are and their
isotopy types as special graphs or handlebody links, up to mirror image. 
Up to $4$-crossings, no IH-minimal diagram with more than one 
component is found.
%
%
%
\begin{table}[ht!]
  \begin{center}
    \caption{Diagrams with up to $4$ crossings.}
    \label{tab:detailsfor1_4}
    \begin{tabular}{|c|c|l|} 
    \hline
    \textbf{quad.\ v.}&\textbf{ref. no.} & \textbf{induced diagrams} \\
    \hline
    1 & none &  none \\
    \hline
    2 & \#1 & R-minimal; G$2_1$ in Table \ref{tab:graphs}; not IH-minimal\\
    \hline
    \multirow{2}{*}{3}& \#1, & not R-minimal \\
    \cline{2-3}
    & \#2,\#3 & R-minimal; G$3_2$ in Table \ref{tab:graphs}; not IH-minimal\\
    \hline
    \multirow{6}{*}{4}& \#1,\#2 & IH-minimal; G$4_1$ in Table \ref{tab:graphs}\\
    \cline{2-3}
    & \#3 & R-minimal; G$3_2$ in Table \ref{tab:graphs}; not IH-minimal\\
    \cline{2-3}
    & \#4, \#8 & R-minimal; G$4_2$ in Table \ref{tab:graphs}; not IH-minimal\\
    \cline{2-3}
    & \#5, \#6, \#7 & R-minimal; G$4_3$ in Table \ref{tab:graphs}; not IH-minimal\\ 
    \cline{2-3}
    & \#9 & R-minimal; G$4_4$ in Table \ref{tab:graphs}; not IH-minimal\\
    \cline{2-3}
    & \#10 & R-minimal; G$4_5$ in Table \ref{tab:graphs}; not IH-minimal\\
    \hline
    \end{tabular}
  \end{center}
\end{table}

\subsubsection{Five and six crossing cases}
\begin{table}[ht!]
  \centering
    \caption{Diagrams with $5$ crossings.}
    \label{tab:detailsfor5}
    \begin{tabular}{|c|l|} 
    \hline
    \textbf{ref.\ no.} & \textbf{description} \\
    \hline
    \#6, \#11, \#14 & not R-minimal \\
    \hline
    \#22, \#26, \#35 & not IH-minimal \\
    \hline
    \#36 & not IH-minimal \\
    \hline
    \#37 & not R-minimal \\
    \hline
    \end{tabular}
   
\end{table}
In the $5$ crossings case, the code finds
$8$ plane graphs with more than one components, out of a total of $37$ 
planar embeddings.
Table \ref{tab:detailsfor5} records the analysis for their induced diagrams; none of them gives IH-minimal diagrams.
In the $6$ crossing case, out of $181$ plane graphs, $37$ induces
diagrams with more than one components.
Table \ref{tab:detailsfor6} records the minimality of their induced diagrams. 
\begin{table}[ht!]
\centering   
    \caption{Diagrams with $6$ crossings.}
    \label{tab:detailsfor6}
    \begin{tabular}{|c|l|} 
    \hline
    \textbf{ref.\ no.} & \textbf{description} \\
    \hline
    \#5 & 
    \draftYYY{Reidemeister-equivalent to \#161}
    $6_1$ in Table \ref{tab:handlebodylinks} \\
    \hline
    \#15, \#22, \#34, \#45, \#54 & not R-minimal \\
    \hline
    \#56 & 
    \draftYYY{Reidemeister-equivalent to \#84}  
    $6_1$ in Table \ref{tab:handlebodylinks}\\
    \hline
    \#60 & $6_2$ in Table \ref{tab:handlebodylinks} \\
    \hline
    \#70 & $6_3$ in Table \ref{tab:handlebodylinks}  \\
    \hline
    \#73 & not R-minimal \\
    \hline
    \#83 & 
    \draftYYY{Reidemeister-equivalent to \#60} 
    $6_2$ in Table \ref{tab:handlebodylinks}\\
    \hline
    \#84 & $6_1$ in Table \ref{tab:handlebodylinks}\\
    \hline
    \#86, \#91, \#92, \#93 & not R-minimal \\
    \hline
    \#104, \#105, \#114, \#117, \#123 & not IH-minimal \\
    \hline
    \#134, \#135, \#137, \#144 & not IH-minimal \\
    \hline
    \#161, \#165 & 
    \draftYYY{IH-equivalent to \#84} 
    $6_1$ in Table \ref{tab:handlebodylinks}\\
    \hline
    \#168, \#169, \#170,\#171 & not IH-minimal \\
    \hline
    \#175 & 
    \draftYYY{IH equivalent to \#181} 
    $6_9$ in Table \ref{tab:handlebodylinks}\\
    \hline
    \#176 & not IH-minimal \\
    \hline
    \#177 & not R-minimal \\
    \hline
    \#179, \#180  & not IH-minimal \\
    \hline
    \#181 & $6_9$ in Table \ref{tab:handlebodylinks}\\
    \hline
    \end{tabular}
\end{table}



\draftMMM{I guess that this section can be completely omitted from the paper.
Or alternatively partially absorbed in the text somewhere else.}
\draftYYY{Let's keep it at least for the arXiv version}
Fig.\ \ref{fig:examples_from_code} exemplifies how the analysis
is done. Fig.\ \ref{fig:graph_2_6_n5} shows
how the diagrams induced by Plane Graph \#5 
are equivalent to
those by \#161 and \#165 in the case of 6 crossings,
and Fig.\ \ref{fig:reduction_6_n168} explains non-minimality
of diagrams induced by
Plane Graphs \#168, \#169, \#170, \#171. 
\begin{figure}[ht]
\centering
\begin{subfigure}{0.53\linewidth}
\includegraphics[width=0.95\textwidth]{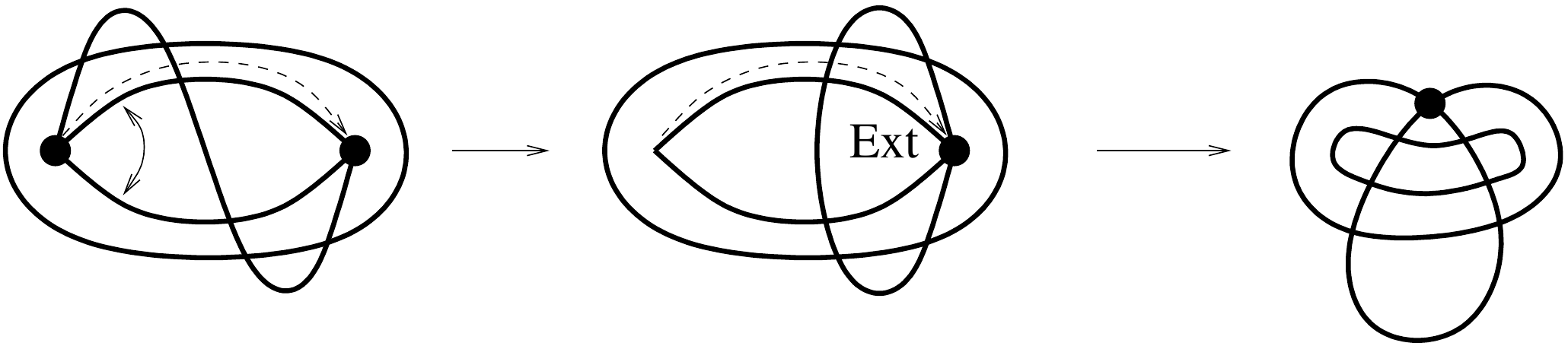}
\caption{Equivalent handlebody links from plane graphs.}
\label{fig:graph_2_6_n5}
\end{subfigure}
\begin{subfigure}{0.45\linewidth}
\includegraphics[width=0.95\textwidth]{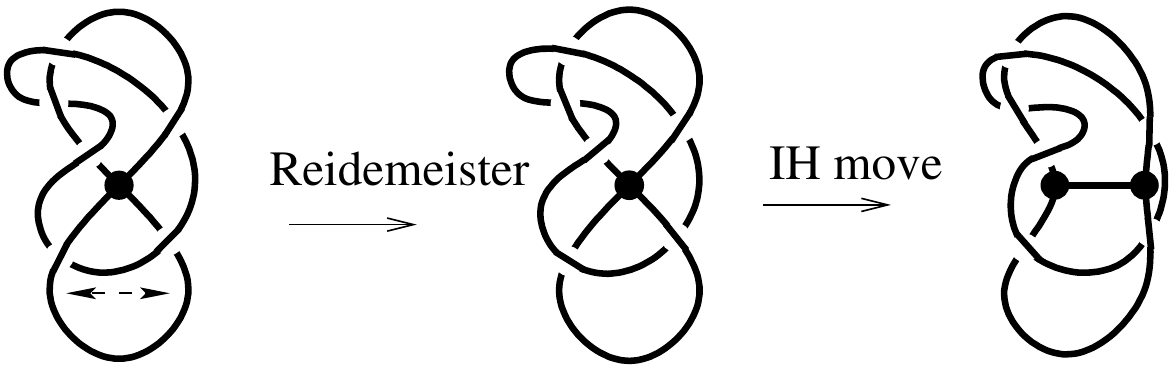}
\caption{Non-minimal diagrams.}
\label{fig:reduction_6_n168}
\end{subfigure}
\caption{•}
\label{fig:examples_from_code}
\end{figure}
\subsubsection{Inequivalent planar embeddings}
As a side remark, Fig.\ \ref{fig:planar_embeddings} illustrates 
two examples of abstract graphs with inequivant planar embeddings: one with five quadrivalent vertices 
and the other with six. 
Note that the abstract graphs 
have $2$-vertex-connectivity, consistent with
the Whitney uniqueness theorem \cite{Whi:32}.
\begin{figure}[ht]
\centering
\begin{subfigure}{0.4\linewidth}
 \def\svgwidth{.85\columnwidth}
\begingroup%
  \makeatletter%
  \providecommand\color[2][]{%
    \errmessage{(Inkscape) Color is used for the text in Inkscape, but the package 'color.sty' is not loaded}%
    \renewcommand\color[2][]{}%
  }%
  \providecommand\transparent[1]{%
    \errmessage{(Inkscape) Transparency is used (non-zero) for the text in Inkscape, but the package 'transparent.sty' is not loaded}%
    \renewcommand\transparent[1]{}%
  }%
  \providecommand\rotatebox[2]{#2}%
  \newcommand*\fsize{\dimexpr\f@size pt\relax}%
  \newcommand*\lineheight[1]{\fontsize{\fsize}{#1\fsize}\selectfont}%
  \ifx\svgwidth\undefined%
    \setlength{\unitlength}{850.39370079bp}%
    \ifx\svgscale\undefined%
      \relax%
    \else%
      \setlength{\unitlength}{\unitlength * \real{\svgscale}}%
    \fi%
  \else%
    \setlength{\unitlength}{\svgwidth}%
  \fi%
  \global\let\svgwidth\undefined%
  \global\let\svgscale\undefined%
  \makeatother%
  \begin{picture}(1,0.6)%
    \lineheight{1}%
    \setlength\tabcolsep{0pt}%
    \put(0,0){\includegraphics[width=\unitlength,page=1]{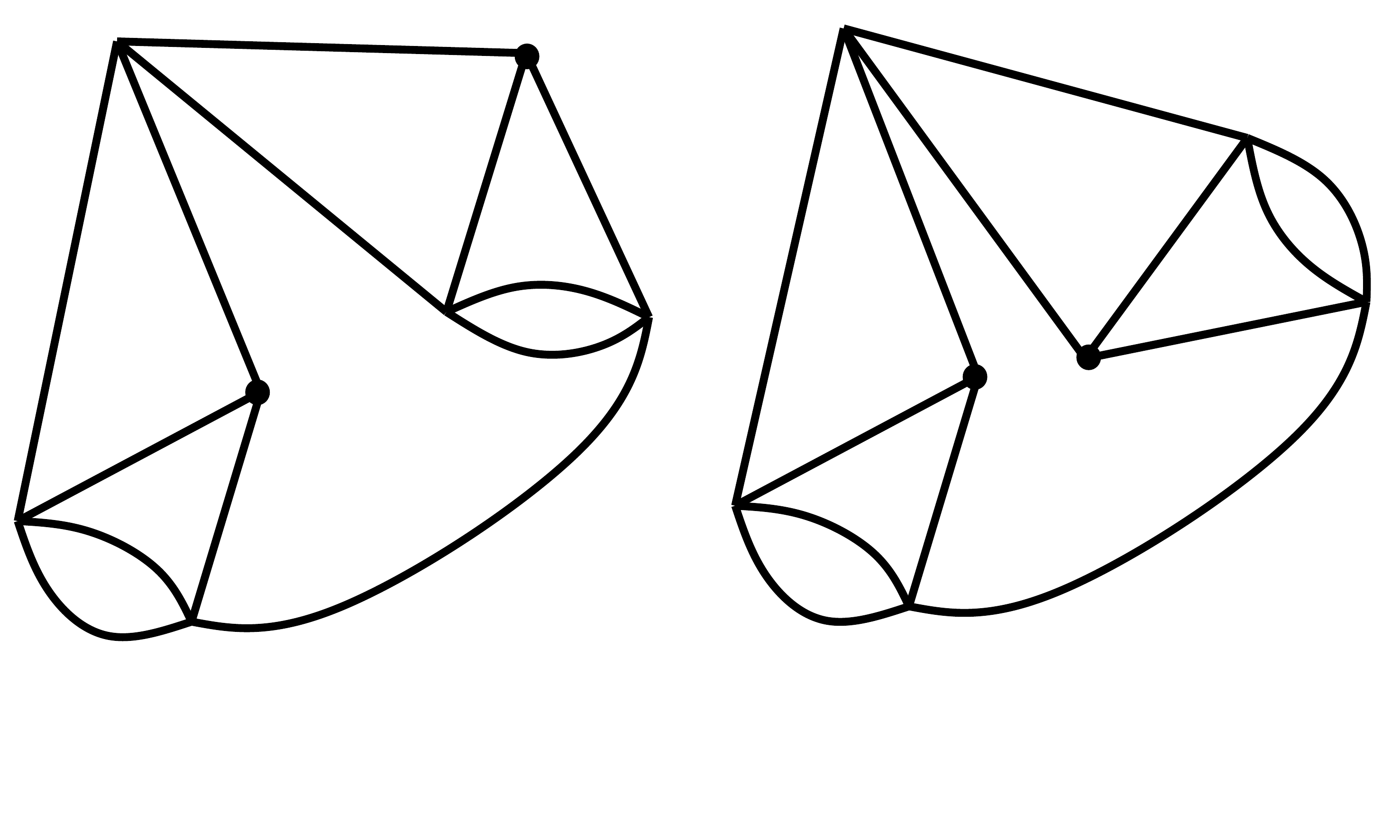}}%
    \put(0.09377383,0.04116903){\color[rgb]{0,0,0}\makebox(0,0)[lt]{\lineheight{1.25}\smash{\begin{tabular}[t]{l}$\# 14$\end{tabular}}}}%
    \put(0.66099111,0.05794159){\color[rgb]{0,0,0}\makebox(0,0)[lt]{\lineheight{1.25}\smash{\begin{tabular}[t]{l}$\# 15$\end{tabular}}}}%
  \end{picture}%
\endgroup%

\caption{Five-quadrivalent-vertex graph.}
\label{fig:planar_embeddings_5}
\end{subfigure}
\begin{subfigure}{0.5\linewidth}
\def\svgwidth{.9\columnwidth}
\begingroup%
  \makeatletter%
  \providecommand\color[2][]{%
    \errmessage{(Inkscape) Color is used for the text in Inkscape, but the package 'color.sty' is not loaded}%
    \renewcommand\color[2][]{}%
  }%
  \providecommand\transparent[1]{%
    \errmessage{(Inkscape) Transparency is used (non-zero) for the text in Inkscape, but the package 'transparent.sty' is not loaded}%
    \renewcommand\transparent[1]{}%
  }%
  \providecommand\rotatebox[2]{#2}%
  \newcommand*\fsize{\dimexpr\f@size pt\relax}%
  \newcommand*\lineheight[1]{\fontsize{\fsize}{#1\fsize}\selectfont}%
  \ifx\svgwidth\undefined%
    \setlength{\unitlength}{1048.81889764bp}%
    \ifx\svgscale\undefined%
      \relax%
    \else%
      \setlength{\unitlength}{\unitlength * \real{\svgscale}}%
    \fi%
  \else%
    \setlength{\unitlength}{\svgwidth}%
  \fi%
  \global\let\svgwidth\undefined%
  \global\let\svgscale\undefined%
  \makeatother%
  \begin{picture}(1,0.51351351)%
    \lineheight{1}%
    \setlength\tabcolsep{0pt}%
    \put(0,0){\includegraphics[width=\unitlength,page=1]{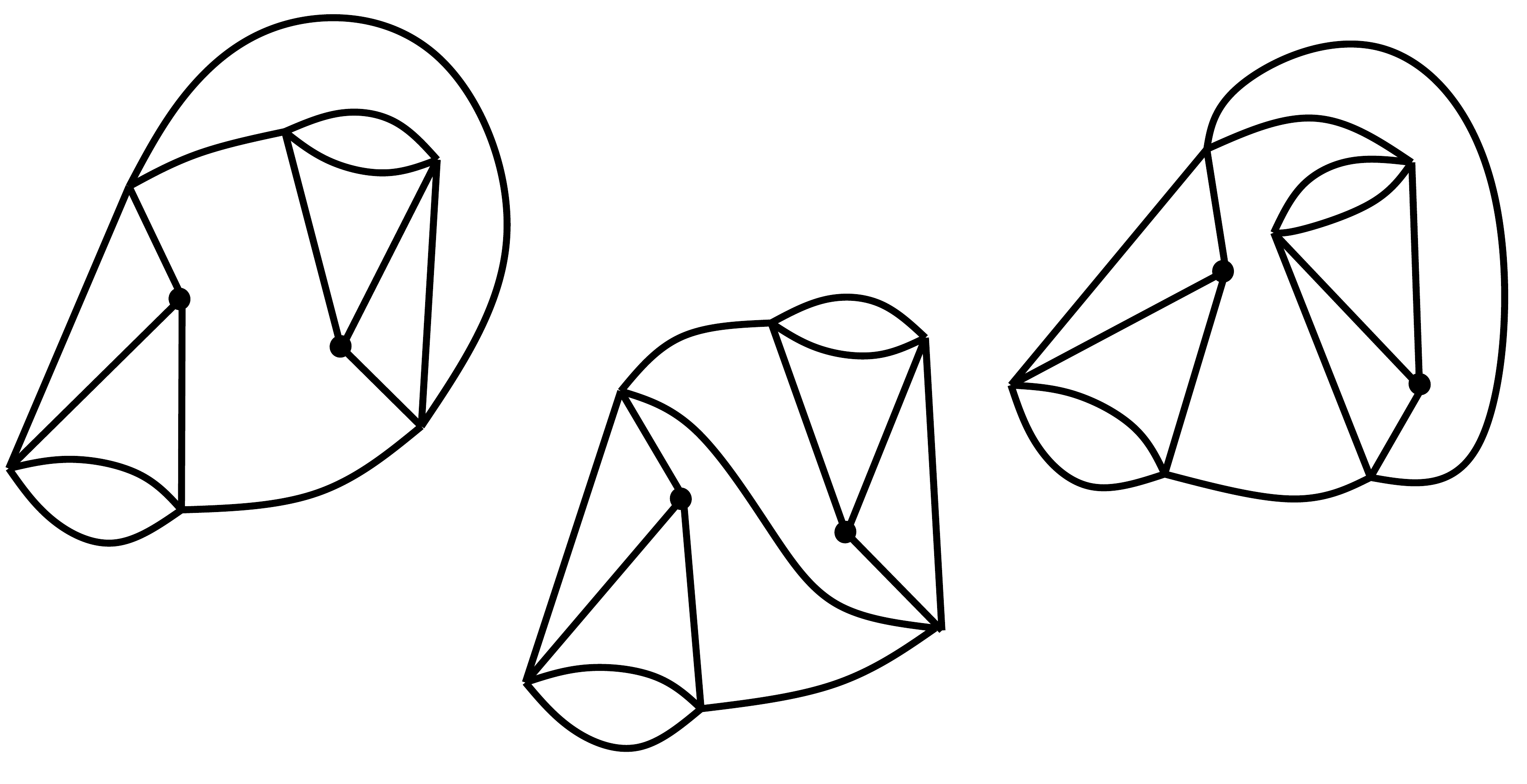}}%
    \put(0.08053221,0.06932777){\color[rgb]{0,0,0}\makebox(0,0)[lt]{\lineheight{1.25}\smash{\begin{tabular}[t]{l}$\#91$\end{tabular}}}}%
    \put(0.46918764,0.40336135){\color[rgb]{0,0,0}\makebox(0,0)[lt]{\lineheight{1.25}\smash{\begin{tabular}[t]{l}$\#90$\end{tabular}}}}%
    \put(0.81652667,0.10854344){\color[rgb]{0,0,0}\makebox(0,0)[lt]{\lineheight{1.25}\smash{\begin{tabular}[t]{l}$\#89$\end{tabular}}}}%
  \end{picture}%
\endgroup%
  
\caption{Six-quadrivalent-vertex graph.}
\label{fig:planar_embeddings_6}
\end{subfigure}
\caption{Inequivalent planar embeddings.}
\label{fig:planar_embeddings}
\end{figure}
 
 %

\nada{
the uniqueness theorem for embeddings 
of planar graphs does not apply
\draftMMM{regarding uniqueness of the embedding see:
\url{https://nforum.ncatlab.org/discussion/7955/whitneys-theorem-on-uniqueness-of-embeddings-of-3connected-graphs-in-the-plane/}
entry \# 9}
Indeed there actually exist abstract graphs satisfying all the requirements with non-unique embedding,
for example embeddings \#14 and \#15 with five crossings from the code are two distinct
embeddings (one of them corresponding to a link with two components, the other to a handlebody knot).
\draftMMM{Perhaps we should make a drawing of these.  In the tex source there is their combinatorial description.}
%
%
%
%
%
%
%
%
%
}
 
\nada{
Embedding \#56 is equivalent to \#84 (Figure \ref{fig:graph_2_6_n56}).

\begin{figure}
\caption{...}
\label{fig:graph_2_6_n56}
\end{figure}
}
 


\end{document}